

%

\documentclass{amsart}

\theoremstyle{definition}

\theoremstyle{remark}

\numberwithin{equation}{section}
\usepackage{graphicx}
\usepackage{latexsym}
\usepackage{subfigure}
\usepackage{amsmath}
\usepackage{amssymb}
\usepackage{amsfonts}
\usepackage{verbatim}
\usepackage{mathrsfs}
\usepackage[latin1]{inputenc}
\usepackage{color}
\usepackage[colorlinks,citecolor=blue,urlcolor=blue]{hyperref}
\usepackage{caption}

\usepackage{datetime}

\newtheorem{tm}{Theorem}[section]
\newtheorem{rk}{Remark}[section]

\newtheorem{prop}{Proposition}[section]
\newtheorem{lm}{Lemma}[section]
\newtheorem{cor}{Corollary}[section]
\newtheorem{ex}{Example}[section]
\usepackage{enumitem}

\newcommand{\N}{\mathbb N}
\newcommand{\R}{\mathbb R}

\newcommand{\Log}{\text{Log}}
\newcommand{\Arg}{\text{Arg}}

\newcommand{\<}{\langle}
\renewcommand{\>}{\rangle}
\numberwithin{figure}{section}

\newcommand{\cui}[1]{{\color{blue} [cui: #1]}}

\begin{document}

\title[Time Discretizations of Wasserstein-Hamiltonian Flows]
{Time discretizations of Wasserstein-Hamiltonian flows}

\author{Jianbo Cui}
\address{School of Mathematics, Georgia Tech, Atlanta, GA 30332, USA}
\curraddr{}
\email{jcui82@math.gatech.edu}
\thanks{}
\author{Luca Dieci}
\address{School of Mathematics, Georgia Tech, Atlanta, GA 30332, USA}
\curraddr{}
\email{dieci@math.gatech.edu}
\thanks{}
\author{Haomin Zhou}
\address{School of Mathematics, Georgia Tech, Atlanta, GA 30332, USA}
\curraddr{}
\email{hmzhou@math.gatech.edu}
\thanks{}

\subjclass[2010]{Primary 65P10, Secondary 35R02, 58B20, 65M12}

\keywords{ Wasserstein-Hamiltonian flow; Symplectic schemes; Optimal transport; Fisher information}


\dedicatory{}

\begin{abstract}
We study discretizations of Hamiltonian systems on the probability density manifold equipped with the $L^2$-Wasserstein metric.
Based on discrete optimal transport theory, several Hamiltonian systems on graph (lattice) with different 
weights are derived, which can be viewed as spatial discretizations to the original Hamiltonian systems. 
We prove the consistency and provide the approximate orders for those discretizations. By regularizing the system using Fisher information, we deduce
an explicit lower bound for the density function, which guarantees that symplectic schemes can be used to discretize in time. 
Moreover, we show desirable long time behavior of these schemes, and demonstrate their performance 
on several numerical examples. 
\end{abstract}


\maketitle

\section{Introduction}
\noindent
In recent years, there has been a lot of interest in studying
Hamiltonian systems defined on the probability space endowed with the $L^2$-Wasserstein metric, 
also known as Wasserstein manifold, and several authors have been concerned with
%
their connections to some well-known partial differential equations (PDEs); e.g., see \cite{AG08, GKP11, Vil03}. 

Our present study is influenced by the point of view in \cite{CLZ20}, where the authors showed 
that the push-forward density of a 
classical Hamiltonian vector field in phase space is a Hamiltonian flow on the Wasserstein manifold.
To be more precise, consider a Hamiltonian system subject to initial condition $(q_0, v_0)$: 
\begin{equation}\label{HamIVP}\begin{split}
dv &=-\frac {\partial H}{\partial q}(v,q),\; v(0)=v_0,\\
dq &=\frac {\partial H}{\partial v}(v,q),\; q(0)=q_0,
\end{split}\end{equation}
where the position $q \in \mathbb R^d $, the conjugate momenta $v \in \mathbb R^d$, and the real valued Hamiltonian 
$H\in \mathcal C^2(\mathbb R^d\times\mathbb R^d)$, $d\in \N^+$.  
Let $q(t)$, $v(t)$ denote the solution of \eqref{HamIVP}.
If we assume that the initial position $q_0$ is a random vector associated to a joint probability density $\rho_0$,
then the density $\rho$ of $q(t)$ satisfies
\begin{equation}\label{FP}\begin{split}
\partial_t \rho+\nabla \cdot (\frac {\partial H}{\partial v}\rho )&=0,\\
\partial_t v+\nabla v \cdot v + \nabla \cdot \frac {\partial H}{\partial q}&=0.
\end{split}\end{equation}
By introducing $v =\nabla S $, one can rewrite this system as the {\emph{Wasserstein-Hamiltonian}} system
\begin{equation}\label{WassHam1}\begin{split}
\partial_t \rho + \nabla \cdot (\frac {\partial H}{\partial v} \rho )&=0,\\
\partial_t S +\frac 12  |\nabla S|^2 + \frac {\partial H}{\partial q} &= C(t),
\end{split}\end{equation}
where $C(t)$ is a function depending only on $t$ and $|\nabla S|^2=\nabla S \cdot \nabla S$.

The formulation \eqref{WassHam1} is remarkably powerful and general.  Indeed, 
with different choices of the Hamiltonian $H$, the Wasserstein-Hamiltonian system \eqref{WassHam1}
leads to differential equations arising in many different applications. For example,
by taking $H(v,q) = \frac{1}{2} |v|^2$, one obtains the well-known geodesic equations between
two densities $\rho^0$ and $\rho^1$ on the Wasserstein manifold:
\begin{equation}\label{GeodEqn1}\begin{split}
\partial_t \rho + \nabla \cdot ({\rho }\nabla S)&=0, \\
\partial_t S +\frac 12 |\nabla S|^2 &=0,
\end{split}\end{equation}
with $\rho(0)=\rho^0, \rho(1)=\rho^1$.  In the seminal paper \cite{BB00},
it has been proven that the solution of \eqref{GeodEqn1} is a minimizer of the following variational problem,
commonly known as the {\emph{Benamou-Brenier formula}}:
\begin{equation}\label{min-e}\begin{split}
g_W(\rho_0,\rho_1)^2&=\inf_{v_t}\{\int_0^1\<v ,v \>_{\rho}dt\ : \, 
\partial_t \rho  + \nabla\cdot(\rho  v )=0, \rho_0=\rho^0, \rho_1=\rho^1\},
\end{split}\end{equation}
where $ \<v,v\>_{\rho}:=\int_{\mathbb R^d} |v|^2 \rho dx$.  As shown in \cite{BB00}, the optimal value $g_W(\rho^0, \rho^1)$ is the $L^2$-Wasserstein distance between $\rho^0$ and $\rho^1$.

%

Similarly, a problem known as the {\emph{Schr\"odinger Bridge Problem}} can be stated as 
\begin{equation}\label{SchrBr}
\inf_{v}\Big\{\int_0^1\frac 12 \<v ,v \>_{\rho}+\frac {\hbar^2}{8} I (\rho)dt\ :\,  \partial_t \rho  + \nabla\cdot(\rho  v )=0, \rho_0=\rho^0, \rho_1=\rho^1 \Big\},
\end{equation} 
where $\hbar>0$ and $I(\rho):=\<\nabla \log(\rho), \nabla \log(\rho)\>_{\rho}$ is the {\emph{Fisher information}}. 
The minimizer of \eqref{SchrBr} satisfies the Wasserstein-Hamiltonian system \eqref{WassHam1}
with the energy $\mathcal H(v,\rho)=\frac 12\int_{\mathbb R^d} |v|^2\rho dx-\frac {\hbar^2} 8 I(\rho)$ in density space.
Although the Schr\"odinger Bridge problem is nearly 100 years old, it has recently received attention in control theory
and machine learning, see \cite{Sch31,Leo14,Pav03}.
 
If we change the sign of the Fisher information term in \eqref{SchrBr}, we get 
\begin{equation}\label{Madelung1}
\inf_{v}\Big\{\int_0^1\frac 12\<v ,v \>_{\rho}-\frac {\hbar^2}{8} I (\rho)dt\ :\,  \partial_t \rho  + \nabla\cdot(\rho  v )=0, \rho_0=\rho^0, \rho_1=\rho^1 \Big\},
\end{equation} 
and this is the variational formula that Nelson used to derive the Schr\"odinger equation \cite{Nel66}.  
Its reformulation as Wasserstein-Hamiltonian system becomes the well known 
Madelung system \cite{Mad27}. 

\begin{rk}
The Benamou-Brenier formula \eqref{min-e} has been extensively used to study Wasserstein
gradient flows; e.g., see \cite{Laf88, Ott01, Vil03, Vil09}.
%
However, unlike the variational formulations from \eqref{min-e} that use 2-point boundary values, 
much less is known for Wasserstein-Hamiltonian flows, hence for solutions of \eqref{WassHam1} for given
initial values. The problem is subtle, for once because --depending on the initial condition-- the solution of
\eqref{WassHam1} may develop singularities. Moreover, there are several important properties of the
Wasserstein-Hamiltonian flow, such as preservation of symplectic structure and other quantities, 
which make the numerical approximation of Wasserstein-Hamiltonian flows quite challenging.
These considerations have motivated us to carry out the present numerical study. 
\end{rk}

To the best of our knowledge, prior to our work, there are no numerical analysis results
on the full  (i.e., space and time) discretization of Wasserstein-Hamiltonian systems.  The way we approach
this problem is by first using discrete optimal transport techniques to obtain  
Wasserstein-Hamiltonian systems on a graph, and view these as spatial discretizations of the original 
Wasserstein-Hamiltonian system.  We explicitly show the consistency of the semi-discretizations,  
and derive lower bounds for the probability density function on different graphs. 
Then, we combine ideas from discrete optimal transport and symplectic integration to 
construct fully discrete numerical schemes for the solution of the Wasserstein-Hamiltonian system. 

We would like to emphasize the crucial role of Fisher information in our study. Fisher information is widely 
used in many areas in statistics, physics and biology (see e.g. \cite{Fri04}). It appears naturally in some  
Wasserstein-Hamiltonian systems, such as \eqref{SchrBr}, and it has recently been
used as a regularization term in computations of optimal transport and Wasserstein gradient flows 
(see \cite{LYO18, LLW19} and references therein).
Our analysis in this paper indicates that there are clear benefits to using Fisher information
as a regularization term for the approximation of  Wasserstein-Hamiltonian flows: 
it leads to maintaining positivity of the density function, it is conducive to having schemes
that are time reversible and gauge invariant, that preserve mass and  
symplectic structure, and that almost preserve energy for very long times (of ${\mathcal{O}}(\tau^{-r})$,
where $r$ is the order of the numerical scheme and $\tau$ is the time step-size).

This paper is organized as follows. In Section \ref{sec-ode}, we introduce the  Wasserstein-Hamiltonian vector field
on graphs and study its properties. 
In Section \ref{sec-low}, we give an explicit lower bound of the probability density for the discrete  Wasserstein-Hamiltonian 
flow on different graphs; the proofs of the technical results in this Section are in the Appendix at the end of the paper.
Section \ref{sec-disc} is devoted to constructing and analyzing time discretizations, and in particular we develop and analyze
symplectic schemes.  To compare with the results we obtain using Fisher information as regularization device, in this Section
\ref{sec-disc} we also analyze regularized schemes obtained by adding a viscosity term.
%
Several numerical examples are given in Section \ref{sec-test}.

\section{Wasserstein-Hamiltonian Vector Field and Flow on a Finite Graph}\label{sec-ode}

Our goal in this Section is three-fold: to introduce a special vector field (the Wasserstein-Hamiltonian vector field) on
a graph, to recognize it as a consistent spatial discretization of the PDE \eqref{WassHam1}, and to show relevant properties 
of the associated flow.  The latter effort is a prelude to Section \ref{sec-disc} where also the time discretization
is examined.

\subsection{Wasserstein-Hamiltonian flows via discrete optimal transport}

Consider a graph $G=(V,E,\Omega)$ with a node set $V=\{a_i\}_{i=1}^N$, an edge set $E$, and $\omega_{jl}\in \Omega$
are the weights of the edges: 
$\omega_{jl}=\omega_{lj}>0$, if there is an edge between $a_j$ and $a_l$, and $0$ otherwise.
Below, we will write $(i,j)\in E$ to denote the edge in $E$ between the vertices $a_i$ and $a_j$. 
Finally, throughout this paper, we assume that $G$ is an undirected, strongly connected graph with 
no self loops or multiple edges.

Let us denote the set of discrete probabilities on the graph by ${\mathcal{P}}(G)$:
$$\mathcal P(G)=\{(\rho)_{j=1}^N\ :\, \sum_{j}\rho_j =1, \rho_j\ge 0,\; \text{for} \; j\in V\},$$ 
and let $\mathcal P_o(G)$ be its interior (i.e., all  $\rho_j> 0$, for $a_j\in V$).
Let $\mathbb V_j$ be a linear potential on each node $a_j$, and $\mathbb W_{jl}=\mathbb W_{lj}$ an 
interactive potential between nodes $a_j,a_l$.
We let $N(i)=\{a_j\in V: (i,j)\in E\}$ be the adjacency set of node $a_i$ and 
$\theta_{ij}(\rho)$ be the density dependent weight on the edge $(i,j)\in E$.

Now, let us define the discrete Lagrange functional on the graph by 
\begin{equation}\label{DiscLag}
\mathcal L (\rho,v)=\int_0^1 \bigl[
\frac 12\<v,v\>_{\theta(\rho)}-\mathcal V(\rho)-\mathcal W(\rho)-\beta I(\rho)\bigr] dt,
\end{equation}
where: $\rho(\cdot)\in \mathcal P_o(G)$,   the vector field $v$ is a skew-symmetric matrix on $E$. And the inner product of two vector fields  $u,v$ is defined by 
$$\<u,v\>_{\theta(\rho)}:=\frac 12\sum_{(j,l)\in E}u_{jl}v_{jl}\theta_{jl}.$$ 
The total linear potential $\mathcal V$ and interaction potential $\mathcal W$ are given by
$$\mathcal V(\rho)=\sum_{i=1}^N\mathbb V_i\rho_i,\,\,
\mathcal W(\rho)=\frac 12\sum_{i,j}\mathbb W_{ij}\rho_i\rho_j.$$ 
The parameter $\beta\ge 0$, and
the {\emph{discrete Fisher information}} is defined by 
\begin{equation}\label{DiscFisher}
I(\rho)=\frac 12\sum_{i=1}^N\sum_{j\in N(i)}\widetilde \omega_{ij}|\log(\rho_i)-\log(\rho_j)|^2\widetilde \theta_{ij}(\rho)
\end{equation}
\begin{rk}
Note that in \eqref{DiscFisher}, we are allowing use of edge weights $\widetilde \omega$ and
probability weights $\widetilde \theta$, different from $\omega$ and $\theta$; this added flexibility
may be exploited to obtain more robust space discretizations than those obtained when 
choosing $\widetilde \omega = \omega$ and $\widetilde \theta=\theta$, as done in \cite{CLZ19b}.
\end{rk}

The overall goal is to find the minimizer of $\mathcal L(\rho,v)$ subject to the constraint
\begin{align*}
\frac {d\rho_i}{d t}+div_G^{\theta}(\rho v)=0,
\end{align*}
where the discrete divergence of the flux function $\rho v$  is defined as 
$$div_G^{\theta}(\rho v):=-(\sum_{l\in N(j)}\sqrt{\omega_{jl}}v_{jl}\theta_{jl}).$$
As shown in \cite{CLZ19b}, the critical point $(\rho, v)$ of $\mathcal L$ satisfies 
$v=\nabla_G S:=\sqrt{\omega_{jl}}(S_j-S_l)_{(j,l)\in E}$ for some function $S$ on $V$.  As a consequence,
the minimization problem leads to 
the following discrete  Wasserstein-Hamiltonian vector field on the graph $G$:
\begin{equation}\label{dhs}\begin{split}
&\frac {d\rho_i}{d t}+\sum_{j\in N(i)}\omega_{ij}(S_j-S_i)\theta_{ij}(\rho)=0,\\
&\frac {d S_i}{dt}+\frac 12\sum_{j\in N(i)}\omega_{ij}(S_i-S_j)^2 \frac {\partial \theta_{ij}(\rho)}{\partial \rho_i}+\beta \frac {\partial I(\rho)}{\partial \rho_i}+\mathbb V_i+\sum_{j=1}^N\mathbb W_{ij}\rho_j=0.
\end{split}\end{equation}
With respect to the variables $\rho$ and $S$, we can rewrite \eqref{dhs} as a Hamiltonian system with
Hamiltonian function 
$\mathcal H(\rho,S)=K(S,\rho)+\mathcal F(\rho),$ where 
$ K(S,\rho):=
\frac 12 \<\nabla_G S, \nabla_G S\>_{\theta(\rho)}$ and $\mathcal F(\rho):=\beta I(\rho)+\mathcal V(\rho)+\mathcal W(\rho).$ 
In particular, if $\beta=0,$ $\mathcal V=0,$ and $\mathcal W=0$, the infimum of 
$\mathcal L(\rho,v)$ induces the Wasserstein metric on the graph, which is a discrete version of  Benamou-Brenier formula:
\begin{align*}
W(\rho^0,\rho^1):=\inf_{v}\Big\{\sqrt{\int_{0}^1\<v,v\>_{\theta(\rho)}}dt \,\ : \,
\frac{d\rho}{dt}+div_G^{\theta}(\rho v)=0, \; \rho(0)=\rho^0,\; \rho(1)=\rho^1\Big\}.
\end{align*} 

The following example illustrates the importance of adding Fisher information in order to
regularize the discrete Hamiltonian, so to avoid development of singularities when solving the
initial value problem \eqref{dhs}.

\begin{ex}\label{ex-1}
Consider a 2-point graph $G$. Let $\rho_1(0), \rho_2(0)>0$ and $S_1(0), S_2(0)$ be the corresponding 
initial values on the two nodes, 
take the weights to be constant (e.g., take them to be $1$)
and let $\mathcal F$ be some other assigned potential on the nodes.
By choosing $\theta_{12}=\theta_{21}=\frac {\rho_1+\rho_2}2,$ \eqref{dhs} becomes 
\begin{equation}\label{fds}\begin{split}
\dot \rho_1&= - (S_2-S_1)\frac{\rho_1+\rho_2}2,\,\ 
\dot \rho_2= - (S_1-S_2)\frac {\rho_1+\rho_2}2,\\
\dot S_1&=-\frac 14|S_2-S_1|^2-\frac {\delta \mathcal F} {\delta \rho_1},\,\ 
\dot S_2=-\frac 14 |S_1-S_2|^2-\frac {\delta \mathcal F} {\delta \rho_2}.
\end{split}\end{equation}
Combining the above equations and using $\rho_1+\rho_2=1$, we get 
\begin{align*}
\frac {\partial (\rho_1-\rho_2)}{\partial t}&=- (S_2-S_1) \\
\frac {\partial (S_1-S_2)}{\partial t}&=\frac {\delta \mathcal F} {\delta \rho_2}-\frac {\delta \mathcal F} {\delta \rho_1}.
\end{align*}
Now, we claim that if $\mathcal F$ has no singularity on the boundary of $\mathcal P(G)$, then
positivity of $\rho_1, \rho_2$ may fail. For example, taking 
$\mathcal F(\rho_1,\rho_2)=\frac 12\rho_1^2+\frac 12\rho_2^2$, we get 
$\rho_1(t)-\rho_2(t)=(\rho_1(0)-\rho_2(0))\cos(t) +(S_1(0)-S_2(0))\sin(t)$. 
Then, we  obtain 
\begin{align*}
\rho_1(t)&=\frac 12+ \frac 12\cos(t)(\rho_1(0)-\rho_2(0))+
\frac 12\sin(t)(S_1(0)-S_2(0)),\\
\rho_2(t)&=\frac 12+ \frac 12\cos(t)(\rho_2(0)-\rho_1(0))+
\frac 12\sin(t)(S_2(0)-S_1(0)).
\end{align*}
It is clear that one of the density value can be a negative number if $|S_1(0)-S_2(0)|>1$.
When taking $S_1(0)=S_2(0)$, the solution can be given in the following cases, 
\begin{align*}
&\rho_1(t)=\rho_2(t)=\frac 12,\; \text{if}\; \rho_1(0)=\rho_2(0),\\
&\rho_1(t)>0, \rho_2(t)>0,\; \text{if} \;|\rho_1(0)-\rho_2(0)|<1,\\
&\rho_1(n\pi)=0,\; \text{or} \; \rho_2(n\pi)=0,\; \text{if} \; |\rho_1(0)-\rho_2(0)|=1.
\qed
\end{align*}
\end{ex}

Let us denote with $T^*$
the first time for which $\lim_{t\to T^*}\rho_i(t)\le 0$ or $\lim_{t\to T^*}S_i(t)=\infty$ for some index $i$.
Following arguments similar to those in \cite{CLZ19b}, we have the following result.
\begin{prop}\label{well-dhs}
Consider \eqref{dhs} and assume that $\beta\ge 0$. Then, for any $\rho^0\in \mathcal P_o(G)$ and any
function $S^0$ on $V$, there exists a unique solution of \eqref{dhs} and it satisfies the following properties
(i)-(vi).
\begin{enumerate}[label=(\roman*)]
\item Mass is conserved: before time $T^*$,
$$\sum_{i=1}^N\rho_i(t)= \sum_{i=1}^N\rho_i^0=1.$$
\item Energy is conserved: before time $T^*$,$$\mathcal H(\rho(t),S(t))=\mathcal H(\rho^0,S^0).$$
\item The solution is time reversible:  if $(\rho(t), S(t))$ is the solution of \eqref{dhs}, then 
$(\rho(-t), -S(-t))$ also solves \eqref{dhs}.
\item It is time transverse invariant with respect to the linear potential: 
if $\mathbb V^{\mathbb \alpha}=\mathbb V-\mathbb \alpha$, then $S^{\mathbb \alpha}=S+\mathbb \alpha t$ is 
the solution of \eqref{dhs} with potential $\mathbb V^{\mathbb \alpha}$.
\item A time invariant $\rho^*\in \mathcal P_o(G)$ and $S^*(t)=-v t$ form an interior stationary solution of \eqref{dhs} 
if and only if $\rho^*$ is the critical point of $\min_{\rho\in \mathcal P_o(G)}\mathcal H(\rho,S)$ and 
$v=\mathcal H(\rho^*)+\frac 12 \sum_{i=1}^N\sum_{j=1}^N \mathbb W_{ij}\rho^*_i\rho^*_j$. 
\item Assuming that $\beta>0$ and $\widetilde \theta_{ij}(\rho)=0$ only if $\rho_i=\rho_j=0$, 
then there exists a compact set $B\subset \mathcal P_o(G)$ such that $\rho(t)\in \mathcal P_o(G)$ for all $t>0$.
\end{enumerate}
\end{prop}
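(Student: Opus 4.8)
The plan is to settle local existence and uniqueness first and then dispatch (i)--(v) by direct manipulation, reserving the real work for the positivity statement (vi). On the open set $\mathcal P_o(G)\times\R^N$ the right-hand side of \eqref{dhs} is $C^1$, since the weights $\theta_{ij}(\rho),\widetilde\theta_{ij}(\rho)$ and the Fisher gradient $\partial I/\partial\rho_i$ are smooth while every $\rho_i>0$; the Cauchy--Lipschitz theorem then produces a unique maximal solution issued from $(\rho^0,S^0)$, defined up to the exit time $T^*$. For (i), I would sum the first line of \eqref{dhs} over $i$ and relabel $i\leftrightarrow j$: using $\omega_{ij}=\omega_{ji}$ and $\theta_{ij}=\theta_{ji}$ the double sum is antisymmetric, hence zero, so $\frac{d}{dt}\sum_i\rho_i=0$. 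For (ii), I would check that the two lines of \eqref{dhs} are exactly Hamilton's equations $\dot\rho_i=\partial_{S_i}\mathcal H$, $\dot S_i=-\partial_{\rho_i}\mathcal H$ for $\mathcal H=K+\mathcal F$, whence $\frac{d}{dt}\mathcal H=\sum_i(\partial_{\rho_i}\mathcal H\,\dot\rho_i+\partial_{S_i}\mathcal H\,\dot S_i)=0$.

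Properties (iii) and (iv) follow by substitution. For reversibility, put $\widetilde\rho(t)=\rho(-t)$ and $\widetilde S(t)=-S(-t)$: in the first line the factor $(S_j-S_i)$ flips sign while $\theta_{ij}$ is unchanged, in the second line $(S_i-S_j)^2$ is invariant, and the extra signs produced by differentiating at $-t$ make both lines hold. For time-transverse invariance, inserting $S^{\alpha}=S+\alpha t$ and $\mathbb V^{\alpha}=\mathbb V-\alpha$ leaves the first line unchanged (the differences $S^{\alpha}_j-S^{\alpha}_i$ are unchanged) while in the second line the $+\alpha$ from $\dot S^{\alpha}$ cancels the $-\alpha$ in $\mathbb V^{\alpha}$. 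For (v), when $S^*_i\equiv -vt$ the kinetic term $K$ and its $\rho$-gradient vanish, so the first line is automatic and the second collapses to $\partial_{\rho_i}\mathcal F(\rho^*)=v$ for every $i$---precisely the Lagrange condition for $\rho^*$ to be a critical point of $\mathcal F=\mathcal H(\cdot,S^*)$ on the simplex $\{\sum_i\rho_i=1\}$, with multiplier $v$. Contracting this identity with $\rho^*_i$, summing, and using $\sum_i\rho^*_i=1$ together with Euler's relations for the degree-one-homogeneous $I$ and the quadratic $\mathcal W$ yields $v=\mathcal H(\rho^*)+\tfrac12\sum_{i,j}\mathbb W_{ij}\rho^*_i\rho^*_j$.

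The crux is (vi). I would combine the conserved energy $\mathcal H(\rho(t),S(t))\equiv E_0$ from (ii) with the sign of its two parts. Since $\theta_{ij}(\rho)\ge0$ the kinetic piece $K\ge0$, so $\mathcal F(\rho(t))\le E_0$ for all $t<T^*$; as $\mathcal V,\mathcal W$ are bounded on the compact simplex and $\beta>0$, this bounds the Fisher information uniformly, $\beta I(\rho(t))\le E_0-\min\mathcal V-\min\mathcal W$. I would then show that $I$ repels the boundary: if $\rho_i(t_n)\to0$ along some sequence $t_n\to T^*$, then because the mass is $1$ and $G$ is strongly connected there is an edge $(i,j)$ joining the vanishing set to its complement across which $\rho_j$ stays bounded below; the hypothesis that $\widetilde\theta_{ij}(\rho)=0$ only when $\rho_i=\rho_j=0$ keeps $\widetilde\theta_{ij}$ bounded away from $0$ on that edge while $|\log\rho_i-\log\rho_j|^2\to\infty$, so $I(\rho(t_n))\to\infty$, contradicting the bound. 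Hence the orbit remains in $B=\{\rho\in\mathcal P(G):\mathcal F(\rho)\le E_0\}$, which is a compact subset of $\mathcal P_o(G)$ since $I\to\infty$ on $\partial\mathcal P(G)$. Finally, on $B$ the weights $\theta_{ij}$ are bounded below, so the energy bound also controls $(S_j-S_l)^2$ on every edge and, by connectivity, all $S$-differences; thus $S$ cannot blow up in finite time, $T^*=+\infty$, and $\rho(t)\in\mathcal P_o(G)$ for all $t>0$.

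The main obstacle is exactly this boundary analysis in (vi): turning $\rho_i\to0$ into $I\to\infty$ hinges on the connectivity-based selection of an edge straddling the vanishing coordinates and on the structural assumption on $\widetilde\theta$, and one must ensure the argument survives when several coordinates approach $\partial\mathcal P(G)$ simultaneously. Upgrading ``$\rho$ stays in the interior'' to global existence, by controlling $S$ as well, is the remaining and milder technical point.
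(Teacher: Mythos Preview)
Your proposal is correct and follows essentially the same route as the paper: items (i)--(v) are handled by the same direct verifications (the paper simply cites \cite{CLZ19b}), and for (vi) both you and the paper use energy conservation to bound $\beta I(\rho(t))$ uniformly and then argue by connectivity that $I(\rho)\to\infty$ on $\partial\mathcal P(G)$ under the hypothesis on $\widetilde\theta_{ij}$. Your version is in fact slightly more complete, since you also explain why $S$ cannot blow up in finite time (via the bound on $K$ and the lower bound on $\theta_{ij}$ on the compact set $B$), a step the paper's proof leaves implicit despite its own definition of $T^*$.
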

\begin{proof}
The proof of properties (i)-(v) is the same (except for the use of $\theta_{ij}$ instead of $\widetilde \theta_{ij}$) 
as that of \cite[Theorem 6]{CLZ19b}, thus we omit it.  Here we only prove (vi).
Since the coefficient of \eqref{dhs} is locally Lipschitz and $\rho^0\in \mathcal P_{o}(G)$, 
it is not difficult to obtain the local existence of  a unique solution $(\rho(t),S(t))$ in $[0,T^*),$
where $T^*>0$ is the largest time for which $(\rho(t),S(t))$ exists and $\rho(t) \in  \mathcal P_{o}(G)$. 
Thus, it suffices to show that the local solution can be extended to $T^*=\infty$, i.e., 
to show that the boundary is a repeller for $\rho(t)$.
Consider $B=\{\rho\in \mathcal P_o(G) \; | \; \beta I(\rho)\le \mathcal H(\rho,S)-\mathcal F(\rho)\}.$ 
It is enough to prove that $I(\rho)$ is positive infinity on the boundary.  Denote 
$M:=\mathcal H(\rho,S)-\inf_{\rho\in  \mathcal P_o(G) }\mathcal F(\rho).$
If there exists $\rho$ such that $\min_i \rho_i=0,$ and $\beta I(\rho)\le M$, then 
$M\ge \frac \beta 2\sum_{i}\sum_{j\in N(i)}\widetilde \omega_{ij}(\log(\rho_i)-\log(\rho_j))^2\widetilde \theta_{ij}(\rho).$
For some $i$, we have that $\rho_i=0$ and that for $j\in N(i)$, 
$$\beta \widetilde \omega_{ij}(\log(\rho_i)-\log(\rho_j))^2\widetilde \theta_{ij}(\rho)\le M.$$
This implies that $\widetilde \theta_{ij}(\rho)=0$ for any $j\in N(i)$.
Since $G$ is connected and $V$ is a finite set, we get that 
$\max_i \rho_i=0$, which leads to a contradiction.
\end{proof}

From Property (vi) in Proposition \ref{well-dhs}, it is clear that the Fisher information term helps maintain
positivity of the density function in the  Wasserstein-Hamiltonian flow. 
This fact motivated us to regularize the discretized  Wasserstein-Hamiltonian system \eqref{dhs}
by adding Fisher information, 
and the details are discussed in Section \ref{sec-pde}.

There are many choices for $\theta_{ij}$ and $\widetilde \theta_{ij}$, as long as we require that  
$\widetilde \theta_{ij}(\rho)=0$ only if $\rho_i=\rho_j=0$, as this is needed in order to get the lower
bound estimate on the density in Section \ref{sec-low}.  For $\theta_{ij}$, one can choose the upwind weight,
$\theta_{ij}^{U}(\rho)=\rho_{i}$, if $S_{j}>S_{i}$, 
the average weight $\theta_{ij}^{A}(\rho)=\frac {\theta_{i}+\theta_{j}}2$, or the logarithmic 
weight $\theta_{ij}^{L}(\rho)=\frac {\rho_{i}-\rho_{j}}{\log(\rho_{i})-\log(\rho_{j})}$.

\begin{rk}\label{rk-well}
The above results hold even when $G$ is not connected, in the following sense.  Consider the decomposition of
$G$ into disjoint connected components, and let $G=\cup_{j=1}^l G_{j}$.
Then, relative to each subgraph $(G_j,V^j,\omega^j)$,  
$\sum_{a_i\in V_j}\rho_{i}(t)=\sum_{a_i\in V_j}\rho_{i}^0$ and the properties (i)-(vi)
in Proposition \ref{well-dhs} also hold.
\end{rk}


\subsection{Spatial consistency for  Wasserstein-Hamiltonian flows}
When the graph $G$ is a lattice grid  on a domain $\mathcal M$ in $\mathbb R^d$,  
\eqref{dhs} can be viewed as a consistent spatial discretization of the  Wasserstein-Hamiltonian system \eqref{WassHam1}.
We show this next.

Let us consider a Hamiltonian in the density space
\begin{align*}
\mathcal H(\rho,S)&=\int_{\mathcal M}H(x,\nabla S(x))\rho(x)d{x}\\
&=\int_{\mathcal  M}\frac 12|\nabla S(x)|^2\rho(x)dx+\mathcal F(\rho),
\end{align*}
with the potential $\mathcal F(\rho)=\int_{\mathcal M}\mathbb V(x)\rho(x)dx+
\frac 12\int_{\mathcal M}\int_{\mathcal M} \mathbb W(x,y)\rho(x)\rho(y)dxdy+\beta I(\rho),$ and 
$I(\rho)=\int_{\mathcal M} |\nabla \log (\rho)|^2\rho dx$.
The corresponding  Wasserstein-Hamiltonian vector field is 
\begin{equation}\label{hpde}\begin{split}
\frac {\partial \rho}{\partial t}-\frac {\delta \mathcal H(\rho,S)}{\delta S}=0,\; \rho(0)=\rho^0,\\
\frac {\partial S}{\partial t}+\frac {\delta \mathcal H(\rho,S)}{\delta \rho}=0,\; S(0)=S^0.
\end{split}\end{equation}
We assume that for some $T^*>0$ there exists a unique smooth solution $(\rho,S)$ of \eqref{hpde}
for all $t\le T^*$. 
In the following, we  show that the semi-discretization \eqref{dhs} 
is consistent with \eqref{hpde} for all $t \leq T^*$. 

For simplicity, we consider the lattice graph $(G,V,\Omega)$, which is a cartesian product of $d$
one dimensional lattices: $G=G_1\times\cdots\times G_d$ with $G_k=(V_k,E_k)$, $k=1,\dots, d$.  Also, let
us assume that there is no interaction potential in \eqref{dhs}. 
Denote $\omega=\frac 1{h^2}$, let $i=(i_1,i_2,\cdots,i_d)$ represents a point $x(i)$ in 
$\mathbb R^d$ and let the set of neighbors of $i$ be indicated by $N(i)$:
$$N_{k}(i)=\{(i_1,\cdots,i_{k-1},j_k,i_{k+1},\cdots,i_d)\,\ :\,\ (i_k,j_k)\in E_k\}.$$
For the probability weights $\theta_{ij}(\rho)$ and $\widetilde \theta_{ij}(\rho)$ in \eqref{dhs},
we assume that 
\begin{align*}
\theta_{ij}(\rho)=\Theta(\rho_i,\rho_j), \quad \widetilde  \theta_{ij}(\rho)=\widetilde \Theta(\rho_i,\rho_j),
\end{align*}
where $\Theta$ and $\widetilde \Theta$ are symmetric $C^{1+\epsilon}$-continuous 
functions, $\epsilon>0$. 
In order to show the spatial consistency of \eqref{dhs}, we further
assume that 
\begin{align}\label{con-wei}
\frac {\partial \Theta(x,x)}{\partial x}= \frac 12, \; \Theta(x,x)= x.
\end{align} 

\begin{prop}\label{con-gen}
Assume that  $\theta$ and $\widetilde \theta$ satisfy \eqref{con-wei}.
Then, the semi-discretization \eqref{dhs} is a consistent finite difference scheme for 
the Hamiltonian PDE \eqref{hpde}.
\end{prop}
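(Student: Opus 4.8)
The plan is to regard $(\rho,S)$ as the smooth solution of \eqref{hpde} on $[0,T^*]$, to sample it on the lattice by setting $\rho_i(t)=\rho(t,x(i))$ and $S_i(t)=S(t,x(i))$, and to show that the residual obtained when these samples are inserted into the right-hand sides of \eqref{dhs} tends to $0$ as $h\to 0$; this is precisely consistency in the finite-difference sense. First I would write \eqref{hpde} out explicitly. A direct computation of the variational derivatives gives $\frac{\delta\mathcal H}{\delta S}=-\nabla\cdot(\rho\nabla S)$ and, using integration by parts together with $\nabla\cdot(\nabla\rho/\rho)=\Delta\rho/\rho-|\nabla\rho|^2/\rho^2$, $\frac{\delta\mathcal H}{\delta\rho}=\tfrac12|\nabla S|^2+\mathbb V+\beta\bigl(-2\Delta\rho/\rho+|\nabla\rho|^2/\rho^2\bigr)$, so that \eqref{hpde} is the pair $\partial_t\rho+\nabla\cdot(\rho\nabla S)=0$ and $\partial_t S+\tfrac12|\nabla S|^2+\mathbb V+\beta\,\delta I/\delta\rho=0$. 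Since $G$ is a Cartesian product of one-dimensional lattices and the edge weights are scaled as $\omega_{ij}=\widetilde\omega_{ij}=h^{-2}$, every sum over $N(i)$ in \eqref{dhs} and in \eqref{DiscFisher} splits into $d$ sums over coordinate directions; it therefore suffices to carry out a one-dimensional truncation-error expansion in each direction $k$ and add the results, since $\nabla\cdot(\rho\nabla S)$, $|\nabla S|^2$ and $\delta I/\delta\rho$ decompose the same way.

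For the continuity equation and for the kinetic and potential parts of the $S$-equation, the key consequence of \eqref{con-wei} together with $\Theta\in C^{1+\epsilon}$ is the local expansion $\theta_{ij}=\tfrac12(\rho_i+\rho_j)+O(h^{1+\epsilon})$, the H\"older remainder coming from $\partial_2\Theta$ being only $\epsilon$-H\"older. Inserting the Taylor expansions of $\rho(x_i\pm h)$ and $S(x_i\pm h)$, the odd powers of $h$ cancel between the two neighbours $j=i\pm 1$, and one obtains $\frac1{h^2}\sum_{j\in N(i)}(S_j-S_i)\theta_{ij}=\partial_x(\rho\,\partial_x S)+O(h^\epsilon)$ and $\frac1{2h^2}\sum_{j\in N(i)}(S_i-S_j)^2\partial_{\rho_i}\theta_{ij}=\tfrac12|\partial_x S|^2+O(h^\epsilon)$, while $\mathbb V_i$ matches $\mathbb V(x(i))$ exactly. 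Summing over $k$ recovers $\nabla\cdot(\rho\nabla S)$ and $\tfrac12|\nabla S|^2$.

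The genuinely delicate step, and the one I expect to be the main obstacle, is the Fisher term $\beta\,\partial_{\rho_i}I(\rho)$. Two points require care. First, $\rho_i$ enters $I$ both as a head index and as a neighbour index; by the symmetry of $\widetilde\omega$, $\widetilde\theta$ and of the adjacency relation these two groups of terms coincide, so the prefactor $\tfrac12$ in \eqref{DiscFisher} cancels and $\partial_{\rho_i}I=\sum_{j\in N(i)}\widetilde\omega_{ij}\bigl[\,2(\log\rho_i-\log\rho_j)\rho_i^{-1}\widetilde\theta_{ij}+(\log\rho_i-\log\rho_j)^2\partial_{\rho_i}\widetilde\theta_{ij}\,\bigr]$. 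Second, writing $L=\log\rho$ and expanding $\log\rho(x_i\pm h)$, $\widetilde\theta_{ij}$ (to first order, with $C^{1+\epsilon}$ remainder) and $\partial_{\rho_i}\widetilde\theta_{ij}$ (to leading order), the $O(h)$ contributions cancel between $j=i\pm1$, and the surviving $h^2$ terms are $-2h^2L''$ from the first bracket together with a multiple of $(L')^2$ assembled from \emph{both} brackets: the piece $2(\log\cdots)\widetilde\theta/\rho_i$ contributes $-4c\,h^2(L')^2$ and the piece $(\log\cdots)^2\partial_{\rho_i}\widetilde\theta$ contributes $+2c\,h^2(L')^2$, where $c=\partial_2\widetilde\Theta(x,x)$.

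After multiplying by $\widetilde\omega=h^{-2}$ this gives $\partial_{\rho_i}I=-2L''-2c(L')^2+O(h^\epsilon)$, and the requirement $c=\tfrac12$ in \eqref{con-wei} is exactly what forces the coefficient to be $-1$, producing $-2(\log\rho)''-((\log\rho)')^2=-2\,\partial_{xx}\rho/\rho+(\partial_x\rho)^2/\rho^2$ in each direction; summing over $k$ yields $\delta I/\delta\rho$. Since every residual is $O(h^\epsilon)$, consistency follows. The main technical effort lies in correctly combining the two bracketed contributions in $\partial_{\rho_i}I$ and in checking that the limited $C^{1+\epsilon}$ regularity still makes all remainders $o(1)$ after division by $h^2$; this also shows that the order of the scheme is $O(h^\epsilon)$ in general, improving to $O(h)$ (or better, under additional symmetry) when $\Theta,\widetilde\Theta$ are smoother.
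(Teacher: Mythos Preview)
Your approach is essentially the same as the paper's: sample the smooth solution on the lattice, Taylor-expand each discrete term in every coordinate direction, and check that the residuals vanish as $h\to 0$. Both proofs exploit the product structure of the lattice and the conditions \eqref{con-wei} in the same way.

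There are two differences worth noting. First, the paper works under the temporary assumption that $\Theta,\widetilde\Theta\in C^2$, obtains an $\mathcal O(h^2)$ truncation error, and then appeals to an interpolation argument to cover the $C^{1+\epsilon}$ case; you instead carry the H\"older remainder through directly and arrive at $\mathcal O(h^\epsilon)$, which is cleaner and makes the dependence on $\epsilon$ explicit. Second, your treatment of $\partial_{\rho_i}I(\rho)$ is more complete: you correctly identify both contributions $2(\log\rho_i-\log\rho_j)\rho_i^{-1}\widetilde\theta_{ij}$ and $(\log\rho_i-\log\rho_j)^2\partial_{\rho_i}\widetilde\theta_{ij}$, and show how they combine (via $c=\tfrac12$) to yield $-2(\log\rho)''-((\log\rho)')^2=-2\rho''/\rho+(\rho')^2/\rho^2$. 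The paper's written proof displays only the second of these two pieces when expanding the $S$-equation, so your argument is in fact the more careful of the two on this point.
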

\begin{proof}
Let $\rho_i(t)=\rho(t,x(i))$, $S_i(t)=S(t,x(i))$ and $e_1, \dots, e_d$, be the standard unit vectors. 
The  lattice graph in the $e_k$ 
direction contains two points near $i$, i.e., $x(i)-e_kh$ and $x(i)+e_kh$, which we label   
$i^+$ and $i^{-}$ for short.  At first, assume that $\Theta$ and $\widetilde \Theta$ are $C^2$ continuous.
Then, by Taylor expansion at $i$ in the $e_a$ direction, we obtain
\begin{align*}
&\sum_{k}\frac 1{h^2}(S_{i}-S_{i^+})\theta_{ii^{+}}(\rho)+\sum_{k}\frac 1{h^2}(S_{i}-S_{i^{-}})\theta_{ii^{-}}(\rho)\\\nonumber 
&=\sum_{k}\frac 1{h^2}(-\frac {\partial S} {\partial x_k}(x(i),t)h+\frac 12\frac {\partial^2 S} {\partial x_k^2}(x(i),t)h^2+\mathcal O(h^3))(\theta_{ii}(\rho)+\frac {\partial \theta_{ii}(\rho)}{\partial \rho_i}\frac {\partial \rho_{i}}{\partial x_{k}}h+\mathcal O(h^2))\\
&\quad+ \sum_{k}\frac 1{h^2}(\frac {\partial S} {\partial x_k}(x(i),t)h+\frac 12\frac {\partial^2 S} {\partial x_k^2}(x(i),t)h^2+\mathcal O(h^3))(\theta_{ii}(\rho)-\frac {\partial \theta_{ii}(\rho)}{\partial \rho_{i}}\frac {\partial \rho_{i}}{\partial x_{k}}h+\mathcal O(h^2))\\
&=\sum_{k}(\frac {\partial^2 S} {\partial x_k^2}(x(i),t)\theta_{ii}(\rho)+2\frac {\partial S} {\partial x_k}(x(i),t)\frac {\partial \theta_{ii}(\rho)}{\partial \rho_{i}}\frac {\partial \rho_{i}}{\partial x_{k}})+\mathcal O(h^2).
\end{align*}
Similarly,  
\begin{align*}
&-\frac 12\sum_{k}\frac 1{h^2}(S_{i^+}-S_i)^2 \frac {\partial \theta_{ii^+}(\rho)}{\partial \rho_i}-
\frac 12\sum_{k}\frac 1{h^2}(S_{i^{-}}-S_i)^2 \frac {\partial \theta_{ii^{-}}(\rho)}{\partial \rho_i}\\
&\quad -\beta \sum_{k}\frac {1}{h^2} |\log(\rho_{i^+})-\log(\rho_i)|^2
\frac {\partial \widetilde \theta_{ii^{+}}(\rho)}{\partial \rho_i}-\beta \sum_{k}
\frac {1}{h^2} |\log(\rho_{i^-})-\log(\rho_i)|^2\frac {\partial \widetilde \theta_{ii^{-}}(\rho)}{\partial \rho_i}\\
&=-\frac 1{h^2}\sum_{k}(\frac {\partial S} {\partial x_k}(x(i),t)h+
\mathcal O(h^2))^2(\frac {\partial \theta_{ii}(\rho)}{\partial \rho_i}+\mathcal O(h))\\
&\quad-\beta \frac 1{h^2}\sum_{k}(\frac {\partial \log(\rho_{i})}{\partial x_k}+
\mathcal O(h^2))^2(\frac {\partial \widetilde  \theta_{ii}(\rho)}{\partial \rho_i}+\mathcal O(h))\\
&=-\sum_{k}|\frac {\partial S} {\partial x_k}(x(i),t)|^2\frac {\partial \theta_{ii}(\rho)}{\partial \rho_i}
-2\beta \frac 1{h^2}\sum_{k}|\frac {\partial \log(\rho_{i})}{\partial x_k}|^2
\frac {\partial \widetilde  \theta_{ii}(\rho)}{\partial \rho_i}+\mathcal O(h^2).
\end{align*}
Thus, if $\frac {\partial \theta_{ii}(\rho)}{\partial \rho_i}=\frac {\partial \widetilde \theta_{ii}(\rho)}{\partial \rho_i}= \frac 12, \widetilde \theta_{ii}(\rho)=\theta_{ii}(\rho)=\rho_i$, we have 
\begin{align*}
&\frac {d\rho(t,x(i))}{d t}-\sum_{k}\frac 1{h^2}(S_{i}-S_{i^+})\theta_{ii^{+}}(\rho)-\sum_{k}\frac 1{h^2}(S_{i}-S_{i^{-}})\theta_{ii^{-}}(\rho)\\
&=\frac {\partial \rho(t,x(i))}{\partial t}+\nabla_{x_k}\cdot(\nabla_{x_k} S(t,x(i))\rho(t,x(i)))+\mathcal O(h^2),\\
&\frac {dS(t,x(i))}{dt}+\frac 12\sum_{k}\frac 1{h^2}(S_{i^+}-S_i)^2 \frac {\partial \theta_{ii^+}(\rho)}{\partial \rho_i}+\frac 12\sum_{k}\frac 1{h^2}(S_{i^{-}}-S_i)^2 \frac {\partial \theta_{ii^{-}}(\rho)}{\partial \rho_i}\\
&+\beta \sum_{k}\frac {1}{h^2} |\log(\rho_{i^+})-\log(\rho_i)|^2\frac {\partial \widetilde \theta_{ii^{+}}(\rho)}{\partial \rho_i}+\beta \sum_{k}\frac {1}{h^2} |\log(\rho_{i^-})-\log(\rho_i)|^2\frac {\partial \widetilde \theta_{ii^{-}}(\rho)}{\partial \rho_i}\\
&+V(x(i))=\frac {\partial S(t,x(i))}{\partial t}+\frac 12 |\nabla_{x_k} S(t,x(i))|^2+\beta\frac {\partial I}{\partial \rho_i}(\rho(t,x(i)))+V(x(i))+\mathcal O(h^2),
\end{align*}
which implies that \eqref{dhs} is a second order consistent semi-discretization scheme. By interpolation arguments, 
we complete the proof for the case that $\Theta$ and $\widetilde \Theta$ are $C^{1+\epsilon}$-continuous.
\end{proof}

As we show next, even if $\Theta$ and $\widetilde \Theta$ are not sufficiently regular, 
spatial consistency still holds as long as \eqref{con-wei} holds.
For example, one can take $\theta$ as the upwind weight,
$\theta_{ij}^{U}(\rho)=\Theta^U(\rho_i,\rho_j):=\rho_{i}$, if $S_{j}>S_{i}$, 
$\widetilde \theta$ satisfies \eqref{con-wei} and $\widetilde \Theta$ is symmetric $C^{1+\epsilon}$-continuous  .
\begin{prop}\label{con-up}
Assume that $\theta=\theta^U$, and that $\widetilde \theta$ satisfies \eqref{con-wei}. Then 
\eqref{dhs} is a consistent spatial discretization of \eqref{hpde}.
\end{prop}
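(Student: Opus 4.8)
The plan is to reduce to the only genuinely new ingredient, namely the flux/kinetic terms carrying the non-smooth upwind weight $\theta^U$, and to recycle Proposition \ref{con-gen} for everything else. Indeed, the Fisher-information contribution in \eqref{dhs} is built from $\widetilde\theta$, which by hypothesis satisfies \eqref{con-wei} and is $C^{1+\epsilon}$-continuous, so its local truncation error is controlled exactly as in the proof of Proposition \ref{con-gen}; likewise the linear and interaction potential terms are point evaluations that converge directly. Since consistency is a pointwise statement about the local truncation error at each fixed grid point $x(i)$ where the exact solution $(\rho,S)$ is smooth, it therefore suffices to expand, direction by direction, the term $\sum_{j\in N(i)}\omega_{ij}(S_j-S_i)\theta_{ij}^U(\rho)$ in the continuity equation and the term $\frac 12\sum_{j\in N(i)}\omega_{ij}(S_i-S_j)^2\frac{\partial\theta_{ij}^U}{\partial\rho_i}$ in the $S$-equation, and match them against $\nabla\cdot(\rho\nabla S)$ and $\frac12|\nabla S|^2$ respectively.

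The main device is a case analysis on the sign of $\partial S/\partial x_k(x(i))$ in each coordinate direction $e_k$, using the convention that $\theta_{ij}^U=\rho_i$ if $S_j>S_i$ and $\theta_{ij}^U=\rho_j$ if $S_j<S_i$ (the density at the node of smaller $S$-value), so that $\partial\theta_{ij}^U/\partial\rho_i$ equals $1$ when $S_j>S_i$ and $0$ when $S_j<S_i$. When $\partial S/\partial x_k(x(i))\neq 0$, for all $h$ small enough the signs of the discrete differences $S_{i^\pm}-S_i$ are locked by the sign of $\partial S/\partial x_k$, so the upwind selection is fixed: in the continuity term one factor is $\rho_i$ and the other is the density at the upwind neighbor, and a Taylor expansion yields $\partial_{x_k}(\rho\,\partial_{x_k}S)+\mathcal{O}(h)$; in the $S$-term only the single upwind edge carries derivative $1$, and its square expands to $\frac12|\partial_{x_k}S|^2+\mathcal{O}(h)$. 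Summing over $k$ reproduces \eqref{hpde} with an $\mathcal{O}(h)$ remainder, which already establishes consistency, now generically only first order in contrast with the second order of Proposition \ref{con-gen}.

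The delicate point, and the one I expect to require the most care, is the degenerate set where $\partial S/\partial x_k(x(i))=0$, since there the upwind direction is not resolved by the first derivative and the naive sign split breaks down. Here I would argue that for small $h$ the sign of $S_{i^\pm}-S_i\approx\frac{h^2}{2}\partial^2_{x_k}S$ is instead governed by $\partial^2_{x_k}S(x(i))$, so that both neighbors lie on the same side of $x(i)$ and receive the same upwind label. One then checks directly that the continuity term still collapses to $\rho\,\partial^2_{x_k}S=\partial_{x_k}(\rho\,\partial_{x_k}S)$, the $\partial_{x_k}\rho\,\partial_{x_k}S$ piece vanishing because $\partial_{x_k}S=0$, while the $S$-term is $\mathcal{O}(h^2)$ and matches $\frac12|\partial_{x_k}S|^2=0$; thus consistency persists through the gradient-vanishing points. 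Finally I would emphasize that this pointwise argument is exactly what is needed: at a fixed $x(i)$ either $\partial_{x_k}S\neq 0$, in which case the locked-sign expansion applies for $h$ small, or $\partial_{x_k}S=0$, in which case the degenerate expansion applies, so the local truncation error tends to zero in every case, completing the proof.
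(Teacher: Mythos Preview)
Your proposal is correct and follows essentially the same approach as the paper: fix a sign configuration for the upwind selection, Taylor-expand the flux and kinetic terms to recover $\nabla\cdot(\rho\nabla S)$ and $\tfrac12|\nabla S|^2$ with an $\mathcal{O}(h)$ remainder, and defer the $\widetilde\theta$-dependent Fisher part to Proposition~\ref{con-gen}. Your explicit treatment of the degenerate set $\partial_{x_k}S(x(i))=0$ is in fact more careful than the paper's, which simply fixes one configuration (namely $S_{i^+}\le S_i\le S_{i^-}$) and remarks that the remaining cases are handled similarly.
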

\begin{proof}
We use the same notations as in the proof of Proposition \ref{con-gen}. For simplicity, we assume that $S(t,x(i)+e_kh)\le S(t,x(i)) \le S(t,x(i)-e_kh)$ and  
that  $\widetilde \Theta$ is $C^2$ continuous.  
Similarly, we can show the same results for other possible configurations. 
By Taylor expansion, we obtain 
\begin{align*}
&\sum_{k}\frac 1{h^2}(S_{i}-S_{i^+})\theta_{ii^{+}}(\rho)+\sum_{k}\frac 1{h^2}(S_{i}-S_{i^{-}})\theta_{ii^{-}}(\rho)\\\nonumber 
&=\sum_{k}\frac 1{h^2}(S(t,x(i))-S(t,x(i)+e_kh))\rho_{i^+}+\sum_{k}\frac 1{h^2}(S(t,x(i))-S(t,x(i)-e_kh))\rho_{i}\\
&=\sum_{k}\frac 1{h^2}(-\frac {\partial S} {\partial x_k}(x(i),t)h+\frac 12\frac {\partial^2 S} {\partial x_k^2}(x(i),t)h^2+\mathcal O(h^3))\rho_{i^+}\\
&\quad -\sum_{k}\frac 1{h^2}(\frac {\partial S} {\partial x_k}(x(i),t)h+\frac 12\frac {\partial^2 S} {\partial x_k^2}(x(i),t)h^2+\mathcal O(h^3))\rho_{i}\\
&=\frac 1h\frac {\partial S} {\partial x_k}(x(i),t)(\rho_{i^+}-\rho_i)+\frac 12\frac {\partial^2 S} {\partial x_k^2}(x(i),t)(\rho_{i^+}+\rho_i)+\mathcal O(h)\\
&=\nabla_{x_k}\cdot(\rho(t,x(i))\nabla_{x_k} S(t,x(i)))+\mathcal O(h),
\end{align*}
and 
\begin{align*}
&-\frac 12\sum_{k}\frac 1{h^2}(S_{i^+}-S_i)^2 \frac {\partial \theta_{ii^+}(\rho)}{\partial \rho_i}+-\frac 12\sum_{k}\frac 1{h^2}(S_{i^{-}}-S_i)^2 \frac {\partial \theta_{ii^{-}}(\rho)}{\partial \rho_i}\\
&\quad -\beta \sum_{k}\frac {1}{h^2} |\log(\rho_{i^+})-\log(\rho_i)|^2\frac {\partial \widetilde \theta_{ii^{+}}(\rho)}{\partial \rho_i}-\beta \sum_{k}\frac {1}{h^2} |\log(\rho_{i^-})-\log(\rho_i)|^2\frac {\partial \widetilde \theta_{ii^{-}}(\rho)}{\partial \rho_i}\\
&=-\frac 12\sum_{k}\frac 1{h^2}(S_{i^+}-S_i)^2
 -\beta \sum_{k}\frac {1}{h^2} |\log(\rho_{i^+})-\log(\rho_i)|^2\frac {\partial \widetilde \theta_{ii^{+}}(\rho)}{\partial \rho_i}\\
&\quad-\beta \sum_{k}\frac {1}{h^2} |\log(\rho_{i^-})-\log(\rho_i)|^2\frac {\partial \widetilde \theta_{ii^{-}}(\rho)}{\partial \rho_i}\\
&=-\frac 1{2h^2}\sum_{k}(\frac {\partial S} {\partial x_k}(x(i),t)h+\mathcal O(h^2))^2\\
&\quad-2\beta \frac 1{h^2}\sum_{k}(\frac {\partial \log(\rho_{i})}{\partial x_k}+\mathcal O(h^2))^2(\frac {\partial \widetilde  \theta_{ii}(\rho)}{\partial \rho_i}+\mathcal O(h))\\
&=\frac 12\sum_{k}|\frac {\partial S} {\partial x_k}(x(i),t)|^2-2\beta \frac 1{h^2}\sum_{k}|\frac {\partial \log(\rho_{i})}{\partial x_k}|^2\frac {\partial \widetilde  \theta_{ii}(\rho)}{\partial   \rho_i}+\mathcal O(h).
\end{align*}
Therefore, combining with the above estimate and \eqref{hpde}, we have that
\begin{align*}
&\frac {d\rho(t,x(i))}{d t}-\sum_{a}\frac 1{h^2}(S_{i}-S_{i^+})\theta_{ii^{+}}(\rho)-\sum_{k}\frac 1{h^2}(S_{i}-S_{i^{-}})\theta_{ii^{-}}(\rho)\\
&=\frac {\partial \rho(t,x(i))}{\partial t}+\sum_{k}\nabla_{x_k}\cdot(\nabla_{x_k} S(t,x(i))\rho(t,x(i)))+\mathcal O(h)=\mathcal O(h),\\
&\frac {dS(t,x(i))}{dt}+\frac 12\sum_{k}\frac 1{h^2}(S_{i^+}-S_i)^2 \frac {\partial \theta_{ii^+}(\rho)}{\partial \rho_i}+\frac 12\sum_{k}\frac 1{h^2}(S_{i^{-}}-S_i)^2 \frac {\partial \theta_{ii^{-}}(\rho)}{\partial \rho_i}\\
& +\beta \sum_{k}\frac {1}{h^2} |\log(\rho_{i^+})-\log(\rho_i)|^2\frac {\partial \widetilde \theta_{ii^{+}}(\rho)}{\partial \rho_i}+\beta \sum_{k}\frac {1}{h} |\log(\rho_{i^-})-\log(\rho_i)|^2\frac {\partial \widetilde \theta_{ii^{-}}(\rho)}{\partial \rho_i}\\
&+V(x(i))=\frac {\partial S(t,x(i))}{\partial t}+\sum_{k}\frac 12 |\nabla_{x_k} S(t,x(i))|^2+\beta\frac {\partial I}{\partial \rho_i}(\rho(t,x(i)))+V(x(i))+\mathcal O(h).
\end{align*}
\end{proof}
\begin{rk}
In \eqref{dhs}, take 
$\beta=\frac {\hbar^2}8>0$, a fixed number. By introducing the discrete Madelung transformation  $u(t)=(u_j(t))_{j=1}^N=(\sqrt{\rho_j(t)}e^{\mathbf i\frac {S_j(t)}{\hbar}})_{j=1}^N,$ 
\eqref{dhs} can be viewed as a nonlinear spatial approximation of the nonlinear 
Schr\"odinger equation and can be rewritten as 
\begin{align*}
\hbar\mathbf i \frac {du_j}{dt}=-\frac {\hbar^2}2(\Delta_Gu)_{j}+u_j\mathbb V_j+u_j\sum_{l=1}^N\mathbb W_{jl}|u_l|^2,
\end{align*}
where the Laplacian on the graph is defined by 
\begin{align*}
(\Delta_Gu)_{j}&:=-u_j\Big(\frac 1{|u_j|^2}\big[\sum_{l\in N(j)}\omega_{jl}(Im(\log(u_j))-Im(\log(u_l))) \theta_{jl}\\
&\quad +\sum_{l\in N(j)}\widetilde \omega_{jl}(Re(\log(u_j))-Re(\log(u_l)))\widetilde \theta_{jl}\big]\\
&\quad +\sum_{l\in N(j)}\omega_{jl}|Im(\log(u_j)-\log(u_l))|^2\frac {\partial \theta_{jl}}{\partial \rho_j}\\
&\quad +\sum_{l\in N(j)}\widetilde \omega_{jl}|Re(\log(u_j)-\log(u_l))|^2\frac {\partial \widetilde \theta_{jl}}{\partial \rho_j}\Big).
\end{align*}
\end{rk}

\section{Lower bound estimate of the density}\label{sec-low}

In this section, we give an explicit lower bound for the density function in \eqref{dhs} with the logarithmic weight $\widetilde\theta_{ij}(\rho)=\Theta^{L}(\rho_i,\rho_j):=\frac {\rho_{i}-\rho_{j}}{\log(\rho_{i})-\log(\rho_{j})}$. 
We take two basic graphs as structures to illustrate the derivation of the lower bound. 
With appropriate modifications, one can obtain the lower bounds for more general graphs
and different probability weights $\widetilde \theta.$

\subsection{Lower bound for periodic nearest neighbor structure}
This is the classic nearest neighbor graph, with periodic boundary conditions.
Our goal is to analyze the properties of the extreme point of the Fisher information \eqref{DiscFisher} in
the present case,
\begin{equation}\label{FishInfo2}
I(\rho)=\sum_{i=1}^N\widetilde \omega_{i,{i+1}}(\log(\rho_{i})-\log(\rho_{i+1}))(\rho_{i}-\rho_{i+1}),
\end{equation}
on the set $\mathcal P_o(G)$.  Denote the tangent space at $\rho\in \mathcal P_o(G)$ by 
$T_{\rho}\mathcal P_o(G)=\{(\sigma)_{i=1}^N\in \mathbb R^N | \sum_{i=1}^N \sigma_{i}=0\}$.  
  
\begin{lm}\label{con-per}
The function $I(\rho)$ in \eqref{FishInfo2}
is strictly convex on $\mathcal P_o(G)$ and achieves its unique minimum at the uniform distribution. 
\end{lm}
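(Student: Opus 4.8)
The plan is to reduce the whole statement to the behavior of a single two–variable building block and then exploit connectivity of the cycle. Write $I(\rho)=\sum_{i=1}^N\widetilde\omega_{i,i+1}\,f(\rho_i,\rho_{i+1})$, where $f(a,b)=(\log a-\log b)(a-b)$ on the open quadrant $\{a,b>0\}$. The first observation I would record is that the two factors always share the same sign, so $f\ge 0$, with $f(a,b)=0$ if and only if $a=b$. Summing, $I(\rho)\ge 0$, and since $G$ is connected, $I(\rho)=0$ forces $\rho_i=\rho_{i+1}$ across every edge, hence $\rho$ constant; together with $\sum_i\rho_i=1$ this already pins the only possible zero at the uniform distribution $\rho^\ast=(1/N,\dots,1/N)$, where $I(\rho^\ast)=0$.

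The core computation is the Hessian of $f$. A direct calculation gives $f_{aa}=(a+b)/a^2$, $f_{bb}=(a+b)/b^2$, and $f_{ab}=-(a+b)/(ab)$, so that
$$\nabla^2 f(a,b)=(a+b)\begin{pmatrix} 1/a \\ -1/b\end{pmatrix}\begin{pmatrix} 1/a & -1/b\end{pmatrix},$$
a rank-one positive semidefinite matrix. Consequently each summand $\rho\mapsto f(\rho_i,\rho_{i+1})$ is convex, and $I$ is convex on $\mathcal P_o(G)$ as a nonnegative combination of convex functions of the pairs $(\rho_i,\rho_{i+1})$.

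For strict convexity I would evaluate the quadratic form of $\nabla^2 I$ on a tangent vector $\sigma\in T_\rho\mathcal P_o(G)$. Using the rank-one form above,
$$\sigma^\top \nabla^2 I(\rho)\,\sigma=\sum_{i=1}^N\widetilde\omega_{i,i+1}\,(\rho_i+\rho_{i+1})\left(\frac{\sigma_i}{\rho_i}-\frac{\sigma_{i+1}}{\rho_{i+1}}\right)^2\ge 0.$$
This vanishes only if $\sigma_i/\rho_i=\sigma_{i+1}/\rho_{i+1}$ across every edge. Because the cycle is connected, $\sigma_i/\rho_i$ must then be a single common constant $c$, whence $\sigma_i=c\rho_i$ and $\sum_i\sigma_i=c\sum_i\rho_i=c=0$ forces $\sigma=0$. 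Thus $\nabla^2 I(\rho)$ is positive definite on $T_\rho\mathcal P_o(G)$; since every segment inside $\mathcal P_o(G)$ has direction lying in this tangent space, $I$ is strictly convex on $\mathcal P_o(G)$.

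It remains to identify the minimizer. Strict convexity guarantees at most one minimizer, and the nonnegativity step already exhibits $\rho^\ast$ as a global minimum; alternatively one checks $f_a(a,a)=f_b(a,a)=0$, so $\nabla I(\rho^\ast)=0$, identifying $\rho^\ast$ as the unique critical point and hence the unique minimum. I expect the only delicate step to be the passage from edgewise vanishing of the quadratic form to $\sigma=0$: this is precisely where connectivity of $G$ is indispensable, and it is the same structural input that will let the argument be transplanted to the other graph structures considered later in this section.
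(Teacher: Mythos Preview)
Your proof is correct and follows essentially the same route as the paper: both compute the Hessian quadratic form and arrive at the identical sum-of-squares expression $\sum_i \widetilde\omega_{i,i+1}(\rho_i+\rho_{i+1})(\sigma_i/\rho_i-\sigma_{i+1}/\rho_{i+1})^2$, then use connectivity plus $\sum_i\sigma_i=0$ to force $\sigma=0$. Your presentation is a bit cleaner in two places: you factor the $2\times 2$ Hessian of $f(a,b)$ as a rank-one matrix rather than writing out all entries, and you identify the minimizer directly from $I\ge 0$ with equality only at the uniform distribution, whereas the paper goes through a Lagrange-multiplier computation to the same end.
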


\begin{proof}
The convexity of $I$ can be obtained by directly calculating the Hessian matrix and proving 
$$\min_{\sigma \in T_{\rho}\mathcal P_o(G)}\{\sigma^T\text{Hess}( I(\rho))\sigma | \sigma^T\sigma=1\}>0.$$
Direct calculations yield that 
$$ \frac {\partial^2}{\partial \rho_i\rho_j} I(\rho)=\left\{
\begin{array}{rcl}
 &\widetilde\omega_{i,{i+1}}\frac 1{\rho_i^2}(\rho_i+\rho_{i+1})+
\widetilde\omega_{i,i-1}\frac 1{\rho_i^2}(\rho_{i}+\rho_{i-1})&\quad \text{for}\quad {j=i};\\
 &-\widetilde\omega_{i,{i+1}}\frac 1{\rho_i\rho_{i+1}} (\rho_{i}+\rho_{i+1}) &\quad  \text{for} \quad {j=i+1};\\
 &-\widetilde\omega_{i,{i-1}}\frac 1{\rho_i\rho_{i-1}} (\rho_{i}+\rho_{i-1})  &\quad  \text{for} \quad {j=i-1};\\
 &0\qquad\qquad  &\quad  \text{otherwise}.
\end{array} \right. $$
Thus we obtain 
\begin{align*}
\sigma^T\text{Hess} I(\rho)\sigma
&=\sum_{i=1}^N(\widetilde\omega_{i,{i+1}}\frac 1{\rho_i^2}(\rho_i+\rho_{i+1})+
\widetilde\omega_{i,i-1}\frac 1{\rho_i^2}(\rho_{i}+\rho_{i-1}))\sigma_i^2\\
&\quad+\sum_{i=1}^N(\widetilde\omega_{i,{i+1}}\frac 1{\rho_i\rho_{i+1}} (\rho_{i}+\rho_{i+1})\sigma_{i}\sigma_{i+1} 
+\widetilde\omega_{i,i-1}\frac 1{\rho_i\rho_{i-1}} (\rho_{i}+\rho_{i-1})\sigma_{i}\sigma_{i-1})\\
&=\sum_{i=1}^N\widetilde\omega_{i,{i+1}}(\rho_i+\rho_{i+1})(\frac {\sigma_i}{\rho_i}-\frac {\sigma_{i+1}}{\rho_{i+1}})^2\ge 0,
\end{align*}
which implies the semi-positvity of $\text{Hess}( I(\rho))$.
To show strict convexity, assume that there exists a unit vector $\sigma^*$ such that ${\sigma^*}^T\text{Hess} I(\rho)\sigma^*=0$. 
Then we have $\frac {\sigma_{1}}{\rho_1}=\frac {\sigma_{i}}{\rho_i}$ for $i=2,\cdots,N$. 
Since $\sigma \in T_{\rho}\mathcal P_o(G)$, then $\sum_{i=1}^N\sigma_{i}=\sigma_1(1+\sum_{i=2}^N\frac {\rho_{i}}{\rho_{1}})=0$.
As $\rho\in \mathcal P_o(G)$, we conclude that $\sigma_{i}=0$ for all $i$, which contradicts that ${\sigma^*}^T\sigma^*=1$. 
Strict convexity implies that there is a unique minimum point on $\mathcal P_o(G)$. 
By using the Lagrange multiplier technique to find the minimum of $I(\rho)$ under the constraint 
$\sum_{i=1}^N{\rho_i}=1$ and taking the first derivative with respect to $\rho$, 
we obtain that the extreme point satisfies
\begin{align*}
\widetilde\omega_{i,{i+1}} \phi(\frac {\rho_{i+1}}{\rho_i})+\widetilde\omega_{i-1,{i}}
\phi(\frac {\rho_{i-1}}{\rho_i})=\lambda, \quad \text{for}\quad  i\le N,
\end{align*} 
where $\phi(t)=1-t-\log(t), t\in (0,\infty)$.
It is not difficult to verify that $\phi$ is strictly decreasing, convex, and $\phi(1)=0$.
Then when $\lambda =0$, $\rho_i=\frac 1N$, the extreme point $\rho_i=\frac 1N$ is the unique minimum point such that   $I(\rho)=0$.
\end{proof}

Due to convexity of $I(\rho)$, for any $C>0$ there exists $c<1/N$, such that 
$\inf_{0<\min_i(\rho_i)\le c}I(\rho)\ge C$. On the other hand, we also know that the exact solution preserves 
energy, which means that 
$\rho(t)\in B=\{\rho\in \mathcal P_o(G) \; | \; \beta I(\rho)\le 
\mathcal H_0-\min_{\rho}(\mathcal V(\rho)+\mathcal W(\rho))\},$ 
where $\min_{\rho}(\mathcal V(\rho)+\mathcal W(\rho))<\infty$. 
Denote $M:= \mathcal H_0-\min_{\rho}(\mathcal V(\rho)+\mathcal W(\rho)).$
Thus, if we can find an upper bound $c$ such that $I(\rho)\ge \frac { M}{\beta}$, then $c$ will
be a lower bound for the exact solution $\rho(t), t\ge 0$.
Since 
$$I(\rho)\ge \min_{i\le N-1}\widetilde\omega_{i{i+1}}\sum_{i=1}^N(\log(\rho_i)-\log(\rho_{i+1}))(\rho_i-\rho_j),$$
the condition that $\sum_{i=1}^N(\log(\rho_i)-\log(\rho_{i+1}))(\rho_i-\rho_j))\ge 
\frac 1{ \min_{i\le N-1}\widetilde\omega_{i{i+1}}} \frac {M}{\beta} $ ensures $I(\rho)\ge \frac { M}{\beta}$. 
The following result gives the anticipated lower bound, and its proof is given in the Appendix at the end, where
we assume that $\widetilde \omega_{i,{i+1}}=1$ for simplicity. 

%
\begin{prop}\label{low-per}
Let $\min_i(\rho_i^0)<\frac 1N$. Then it holds that 
$$\sup\limits_{t\ge0}\min\limits_{i\le N}\rho_i(t)\ge  \min( \frac 12\min_i \rho_i^0, 
\frac 1{1+N\exp(\frac { M(N-1)([\frac {N-1}2]+1)}{\beta})}).$$
\end{prop}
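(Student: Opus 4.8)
The plan is to combine the energy identity with the fact that the discrete Fisher information diverges at the boundary of the simplex. By Proposition~\ref{well-dhs}(ii) the energy is conserved along the flow, so for every $t\ge 0$
\[
\beta I(\rho(t))\le \mathcal H_0-\min_{\rho}\bigl(\mathcal V(\rho)+\mathcal W(\rho)\bigr)=M;
\]
equivalently $\rho(t)$ stays in the sublevel set $B=\{\rho\in\mathcal P_o(G):I(\rho)\le M/\beta\}$, which is compactly contained in the open simplex by the convexity analysis of Lemma~\ref{con-per} (and the ensuing divergence of $I$ near $\partial\mathcal P(G)$). It therefore suffices to exhibit an explicit $c$ such that $\min_i\rho_i\le c$ forces $I(\rho)\ge M/\beta$: the trajectory can then never enter $\{\min_i\rho_i\le c\}$ without breaking the energy bound, so $c$ is a lower bound for $\min_i\rho_i(t)$ valid for every $t\ge0$, which in particular bounds $\sup_{t\ge0}\min_i\rho_i(t)$ from below. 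The truncation by $\tfrac12\min_i\rho_i^0$ only records that this threshold need never be taken above the initial minimum.

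\textbf{Reduction and telescoping.} With $\widetilde\omega_{i,i+1}=1$, the logarithmic weight turns the Fisher information into $I(\rho)=\sum_{i=1}^N(\log\rho_i-\log\rho_{i+1})(\rho_i-\rho_{i+1})$, a sum of nonnegative terms. Let $m=\min_i\rho_i$ be attained at a node $k_0$. Since the masses sum to $1$ and $m<1/N$, some node $k_1$ carries $\rho_{k_1}\ge (1-m)/(N-1)$. I would join $k_0$ to $k_1$ by the shorter arc $\Gamma$ of the cycle, which has at most $[\frac{N-1}{2}]+1$ edges, and telescope the logarithm along it:
\[
\log\frac{\rho_{k_1}}{m}=\sum_{(j,j+1)\in\Gamma}\bigl(\log\rho_j-\log\rho_{j+1}\bigr)\ge \log\frac{1-m}{Nm},
\]
so that the arc carries an amount of logarithmic variation that diverges as $m\to0$.

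\textbf{The crux.} The heart of the argument --- and the step I expect to be hardest --- is to transfer this divergent logarithmic variation onto the product-type sum $\sum_{\Gamma}(\log\rho_j-\log\rho_{j+1})(\rho_j-\rho_{j+1})$, since a large logarithmic increment across an edge may a priori be paired with a tiny increment of $\rho$. The tool I would use is the logarithmic-mean representation $\rho_j-\rho_{j+1}=\xi_j(\log\rho_j-\log\rho_{j+1})$, with $\xi_j$ between $\rho_j$ and $\rho_{j+1}$, combined with an averaging (pigeonhole) step over the at most $[\frac{N-1}{2}]+1$ edges of $\Gamma$ and a factor $N-1$ obtained by comparing the density differences along $\Gamma$ with the available mass $\rho_{k_1}\ge(1-m)/(N-1)$. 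The target of the estimate is
\[
I(\rho)\ge\frac{1}{(N-1)\bigl([\frac{N-1}{2}]+1\bigr)}\,\log\frac{1-m}{Nm}.
\]
Tracking the two combinatorial constants accurately through the arc/averaging argument is the main technical burden, which is why the proof is deferred to the Appendix.

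\textbf{Conclusion.} Granting this estimate, the condition $I(\rho)\ge M/\beta$ is guaranteed as soon as $\log\frac{1-m}{Nm}\ge K$, where $K=\frac{M(N-1)([\frac{N-1}{2}]+1)}{\beta}$; solving $\frac{1-m}{Nm}\ge e^{K}$ gives exactly $m\le\frac{1}{1+N\exp(K)}$. Hence the flow cannot reach densities below $\frac{1}{1+N\exp(K)}$ without violating the energy identity, and intersecting this with the initial threshold $\tfrac12\min_i\rho_i^0$ yields the asserted bound.
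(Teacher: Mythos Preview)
Your overall strategy---energy conservation forces $\beta I(\rho(t))\le M$, so it suffices to find $c$ with $\min_i\rho_i\le c\Rightarrow I(\rho)\ge M/\beta$---is exactly the paper's. The difference lies entirely in how the quantitative lower bound on $I$ is obtained, and there your ``crux'' step is a genuine gap.

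Your sketch proposes to telescope $\log\rho$ along a short arc $\Gamma$ and then transfer the divergent logarithmic variation onto the Fisher sum via the logarithmic-mean identity $\rho_j-\rho_{j+1}=\xi_j(\log\rho_j-\log\rho_{j+1})$ together with pigeonhole. The difficulty you flag is real and your tools do not overcome it: pigeonhole hands you an edge with large $|\log\rho_j-\log\rho_{j+1}|$, but $\xi_j$ lies between $\rho_j$ and $\rho_{j+1}$, and nothing in a direct argument prevents both of these from being of order $m^{\alpha}$ for some $\alpha\in(0,1)$. On such an edge the Fisher contribution is $\xi_j(\log\rho_j-\log\rho_{j+1})^2=O(m^{\alpha}(\log m)^2)$, which vanishes rather than diverges. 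The ``factor $N-1$ from the available mass $\rho_{k_1}$'' you invoke lives at the far end of the arc and does not control the intermediate densities where the big log-jump may sit; nor do you know the density is monotone along $\Gamma$, so pigeonholing on $|\rho_j-\rho_{j+1}|$ instead has the same problem in reverse.

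The paper resolves this by a completely different route: it minimizes $I$ on the slice $\{\rho_1=c\}$ via Lagrange multipliers, and proves that the constrained minimizer is symmetric about the antipode of $a_1$, that $\rho_i$ is increasing along each half-arc away from $a_1$, and---crucially---that the ratios $\rho_{i+1}/\rho_i$ are \emph{decreasing} along the arc. This last structural fact is exactly what your direct argument lacks: at the minimizer the smallest ratio (hence the largest usable $\log$-factor) occurs at the edge adjacent to the \emph{largest} density $\rho^*_{[\frac{N-1}{2}]+1}$, where the companion factor $\rho^*_{[\frac{N-1}{2}]+1}-c\ge\frac{1-c}{N-1}-c$ is bounded below. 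The constraint $\sum_i\rho_i=1$ then forces the smallest ratio to satisfy $k\ge\bigl(\frac{1-c}{2c[\frac{N-1}{2}]}\bigr)^{1/[\frac{N-1}{2}]}$, and plugging these two facts into the single edge already gives $I\ge 2\log(k)\bigl(\frac{1-c}{N-1}-c\bigr)$, from which the stated threshold follows.

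In short: your reduction is right, your target inequality is correct, but the path you outline does not reach it. The missing idea is the monotonicity of ratios at the constrained minimizer, which the paper extracts from the Euler--Lagrange system $\sum_{l\in N(j)}\phi(\rho_l/\rho_j)=\lambda$ with $\phi(t)=1-t-\log t$.
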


\begin{proof}
See the Appendix.
\end{proof}

\subsection{Lower bound for aperiodic structure}
Here we consider the case of an aperiodic graph (e.g., as when we have Neumann boundary conditions), and look for
the extreme points of $I(\rho)$ under the constraint $\sum_{i=1}^N\rho_{i}=1$.
%
We denote the boundary point set by $V_B$, i.e., if $a \in V_B$, then there exists only one edge connecting with other points.  
The Fisher information term now is 
\begin{align}\label{FishInfoAp}
I(\rho)=\sum_{i=1}^{N-1}\widetilde \omega_{i,i+1}(\log(\rho_{i})-\log(\rho_{i+1}))(\rho_{i}-\rho_{i+1}).
\end{align}
Similarly to Lemma \ref{con-per}, we have strict convexity of $I(\rho)$.

\begin{lm}
$I(\rho)$ in \eqref{FishInfoAp}
is strictly convex on $\mathcal P_o(G)$ and achieves its unique minimum at the uniform distribution. 
\end{lm}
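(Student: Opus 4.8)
The plan is to mirror the proof of Lemma \ref{con-per} almost verbatim, adjusting only for the absence of the wrap-around edge between nodes $a_N$ and $a_1$. First I would compute the Hessian of $I(\rho)$ in \eqref{FishInfoAp}. Because the sum now runs only over the $N-1$ edges of a path, the diagonal entries at the two boundary vertices $a_1$ and $a_N$ pick up a single neighbor term rather than two; concretely $\frac{\partial^2}{\partial\rho_1^2}I=\widetilde\omega_{1,2}\frac{1}{\rho_1^2}(\rho_1+\rho_2)$ and $\frac{\partial^2}{\partial\rho_N^2}I=\widetilde\omega_{N-1,N}\frac{1}{\rho_N^2}(\rho_{N-1}+\rho_N)$, while the interior diagonal entries and all off-diagonal entries are exactly as in Lemma \ref{con-per}.

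The key step is to reorganize the quadratic form by summation by parts into a single edge-wise sum of squares,
\[
\sigma^T\text{Hess}\,I(\rho)\sigma=\sum_{i=1}^{N-1}\widetilde\omega_{i,i+1}(\rho_i+\rho_{i+1})\Big(\frac{\sigma_i}{\rho_i}-\frac{\sigma_{i+1}}{\rho_{i+1}}\Big)^2\ge 0,
\]
which gives positive semidefiniteness at once since $\rho\in\mathcal P_o(G)$ and $\widetilde\omega_{i,i+1}>0$. For strict convexity on $T_\rho\mathcal P_o(G)$, suppose a unit vector $\sigma^*$ annihilates this form. Then $\frac{\sigma^*_i}{\rho_i}=\frac{\sigma^*_{i+1}}{\rho_{i+1}}$ for every edge $(i,i+1)$, and because the path is connected this forces $\frac{\sigma^*_i}{\rho_i}$ to equal a common constant $c$ independent of $i$. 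The tangency constraint then gives $\sum_{i=1}^N\sigma^*_i=c\sum_{i=1}^N\rho_i=c=0$, whence $\sigma^*\equiv 0$, contradicting ${\sigma^*}^T\sigma^*=1$.

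For the minimizer I would observe that each summand of \eqref{FishInfoAp} is nonnegative because $t\mapsto\log t$ is increasing, so $I(\rho)\ge 0$ with equality precisely when $\rho_i=\rho_{i+1}$ across every edge; connectedness of the path then forces $\rho$ to be the uniform distribution $\rho_i=1/N$, at which $I=0$. Alternatively, a Lagrange-multiplier computation as in Lemma \ref{con-per}, using the same function $\phi(t)=1-t-\log(t)$, identifies the critical point, the only change being that each boundary vertex contributes a single $\phi$-term. The strict convexity established above then guarantees this minimizer is unique. I expect the only delicate point to be the bookkeeping of the boundary diagonal entries in the Hessian; once the summation-by-parts identity is written edge-wise, the connectedness of the path makes both the strict convexity and the uniqueness arguments go through unchanged from the periodic case.
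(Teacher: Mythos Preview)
Your proposal is correct and essentially coincides with the paper's approach: the paper gives no separate proof for this lemma, merely stating ``Similarly to Lemma \ref{con-per}'', so mirroring that argument with the obvious boundary adjustments is exactly what is intended. Your direct observation that each summand $(\log\rho_i-\log\rho_{i+1})(\rho_i-\rho_{i+1})\ge 0$ is a minor simplification over the Lagrange-multiplier route used in Lemma \ref{con-per}, but otherwise the two arguments are the same.
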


The proof of the following lower bound estimate is also given in the Appendix, 
where for simplicity we assume that $\widetilde \omega_{ii+1}=1$.

\begin{prop}\label{low-new}
Let $\min_i(\rho_i^0)<\frac 1N$. Assume that $\kappa\le N-1$ is the number of nodes in $V_B$, $d_{max}$ is the 
largest distance\footnote{The distance $d_{ij}$ between two nodes $a_i$ and $a_j$ is the smallest number
of edges connecting $a_i$ and $a_j$.}
between two nodes in $V_B$. Then it holds that 
$$\sup_{t}\min_{i}\rho_{i}(t)\ge 
\min\Big(\frac 12 \min_i(\rho_i(0)),\frac {1}{1+\kappa(d_{max}-1)\exp(2\frac {M(d_{max}-1)(N-1)}{\beta})}\Big),$$
where $\kappa$ is the number of nodes in $V_B$.
\end{prop}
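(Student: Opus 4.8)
The plan is to mirror the argument for the periodic case (Proposition~\ref{low-per}), replacing the homogeneous geometry of the cycle by the boundary geometry of the aperiodic graph. The starting point is energy conservation. Since $\beta>0$ and $\widetilde\theta_{ij}=\Theta^L$ vanishes only when $\rho_i=\rho_j=0$, Proposition~\ref{well-dhs}(vi) gives $T^*=\infty$, and Proposition~\ref{well-dhs}(ii) yields $\mathcal H(\rho(t),S(t))=\mathcal H(\rho^0,S^0)=:\mathcal H_0$ for all $t$. Because the kinetic term $K(S,\rho)=\frac12\langle\nabla_G S,\nabla_G S\rangle_{\theta(\rho)}\ge 0$, we obtain $\beta I(\rho(t))\le \mathcal H_0-\min_\rho(\mathcal V(\rho)+\mathcal W(\rho))=M$, i.e. $I(\rho(t))\le M/\beta$ for every $t$. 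Thus the whole trajectory is trapped in the sublevel set $\{\rho\in\mathcal P_o(G):I(\rho)\le M/\beta\}$, and it suffices to prove the static implication: if $\min_i\rho_i$ falls below the claimed threshold, then $I(\rho)>M/\beta$.

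The heart of the matter is a lower bound for $I(\rho)$ in \eqref{FishInfoAp} in terms of $m:=\min_i\rho_i$. First I would fix a minimizing node $i_0$ and note that, because $\sum_i\rho_i=1$, at least one node carries density $\ge 1/N$. I would connect $i_0$ to such a node by a shortest path; in the tree-like aperiodic graphs considered, the longest geodesic joins two leaves, i.e. two nodes of $V_B$, so every such path has length at most $d_{max}$. Since each summand $(\log\rho_i-\log\rho_j)(\rho_i-\rho_j)$ is nonnegative, $I(\rho)$ dominates the sum over the edges of this path. The key analytic fact is that moving from density $m$ up to a density of order $1/N$ forces this path-sum to grow like $\log(1/m)$: writing $a=\max(\rho_i,\rho_j)$, $b=\min(\rho_i,\rho_j)$, $r=a/b$, one has $(\log a-\log b)(a-b)=b(r-1)\log r$, which is controlled from below once the larger density on the ``ascending'' edge is bounded away from $0$ by the normalization. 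I would then convert this into control of the ratios $\rho_j/\rho_{i_0}$: bounding $\log\rho_j-\log\rho_{i_0}$ by summing the per-edge increments along a path of length $\le d_{max}-1$, with the $N-1$ tree edges entering the combinatorial count, yields $\rho_j/\rho_{i_0}\le \exp\!\big(2M(d_{max}-1)(N-1)/\beta\big)$. Organizing the identity $\sum_{j}\rho_j=1$ over the $\kappa$ boundary nodes, using $N-1\le\kappa(d_{max}-1)$, and dividing by $\rho_{i_0}$ finally produces the stated form $\rho_{i_0}\ge \big(1+\kappa(d_{max}-1)\exp(\cdots)\big)^{-1}$.

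To pass from this static estimate to the dynamic bound I would run a first-exit (continuity) argument. Let $c_F$ denote the Fisher barrier just obtained and suppose, for contradiction, that $\min_i\rho_i(t)$ drops below $\min(\tfrac12\min_i\rho_i^0,c_F)$; by continuity there is a first time $t_1$ at which equality holds. If the active threshold is $c_F$, then just past $t_1$ the minimal density is below $c_F$, forcing $I(\rho)>M/\beta$ and contradicting the conservation bound. The alternative threshold $\frac12\min_i\rho_i^0$ is needed precisely in the regime where $c_F$ already exceeds the initial minimum (e.g. for large $\beta$, where $c_F$ is close to $1/N$): there the barrier argument cannot be started from $\rho^0$, and the fraction-of-the-initial-minimum bound takes over, which is why the final estimate is the minimum of the two quantities. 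Strict convexity of $I$ and its vanishing only at the uniform distribution (the strict-convexity lemma stated just after \eqref{FishInfoAp}) guarantee that the sublevel sets are the right objects and that the barrier is genuinely repelling, as in Proposition~\ref{well-dhs}(vi).

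The main obstacle I anticipate is the accumulation of Fisher information along a path on which the density may be small at \emph{every} node. The per-edge identity $(\log a-\log b)(a-b)=(\log a-\log b)^2 L(a,b)$ degrades because the logarithmic mean $L(a,b)$ can be as small as $m$, so a naive Cauchy--Schwarz bound only yields $I(\rho)\gtrsim m(\log(1/m))^2$, useless as $m\to 0$. The correct growth $I(\rho)\gtrsim \log(1/m)$ must instead be extracted by exploiting the global constraint $\sum_i\rho_i=1$ (which pins the endpoint density of the path away from $0$) together with the non-monotone density profile, handling separately the ``clustered'' configurations in which the minimizing node sits inside a region of uniformly small density. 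The secondary difficulty, essential only for the constants, is purely geometric: one must replace the symmetric radius $[\frac{N-1}{2}]+1$ of the cycle by the correct path lengths of the aperiodic graph, governed by the distances between the $\kappa$ boundary nodes and their maximum $d_{max}$.
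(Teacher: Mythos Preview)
Your proposal has a genuine gap, and you essentially identify it yourself in the ``main obstacle'' paragraph: from $I(\rho)\le M/\beta$ alone you cannot extract a per-edge bound on $|\log\rho_i-\log\rho_j|$, because the summand is $(\log\rho_i-\log\rho_j)(\rho_i-\rho_j)$ and $\rho_i-\rho_j$ may be arbitrarily small. Your sentence ``bounding $\log\rho_j-\log\rho_{i_0}$ by summing the per-edge increments \dots\ yields $\rho_j/\rho_{i_0}\le\exp(2M(d_{max}-1)(N-1)/\beta)$'' is exactly the step that does not go through for an arbitrary $\rho$ in the sublevel set. You say the ``correct growth must instead be extracted by exploiting the global constraint \dots\ handling separately the clustered configurations,'' but you do not supply a mechanism for doing so; without one the static implication is unproven.

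The paper does not try to bound $I(\rho)$ from below for a generic $\rho$ with $\min_i\rho_i=c$. Instead it studies $\inf_{\rho_i=c}I(\rho)$ via the Lagrange multiplier system $\sum_{l\in N(j)}\phi(\rho_l/\rho_j)=\lambda$, $\phi(t)=1-t-\log t$, and proves structural facts about the \emph{minimizer}: by an induction on $N$ (splitting into the cases where the constrained node $a_i$ is interior or boundary) it shows $\lambda>0$, that the density is monotone increasing along every path from $a_i$ to a leaf in $V_B$, and that the successive ratios $\rho_{l_{j+1}}/\rho_{l_j}$ are monotone decreasing along such paths. This monotonicity of ratios is the missing idea: it forces the smallest ratio $k$ to occur at the far end of a path, and then the normalization $\sum\rho_j=1$, organized over the $\kappa$ paths of length $\le d_{max}-1$ emanating from $a_i$, gives a lower bound $k\ge\big((1-c)/(c\kappa(d_{max}-1))\big)^{1/(d_{max}-1)}$. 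Only after this structural reduction can one write $\inf_{\rho_i=c}I(\rho)\ge \frac{1}{d_{max}-1}\log\!\big(\frac{1-c}{c\kappa(d_{max}-1)}\big)\big(\frac{1-c}{N-1}-c\big)$ and solve for $c$. Your energy-conservation and first-exit framing is correct, but the analytic core---the constrained-minimizer analysis replacing the failed direct estimate---is absent.
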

\begin{proof}
See the Appendix.
\end{proof}


\section{Time discretization of  Wasserstein-Hamiltonian systems on graph}\label{sec-disc}
Our purpose in this section is to look at the full discretization of Wasserstein-Hamiltonian systems.  In particular,
we discuss the time discretization of the (regularized) spatial discretizations \eqref{dhs} and \eqref{hpde} and our main goal
is to devise a symplectic discretization of the  Wasserstein-Hamiltonian flow \eqref{dhs} with
$\beta>0$.
Then, we will discuss general regularization strategies for \eqref{hpde}.

Presently, 
the discrete Lagrangian functional is 
$$L(\rho,\dot \rho)=\frac 12\<\nabla_G S,\nabla_G S\>_{\theta_{\rho}}h-\mathcal F(\rho)h-\beta I(\rho)h$$
with the constraint $\frac {d\rho}{d t}+div_G^{\theta}(\rho \nabla_G S)=0$,
and $$I(\rho)=\frac 12\sum_{i=1}^N\sum_{j\in N(i)}\widetilde \omega_{ij}|\log(\rho_i)-\log(\rho_j)|^2\widetilde \theta_{ij}(\rho).$$

We assume that $c_0 \le \omega_{ij}\le C_0, c_0\le \widetilde \omega_{ij}\le C_0$, for some positive numbers $c_0,C_0$,
and that $\max_{i}\mathbb V_i+\max_{ij}\mathbb W_{ij}\le M_0$. 
For simplicity, in this part we restrict consideration to  
$\theta_{ij}(\rho)=\theta_{ij}^{A}(\rho)=\frac {\theta_{i}+\theta_{j}}2$, and
$\widetilde \theta_{ij}(\rho)=\theta_{ij}^{L}(\rho)=\frac {\rho_{i}-\rho_{j}}{\log(\rho_{i})-\log(\rho_{j})}$.
Denote the maximum numbers of edges connecting to a node with $E_{max}$, and let
$c$ be the uniform lower bound of  $\rho$ derived in Section \ref{sec-low}. Then, 
the uniform  upper bound estimate of $|{S_i-S_{j}}|$ can be obtained in the following way.

Recall $\mathcal H(\rho,S)=K(S,\rho)+\mathcal F(\rho),$ where 
$ K(S,\rho):=
\frac 12 \<\nabla_G S, \nabla_G S\>_{\theta(\rho)}$ and $\mathcal F(\rho):=\beta I(\rho)+\mathcal V(\rho)+\mathcal W(\rho).$
Due to the conservation of $\mathcal H$, we have 
\begin{align*}
K(S,\rho)+\beta I(\rho)& :=\frac 14 \sum_{i}\sum_{j\in N(i)}\omega_{ij} {|S_{i}(t)-S_{j}(t)|^2}\theta_{ij}(\rho(t))
\\
&\quad +\beta\frac 12\sum_{i,j=1}^N\widetilde \omega_{ij}(\log(\rho_{i}(t))-\log(\rho_{j}(t)))^2\widetilde \theta_{ij}(\rho(t))\\
&\le \mathcal H_0-\min_{i}(\mathbb V_i+\sum_{j=1}^N\mathbb W_{ij}\rho_{j})=:M.
\end{align*}
Then we get 
\begin{align*}
\max_{i}|{S_i-S_{j}}|^2 &\le\frac {2M}{\min_{i,j}\omega_{ij}\min_{ij}\theta_{ij}(\rho(t))}\le  
\frac {2M}{c \min_{i,j}\omega_{ij}},\\
\max_{i}|\log(\rho_i)-\log(\rho_{j})|^2 &
\le\frac {M}{\min_{i,j}\widetilde \omega_{ij}\min_{ij}\theta_{ij}(\rho(t))}\le  \frac {M}{c \min_{i,j}\widetilde \omega_{ij}},
\end{align*}
where $c\ge \min\Big(\frac 12 \min_i\rho_i(0),
\frac {1}{1+\kappa(d_{max}-1)\exp(2\frac {M(d_{max}-1)(N-1)}{\min_{i,j}\widetilde \omega_{ij}\beta})}\Big)$ and $\kappa$ is the number of nodes in $V_B$.
Since $x-y\le \log(x)-\log(y)$ for $0<y\le x<1$, we also obtain 
\begin{align*}
\max_{i}|\rho_i-\rho_{j}|^2 &\le\frac {M}{\min_{i,j}\widetilde \omega_{ij}\min_{ij}\theta_{ij}(\rho(t))}\le  \frac {M}{c \min_{i,j}\widetilde \omega_{ij}}.
\end{align*}
The Lipschitz constant of $\mathcal H$, $Lip(\mathcal H)$, satisfies 
\begin{align*}
Lip(\mathcal H)\le \max_{i\le N}\Big(\big|\sum_{j\in N(i)}(S_i-S_j)\omega_{ij}\theta_{ij}(\rho)
\big|, \big|\frac 12\sum_{j\in N(i)}\omega_{ij}(S_i-S_j)^2\frac {\partial \theta_{ij}}{\partial \rho_{i}}+\beta \frac {\partial I} {\partial \rho_i}(\rho) \big|+M_0\Big).
\end{align*}

Then on the set $B=\{(S,\rho)| K(S,\rho)+I(\rho)\le M\}$, we have 
\begin{align*}
\Big\|\frac {\partial \mathcal H}{\partial S}\Big\|_{l^{\infty}}&\le E_{max}C_0\sqrt{\frac {2M}{cc_0}},\;
\Big\|\frac {\partial \mathcal H}{\partial \rho}\Big\|_{l^{\infty}}\le E_{max}C_0\Big(\frac 12\frac {M}{cc_0}+\beta  \sqrt{\frac M{cc_0}}+\beta \frac 1c+M_0\Big),\\
\Big\|\frac {\partial^2 \mathcal H}{\partial S^2}\Big\|_{l^{\infty}}&\le C_0,\;
\Big\|\frac {\partial^2 \mathcal H}{\partial \rho\partial S}\Big\|_{l^{\infty}}\le \frac 1{\sqrt {2}} C_0 \sqrt{\frac {M}{cc_0}},\;
\Big\|\frac {\partial^2 \mathcal H}{\partial \rho^2}\Big\|_{l^{\infty}}\le \beta C_0\big(\frac{1}{c}+\frac{1}{c^2}+M_0\big).
\end{align*}
By recursive calculations, we further get 
$\frac {\partial^n \mathcal H}{\partial \rho^n}\le \beta C_0((n-2)!(\frac 1c)^{{n-1}}+(n-1)!(\frac 1 c)^{n})$ 
for $n\ge 3$ and other partial derivatives bounded by $\frac {C_0} 2$ for $n=3$ and $0$ for $n\ge 4$.

\subsection{Symplectic methods}
Based on the positivity of the probability density in \eqref{dhs}, the constraint on $\rho$ can be rewritten 
as $S(t)=(-\Delta_{\rho(t)}^{\theta})^{\dag}\dot \rho(t)$, where $(-\Delta_{\rho(t)}^{\theta})^{\dag}$ is the 
pseudo-inverse of $-div_G^{\theta}(\rho\nabla_G(\cdot))$.
Thus we have the following equivalent forms
\begin{align*}
\mathcal L(\rho,\nabla_G S)&=\frac 12\<\nabla_G S,\nabla_G S\>_{\theta_{\rho}}-\mathcal F(\rho)= \frac 12 \<S, \Delta_{\rho(t)}^{\theta} S\>h-\mathcal F(\rho)\\
&= \frac 12 \<\nabla_G ((-\Delta_{\rho(t)}^{\theta})^{\dag}\dot \rho(t)),\nabla_G ((-\Delta_{\rho(t)}^{\theta})^{\dag}\dot \rho(t))\>_{\theta_{\rho}}-\mathcal F(\rho)\\
&= \frac 12 \<(-\Delta_{\rho(t)}^{\theta})^{\dag}\dot \rho(t)), (-\Delta_{\rho(t)}^{\theta})(-\Delta_{\rho(t)}^{\theta})^{\dag}\dot \rho(t)\>-\mathcal F(\rho)=: L(\rho,\dot \rho),
\end{align*}
where $\mathcal F(\rho)=\beta I(\rho)+\mathcal V(\rho)+\mathcal W(\rho), \beta>0.$

Consider the action integral $\mathcal S(\rho)=\int_{t_0}^{t_1}L(\rho(t),\dot \rho(t))dt$ 
among all curves $\rho(t)$ connecting two given probability densities 
$\rho(t_0)=\rho^0$ and $\rho(t_1)=\rho^1$, and 
let us consider the approximation of the action integral between $0$ and $T$, connecting
$\rho(0)$ and $\rho(T)$:
\begin{align*}
\mathcal S_\tau(\{\rho^n\}_{n=0}^N)=\sum_{n=0}^{N-1}L_\tau(\rho^n,\rho^{n+1}),
\end{align*} 
where $L_\tau (\rho^n,\rho^{n+1})$ is an approximation of 
$\int_{t_n}^{t_{n+1}}L(\rho(s),\dot \rho(s))ds$ with given 
$T=t_N$ and $\tau=t_{n+1}-t_n$. 
Then, letting $\frac {\partial \mathcal S_\tau }{\partial \rho^n}=0$, 
for $n=1,\cdots,N-1$, we get the discrete Euler-Lagrange equation
\begin{align*}
\frac {\partial  L_\tau}{\partial x}(\rho^n,\rho^{n+1})+\frac {\partial   L_\tau}{\partial y}(\rho^{n-1},\rho^{n})=0,
\end{align*}
where  $\frac {\partial  L_\tau}{\partial x}$ and $\frac {\partial  L_\tau}{\partial y}$ refer to the partial derivatives 
with respect  to the first and second argument. 

By introducing the discrete momenta via the discrete 
Legendre transformation
$p^n=-\frac {\partial L_\tau}{\partial x}(\rho^n,\rho^{n+1})$,
we can get $d\mathcal S_\tau=p^Nd\rho^N-p^0d\rho^0$. $\mathcal S_\tau$ is also called symplecticity generating function.
This implies the symplecticity of the map $(p^0,\rho^0)\to (p^N,\rho^N)$ (see e.g. \cite[Chapter VI]{HLW06}).
Indeed, we get 
\begin{align*}
p^n=-\frac {\partial L_\tau}{\partial x}(\rho^n,\rho^{n+1}),\;
p^{n+1}=\frac {\partial L_\tau}{\partial y}(\rho^n,\rho^{n+1}).
\end{align*}

Let us consider the first time step approximation.
Assume that we use some numerical integration formula, and get
$L_{\tau}(\rho^0,\rho^1)=\tau \sum_{i=1}^sb_iL(u(c_i\tau),\dot u(c_i\tau)),$
where $0\le c_1<\dots < c_s\le 1$ and $u(t)$ is the collocation polynomial of degree $s$ with $u(0)=\rho^0$ and $u(\tau)=\rho^1$.
Then we can rewrite the above approximation as 
\begin{align*}
L_{\tau}(\rho^0,\rho^1)=\tau \sum_{i=1}^sb_iL(\Phi^i,\dot \Phi^i),\\
\Phi^i= \rho^0+h\sum_{j=1}^s a_{ij}\dot \Phi^j,
\end{align*} 
subject to the constraint $\rho_1=\rho_0+h\sum_{i=1}^s b_i \dot \Phi^i$. 
We assume that all the $b_i$ are non-zero and that their sum equals 1.
By the Lagrange multiplier method, the extremum point satisfies 
\begin{equation}\label{sym}\begin{split}
S^1&= S^0-\tau \sum_{i=1}^s b_i \frac {\partial \mathcal H(\Xi^i,\Phi^i)}{\partial \rho},\\\nonumber 
\rho^1&= \rho^0+\tau \sum_{i=1}^s b_i \frac {\partial \mathcal H(\Xi^i,\Phi^i)}{\partial S},\\\nonumber 
\Xi^i&= S^0-\tau \sum_{j=1}^s \widetilde a_{ij} \frac {\partial \mathcal H(\Xi^j,\Phi^j)}{\partial \rho},
\\\nonumber 
\Phi^i&= \rho^0+\tau \sum_{ j=1}^s a_{ij} \frac {\partial \mathcal H(\Xi^j,\Phi^j)}{\partial S}
\end{split}\end{equation} 
where the coefficients satisfy the condition
$\widetilde a_{ij}b_i+a_{ji}b_{j}=b_ib_j$, of partitioned
Runge Kutta symplectic methods for the Wasserstein-Hamiltonian system \eqref{dhs}. 

\begin{ex}\label{Fis-eul}
Symplectic Euler method ($\widetilde a_{ij}=1$, $a_{ji}=0$, $b_i=b_j=1$, $s=1$)
\begin{align*}
\rho^{n+1}_i&= \rho^n_i +\frac {\partial \mathcal H(S^{n+1},\rho^n)}{\partial S}\tau,\\
&=\rho^n_i-\sum_{j\in N(i)}\omega_{ij}(S_j^{n+1}-S_i^{n+1})\theta_{ij}(\rho^n)\tau\\
S^{n+1}_i&= S^n_i-\frac {\partial \mathcal H(S^{n+1},\rho^n)}{\partial \rho}\tau ,\\
&=  S^n_i-\frac 12\sum_{j\in N(i)}\omega_{ij}(S_i^{n+1}-S_j^{n+1})^2 \frac {\partial \theta_{ij}(\rho^n)}{\partial \rho_i}\tau- \frac {\partial \mathcal F(\rho^n)}{\partial \rho_i}\tau,
\end{align*}
where $\mathcal F(\rho):=\beta I(\rho)+\mathcal V(\rho)+\mathcal W(\rho).$
\qed
\end{ex}

In the following, we focus on the case of symplectic Runge--Kutta methods, i.e., 
$\widetilde a_{ij}=a_{ij}$. With minor modifications, all results hold for the  
partitioned Runge--Kutta symplectic methods. 
\begin{tm}\label{sym-pro}
Assume that $G=(V,E,\Omega)$ is a connected weighted graph and that $\min_{i\le N}\rho^0_i>0$.
Then the symplectic Runge--Kutta scheme \eqref{sym} enjoys the following properties.
\begin{enumerate}[label=(\roman*)]
\item It preserves mass:
$$\sum_{i=1}^N\rho_i^n= \sum_{i=1}^N\rho_i^0.$$
\item It preserves symplectic structure: $d\rho^n \wedge dS^n =d\rho^0 \wedge dS^0$.
\item Assuming that the scheme is symmetric, then it is time reversible:  
if $(\rho^n, S^n)$ is the solution of the full discretization, then $(\rho^{-n}, -S^{-n})$ is also the 
solution of  the full discretization.
\item It is time transverse (gauge) invariant: if $\mathbb V^{\mathbb \alpha}=\mathbb V-\mathbb \alpha$, 
then $S^{\mathbb \alpha}=S+\mathbb \alpha t$ is the solution of the scheme with linear potential $\mathbb V^{\mathbb \alpha}$.
\item A time invariant $\rho^*\in \mathcal P_o(G)$ and ${S^*}^n=-v n\tau $ form an interior stationary 
solution of the symplectic scheme if and only if $(\rho^*,S^*)$ is the critical point of 
$\mathcal H(\rho,S)$ and $v=\mathcal H(\rho^*,S^*)+\frac 12 \sum_{i=1}^N\sum_{j=1}^N \mathbb W_{ij}\rho^*_i\rho^*_j$. 
\item When $\frac M\beta$ is small enough, the scheme almost preserves the Hamiltonian up to time 
$T=\mathcal O(\tau^{-r})$:
$$\mathcal H(S^n,\rho^n)=\mathcal H(S^0,\rho^0)+\mathcal O(\tau^r),$$
where $r$ is the order of the symplectic numerical scheme. 
\end{enumerate}
\end{tm}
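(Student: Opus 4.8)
The plan is to read off properties (i)--(v) as discrete counterparts of the continuous statements in Proposition~\ref{well-dhs}, using the symplectic/variational construction of \eqref{sym} together with standard results on (partitioned) Runge--Kutta methods \cite[Chapters V, VI, IX]{HLW06}, and to concentrate the real work on the long-time energy estimate (vi).

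For mass conservation (i), I would sum the $\rho$-update in \eqref{sym} over the nodes. Because $\frac{\partial \mathcal H}{\partial S_i}=-\sum_{j\in N(i)}\omega_{ij}(S_j-S_i)\theta_{ij}(\rho)$ is a discrete divergence, the symmetries $\omega_{ij}=\omega_{ji}$ and $\theta_{ij}=\theta_{ji}$ force $\sum_i\frac{\partial \mathcal H}{\partial S_i}=0$ for every stage value; hence $\sum_k\rho^{1}_k=\sum_k\rho^0_k$, and induction over the steps gives (i). For symplecticity (ii), I would invoke the generating-function identity $d\mathcal S_\tau=p^Nd\rho^N-p^0d\rho^0$ derived just above the statement, or equivalently note that $\widetilde a_{ij}b_i+a_{ji}b_j=b_ib_j$ is exactly the symplecticity condition, so (ii) is immediate from \cite[Chapter VI]{HLW06}.

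For (iii)--(v) I would exploit two structural features of $\mathcal H(\rho,S)=\tfrac12\langle\nabla_G S,\nabla_G S\rangle_{\theta(\rho)}+\mathcal F(\rho)$: it is even in $S$, and it depends on $S$ only through differences $S_i-S_j$. Evenness in $S$ makes the Hamiltonian vector field reversible under $(\rho,S)\mapsto(\rho,-S)$, so for a symmetric method this reversibility transfers to the one-step map and yields (iii) by the standard argument of \cite[Chapter V]{HLW06}. For gauge invariance (iv), the difference-only dependence gives $\frac{\partial \mathcal H^\alpha}{\partial S}=\frac{\partial \mathcal H}{\partial S}$ and $\frac{\partial \mathcal H^\alpha}{\partial \rho_i}=\frac{\partial \mathcal H}{\partial \rho_i}-\alpha$ (using $\sum_j\rho_j=1$); substituting $S^\alpha=S+\alpha n\tau$ with correspondingly shifted stages into \eqref{sym}, I would check that the $\rho$-equations are unchanged and the $S$-equations acquire exactly the uniform shift $\alpha\tau$ per step. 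For the stationary solutions (v), inserting $\rho^n\equiv\rho^*$ and spatially constant $S^{*n}=-vn\tau$ makes $\frac{\partial \mathcal H}{\partial S}$ vanish (since $S_i-S_j=0$), so $\rho^{n+1}=\rho^n$ automatically, while $S^{n+1}-S^n=-\tau\sum_i b_i\frac{\partial \mathcal H}{\partial \rho}$ equals $-v\tau$ uniformly if and only if $\frac{\partial \mathcal H}{\partial \rho_i}$ is independent of $i$, i.e.\ $\rho^*$ is a critical point on the simplex with multiplier $v$; the stated value of $v$ then follows by evaluating $\mathcal H$ at $(\rho^*,S^*)$.

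The hard part will be (vi), which I would prove by backward error analysis \cite[Chapter IX]{HLW06}. The derivative bounds collected at the beginning of Section~\ref{sec-disc} --- in particular $\frac{\partial^n\mathcal H}{\partial\rho^n}\le\beta C_0((n-2)!\,c^{-(n-1)}+(n-1)!\,c^{-n})$ together with the $l^\infty$ bounds on $\partial_S\mathcal H$ and $\partial_\rho\mathcal H$ --- show that on the energy sublevel set $B$ the Hamiltonian is real-analytic with a radius controlled by the density lower bound $c$ from Propositions~\ref{low-per} and \ref{low-new}. Requiring $\tfrac M\beta$ small keeps $c$ bounded away from $0$, so these factorial-over-$c^{\,n}$ bounds yield a genuine analyticity strip, and it keeps the numerical trajectory inside $B$. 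Granting this, the standard construction produces a modified Hamiltonian $\widetilde{\mathcal H}_\tau=\mathcal H+\mathcal O(\tau^r)$ preserved by a truncation of the modified equation up to an exponentially small remainder; since the symplectic map stays $\mathcal O(\tau^r)$-close to the flow of $\widetilde{\mathcal H}_\tau$ and the latter drifts only exponentially slowly, one obtains $\mathcal H(S^n,\rho^n)=\mathcal H(S^0,\rho^0)+\mathcal O(\tau^r)$ for all $n\tau\le T=\mathcal O(\tau^{-r})$. I expect the two delicate points to be (a) showing the numerical solution cannot leave $B$, coupling the a priori bounds with the lower-bound estimates of Section~\ref{sec-low} to keep each $\rho_i^n$ away from the boundary, and (b) propagating the analyticity bounds through the implicit stage equations of \eqref{sym} so that the modified-equation series has the convergence needed for the exponentially small estimate.
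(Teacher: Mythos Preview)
Your treatment of (i)--(v) is essentially what the paper does, only spelled out in more detail; the paper dispatches these in one or two lines each (mass from linearity of the constraint, symplecticity from $a_{ij}b_i+a_{ji}b_j=b_ib_j$, reversibility from $g$-reversibility plus symmetry, gauge invariance from $K(\rho,S)$ being even in $S$ and linearity of the potential, and stationary solutions from the KKT conditions).

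The genuine divergence is in (vi). You go straight to backward error analysis: analyticity on $B$, modified Hamiltonian $\widetilde{\mathcal H}_\tau$, exponentially small drift. The paper does \emph{not} argue this way inside the proof of the theorem. Instead it runs a direct bootstrap: writing the scheme as an $r$-th order one-step map, it accumulates the local truncation error to get
\[
|y_{N+i}(t_{K+1})-y^{K+1}_{N+i}|\le K\tau\,C_{r,M,c_0,C_0}(1+\beta)\Big(\frac1{c^{2r+1}}+1\Big)\tau^{r},
\]
and from this reads off that as long as $K\tau\lesssim c^{2r+2}\tau^{-r}$ the numerical density cannot drop below the exact lower bound $c$ coming from Section~\ref{sec-low}. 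Once the trajectory is confined to $B$ for times $T=\mathcal O(\tau^{-r})$, the energy statement follows by a further Taylor expansion. Only \emph{after} the theorem, in a separate subsubsection, does the paper carry out the backward error analysis you propose, and it packages the outcome as a Corollary about the modified energy $\widetilde{\mathcal H}$.

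Both routes are sound. The paper's direct argument is more elementary and makes transparent exactly how the time horizon $\mathcal O(\tau^{-r})$ arises from the density lower bound $c$; your route is stronger and, as you note, hinges on the two points you flag --- keeping $y^n$ in $B$ and controlling analyticity through the implicit stages --- which are precisely what the paper handles by its bootstrap and by the explicit extension of $f(S,\rho)$ to a complex neighbourhood of $B$.
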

\begin{proof}
Property (i) holds since this is a linear constraint.
Property (ii) can be verified by using the symplecticity condition $a_{ij}b_i+a_{ji}b_{j}=b_ib_j$. 
As far as (iii), since the exact flow of the original system $\Phi(y)=\Phi(S,\rho)$ is $g$-reversible, 
i.e., $g\circ \Phi=\Phi^{-1}\circ g$, with $g(S,\rho)=(-S,\rho)$, 
then since the one-step method $\Phi_{\tau}$ is symmetric, i.e, $\Phi_{\tau}\circ\Phi_{-\tau}=I$, then 
$\Phi_\tau$ is $g$-reversible, i.e., $g\circ \Phi_{\tau}=\Phi^{-1}_{\tau}\circ g$, and (iii) holds. 
Property (iv) holds because $K(\rho,S)$ is an even function of $S$ and the potential is linear.
To show Property (v), we only need to show that $\rho^*$ satisfies the Karush-Kuhn-Tucker 
conditions of optimality for minimization of
$\min_{\rho\in \mathcal P_o(G)}\mathcal H(\rho)=\min_{\rho\in \mathcal P_o(G)}(\beta I(\rho)+\mathcal V(\rho)+\mathcal W(\rho))$, 
which is done using the Lagrange multiplier method.

We next focus on the proof of (vi).
Rewrite the $r$-th order Runge--Kutta scheme as
\begin{align*}
y^1&= y^0+\tau \sum_{i=1}^s b_i f(\tilde y^i),\\
\tilde y^i&= y^0+\tau \sum_{j=1}^s a_{ij} f(\tilde y^j).
\end{align*} 
Assume that 
$y_0\in B=\{\rho\in \mathcal P_o(G) \; | \; \beta I(\rho)\le \mathcal H_0-\min_{\rho}(\mathcal V(\rho)+\mathcal W(\rho))\}$
and let $K$ be the smallest number such that $y
^{K+1}\notin B$ and  for some $j\le N$, $y^{K+1}_{N+j}=\min_{i=1}^N|y^{K+1}_{N+i}|=\alpha c, 0<\alpha<1$. 
By Taylor expansion, using recursion, we have 
\begin{align*}
|y_{N+i}(t_{K+1})-y^{K+1}_{N+i}|&\le |y_{N+i}(t_{K})-y^{K}_{N+i}| +C_{r,M,c_0,C_0}(1+\beta)(\frac {1}{c^{2r+1}}+1)\tau^{r+1}\\
&\le K\tau C_{r,M,c_0,C_0}(1+\beta)(\frac {1}{c^{2r+1}}+1)\tau^{r},
\end{align*}
which implies that for $i=1,\cdots,N$,
\begin{align*}
y_{N+j}(t_{K+1})\le  y^{K+1}_{N+j}-K\tau C_{r,M,c_0,C_0}(1+\beta)\frac {1+c^{2r+1}}{c^{2r+1}}\tau^{r}.
\end{align*}
Thus, before the time $K\tau\ge  \frac {c^{2r+2}}{2\tau^{r}C_{r,M,c_0,C_0}(1+\beta)}(1-\alpha)$,  
the lower bound of the original system is preserved by the numerical scheme. 
After $K\tau$, we can still write the scheme until the lower bound of the density goes to $0$.  
The solvability of the scheme requires the classical condition, 
$\max\Big(C_0,\frac 1{\sqrt {2}} C_0 \sqrt{\frac {M}{cc_0}}, \beta C_0\big(\frac{1}{c}+\frac{1}{c^2}+M_0\big)\Big)\tau 
\le \text{constant}$, where the constant only depends on the numerical method.
Due to the fact that if $T=\mathcal O(\tau^{-r}),$ the lower bound of the density is uniformly controlled by $c$,
we complete the proof of (vi) by using the Taylor expansion ot the energy.  
\end{proof}

\subsubsection{Backward Error Analysis}
In spite of point (vi) in Theorem \ref{sym-pro}, symplectic methods nearly preserve the Hamiltonian for
times much longer than $\mathcal{O}(\tau^{-r})$, since the 
backward error analysis allows for an exponentially small error between the symplectic scheme and
its modified equation. 
To apply the backward error analysis, we need to verify that the coefficients of the equation 
admit an analytic extension on the complex domain, which we do next.

By choosing the principle value of the logarithm of $z$ in $ \mathbb C/ \{0\}$, denoted by 
$\Log(z):=\log|z|+ i\Arg(z)$, it is known that $\Log(z)$ is analytic except along
the negative real axis.  Since $\frac 1{\rho_i}$ and $\log(\rho_i)$ can be extended to analytic complex 
functions for $\rho_i\in \mathbb C/ \{0\}$, we extend 
\begin{align*}
f(S,\rho)&:=(-\frac {\partial \mathcal H}{\partial \rho}, \frac {\partial \mathcal H}{\partial S})\\
&=\Big(-\frac 14\sum_{j\in N(i)}\omega_{ij}(S_i-S_{j})^2-\sum_{j\in N(i)}\widetilde \omega_{ij}(1-\frac{\rho_{j}}{\rho_i}-
\log(\frac {\rho_{j}}{\rho_{i}})),\\
&\qquad \frac 12\sum_{j\in N(i)}\omega_{ij}(S_i-S_j)(\rho_{i}+\rho_j)\Big)
\end{align*}
to a complex function in $\mathbb C^{2n}$ such that for any $y^0\in B$, $f(y)$ is analytic
in the neighborhood of $y^0$ and that there exists $R>0$ such that $$\|f(y)\|\le M_{c}, \;\text{for} \; \|y-y^0\|\le 2R.$$
This is applicable since we can choose $R\le \frac 14 {\text{dist}}(y_0,B)$ such that 
$$\min_{i=1}^N|y_{N+i}|=\min_{i=1}^N|\rho_{i}|\ge c,$$ 
and that 
\begin{align*}
\|f(y)\|_{l^{\infty}}
&\le  E_{max}C_0\Big(\frac 12\frac {M}{cc_0}+\beta  \sqrt{\frac M{cc_0}}+\beta \frac 1c+M_0\Big).
\end{align*}

Thus, the backward error analysis is applicable in our case. 
We first introduce the truncated modified differential equation of \eqref{dhs} with respect to an $r$-th
order numerical scheme,
\begin{align}\label{mod-hs}
\dot{\widetilde y}=F_{\mathcal N}(\widetilde y), \; F_{\mathcal N}(\widetilde y)=f(\widetilde y)+
\tau^rf_{r+1}(\widetilde y)+\cdots+\tau^{N-1}f_{\mathcal N}(\widetilde y)
\end{align}
with $\widetilde y(0)=y(0)$. It is well-known that the above modified equation 
is also a Hamiltonian system with the modified Hamiltonian 
$\widetilde {\mathcal H}(y)=\mathcal H(y)+\tau^r\mathcal H_{r+1}(y)+\dots+\tau^{ N-1}\mathcal H_{\mathcal N}(y)$.
According to \cite[Theorem 7.2 and Theorem 7.6]{HLW06}, 
we have that for the Runge-Kutta method, if $f(y)$ is analytic and $\|f(y)\|\le M_{c}$ in the complex ball $B_{2R}(y_0)$, 
then the coefficients $d_j$ in the Taylor expansion of the numerical method
$$\Phi_{\tau}(y)=y+\tau f(y)+\tau^2d_2(y)+\dots+\tau^jd_j(y)+\dots,$$
are analytic  and satisfy $\|d_j(y)\|\le C\frac {M_c^{j}}{R}$ in $B_R(y_0).$
If $\tau\le  {\tau_0}$ with $\tau_0\le C\frac R {M_c}$ for some constant $C>0$, then there exists 
$\mathcal N=\mathcal N(\tau)$ satisfying $\tau\mathcal N\le h_0$ such that 
\begin{align*}
\|\Phi_{\tau}(y^0)-\widetilde{\phi_{N,\tau}}(y^0)\|\le C\tau M_ce^{-\frac {\tau_0} {\tau}},
\end{align*}
where $y^1=\Phi_{\tau}(y^0)$ is the numerical solution and $\widetilde{\phi_{\mathcal N,\tau}}(y^0)$ is 
the exact solution of \eqref{mod-hs} at $t=\tau$.

As a consequence of the above results, the long-time energy conservation is obtained.
Assume that the numerical solution of the symplectic method $\Phi_{\tau}(y)$ stays in the compact set $B$, 
then there exists $R$, $\tau_0$ and $N(\tau_0)$ such that 
\begin{align*}
|\widetilde{\mathcal H}(y^n)-\widetilde{\mathcal H}(y^0)|\le n\tau M_ce^{-\frac {\tau_0}{\tau}},\\
|\mathcal H(y^n)-\mathcal H(y^0)|\le C\frac {M_c^{p+1}}{R^{p}}\tau^p,
\end{align*}

\begin{cor}
Under the same condition of Theorem \ref{sym-pro}, when $\frac M\beta$ is small enough, 
there exists $\tau_0$ small enough, $C_M>0$, and a modified energy $\widetilde{\mathcal H}$, $\mathcal{O}(\tau^r)$-close
to $\mathcal{H}$, 
such that for any $\tau<\tau_0$, $n\tau<T$,  
$$|\widetilde{\mathcal H}(S^n, \rho^n)-\widetilde{\mathcal H}(S^0,\rho^0)|\le n\tau C_Me^{-\frac {\tau_0}{\tau}}.$$
\end{cor}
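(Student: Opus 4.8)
The plan is to combine the three ingredients already prepared above --- the uniform lower bound on the density, the analyticity of the Wasserstein-Hamiltonian vector field, and the standard backward error estimate --- into a telescoping argument for the modified energy. First I would confirm that the numerical trajectory $y^n=(S^n,\rho^n)$ remains inside the compact set $B$ introduced above for all $n$ with $n\tau\le T=\mathcal O(\tau^{-r})$. This is exactly what part (vi) of Theorem \ref{sym-pro} provides: when $\frac M\beta$ is small enough, the discrete density stays above the uniform lower bound $c$ of Section \ref{sec-low}, so the trajectory never reaches the boundary where $\log(\rho_i)$ degenerates. Confinement to $B$ is what makes $f(S,\rho)=(-\frac{\partial\mathcal H}{\partial\rho},\frac{\partial\mathcal H}{\partial S})$ analytic on the complex ball $B_{2R}(y_0)$ with the uniform bound $\|f\|\le M_c$ derived above, the point being that $R$ stays bounded below and $M_c$ stays finite precisely because $c$ is bounded away from zero.

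With the trajectory confined to $B$, I would invoke the backward error analysis. Since the scheme is symplectic by Theorem \ref{sym-pro}(ii), the truncated modified equation \eqref{mod-hs} is itself Hamiltonian, with modified Hamiltonian $\widetilde{\mathcal H}=\mathcal H+\tau^r\mathcal H_{r+1}+\cdots+\tau^{\mathcal N-1}\mathcal H_{\mathcal N}$ that is $\mathcal O(\tau^r)$-close to $\mathcal H$. Applying \cite[Theorem 7.2 and Theorem 7.6]{HLW06} as above, there exist $R$, $\tau_0\le C R/M_c$, and $\mathcal N=\mathcal N(\tau)$ such that one step of the numerical method and one step $\widetilde{\phi_{\mathcal N,\tau}}$ of the exact modified flow differ by $\|\Phi_\tau(y)-\widetilde{\phi_{\mathcal N,\tau}}(y)\|\le C\tau M_ce^{-\tau_0/\tau}$, uniformly for $y\in B$.

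The remaining step is the telescoping estimate. Because $\widetilde{\mathcal H}$ is exactly conserved along the modified flow, at each step
\begin{align*}
\widetilde{\mathcal H}(y^{n+1})-\widetilde{\mathcal H}(y^n)
&=\widetilde{\mathcal H}(\Phi_\tau(y^n))-\widetilde{\mathcal H}(\widetilde{\phi_{\mathcal N,\tau}}(y^n))\\
&\le \mathrm{Lip}(\widetilde{\mathcal H})\,\|\Phi_\tau(y^n)-\widetilde{\phi_{\mathcal N,\tau}}(y^n)\|\le C_M\tau e^{-\tau_0/\tau},
\end{align*}
where $\mathrm{Lip}(\widetilde{\mathcal H})$ is finite on $B$ by the derivative bounds recorded above and $C_M$ absorbs $\mathrm{Lip}(\widetilde{\mathcal H})$, $C$, and $M_c$. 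Summing over $n$ consecutive steps yields $|\widetilde{\mathcal H}(y^n)-\widetilde{\mathcal H}(y^0)|\le n\tau C_Me^{-\tau_0/\tau}$, which is the asserted bound for $y^n=(S^n,\rho^n)$.

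The hard part will be Step 1, the confinement to $B$, since it hides a potential bootstrap: the exponentially small energy drift is what one wants in order to keep the density bounded below, yet that same lower bound is what licenses the error estimate in the first place. This circularity is broken by the finite-time lower bound of Theorem \ref{sym-pro}(vi), which is valid up to $T=\mathcal O(\tau^{-r})$ under the smallness of $\frac M\beta$ and is proved independently of the backward error analysis; within that horizon the constants $c$, $R$, $M_c$, and $\mathrm{Lip}(\widetilde{\mathcal H})$ are all uniform, so the telescoping bound closes without recursion.
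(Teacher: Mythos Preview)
Your proposal is correct and follows essentially the same route as the paper: confinement to the compact set $B$ via Theorem~\ref{sym-pro}(vi), analyticity and uniform bounds on $f$ from the discussion preceding the corollary, the one-step backward error estimate of \cite[Theorems 7.2 and 7.6]{HLW06}, and then summation over steps. The paper presents this as a direct consequence of that discussion without an explicit proof block; you have simply made the telescoping step and the bootstrap resolution explicit, which the paper leaves implicit.
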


\subsection{Regularizations}\label{sec-pde}

Here we look at two instances of regularization for \eqref{hpde}: one based on Fisher information, and one based
on standard viscosity solution.
We assume that $\mathcal M\subset \mathbb R$ is a bounded connected domain, and 
for simplicity restrict to \eqref{hpde} subject to periodic boundary conditions without the term $\mathcal F(\rho)$. 
The initial condition $\rho(0)>0,$ and $S(0)$, are smooth and bounded functions on $\mathcal M$.
We remark that all the proposed scheme can be constructed similarly in other domain in $\mathbb R^d$. 

\subsubsection{Fisher information regularization symplectic scheme} 
For the system \eqref{hpde}, its Lagrangian formalism is equivalent to its Hamiltonian formalism. 
We  can directly apply the Fisher information regularization symplectic scheme \eqref{sym} to the semi-discretization 
of the considered Hamiltonian PDE. 
We use the mid-point scheme applied to the graph generated by the central difference  
scheme under the periodic condition as an example of a fully discrete scheme,
\begin{equation}\label{Fis-mid}\begin{split}
\rho^{n+1}_i&= \rho^n_i +\frac{\partial \mathcal H(S^{n+\frac 12},\rho^{n+\frac 12})}{\partial S_i}\tau,\\
&=\rho^n_i-\sum_{j\in N(i)}\frac \tau{h^2}(S_j^{n+\frac 12}-S_i^{n+\frac 12})\theta_{ij}(\rho^{n+\frac 12})\\
S^{n+1}_i&= S^n_i-\frac {\partial \mathcal H(S^{n+\frac 12},\rho^{n+\frac 12})}{\partial \rho_i}\tau ,\\
&=  S^n_i-\frac 12\sum_{j\in N(i)}\frac \tau{h^2}(S_i^{n+\frac 12}-S_j^{n+\frac 12})^2 \frac {\partial \theta_{ij}}{\partial \rho_i}(\rho^{n+\frac 12})-\beta \frac {\partial I(\rho^{n+\frac 12})}{\partial \rho_i}\tau.
\end{split}\end{equation}
Then all the properties in Theorem \ref{sym-pro} hold. 
According to the priori estimate on the coefficients of discrete Hamiltonian PDEs, we have the following space-time step size restriction,
\begin{align*}
\tau\le C \min\Big(\frac 1{C_0}, \frac 1{C_0} \sqrt{\frac {cc_0}{M}},\frac 1{C_0}  \frac {c^2}{\beta(1+c+M_0c^2)}\Big),
\end{align*}
where
\begin{align*}
c\ge  \min( \frac 12\min_i \rho_i(0), \frac 1{1+N\exp(\frac {M(N-1)([\frac {N-1}2]+1)h^2}{\beta})}),\;\text{and}\; c_0=C_0=\frac 1{h^2}.
\end{align*}

%

If we do not add a regularization term, like Fisher information, to the numerical scheme of \eqref{dhs}, then the numerical
scheme may develop singularities and produce unstable behavior. 
The following example  indicates that even the structure-preserving numerical scheme which uses
the upwind weight $\theta^U$ without regularization will fail --at a finite step $n$-- 
to maintain positivity for $\rho_i^n$, and will lead to blow up for $S_i^n$.

\begin{ex}\label{ex-non}
Assume that  the graph has only two points. Assume that $\rho_1(0), \rho_2(0)>0$ and $S_1(0), S_2(0)$ are the corresponding initial densities and potentials of the two points. 
We choose $\theta_{ij}=\widetilde \theta_{ij}$ as the probability weight 
\begin{align*}
&\theta_{ij}(\rho)=\rho_j, \; \text{if} \; S_i>S_j,\\
&\theta_{ij}(\rho)=\rho_i,\; \text{if}\; S_i<S_j.
\end{align*}
For simplicity, assume that $S_1(0)>S_2(0)$, $\mathcal F(\rho)=0$.
Then the finite dimensional system becomes 
\begin{align*}
\dot \rho_1&=   (S_1-S_2)\rho_2,\;
\dot \rho_2=  (S_2-S_1)\rho_2,\\
\dot S_1&=0,\;
\dot S_2=-\frac 12 |{S_1-S_2}|^2.
\end{align*}
Then $S_1-S_2=\frac {S_1(0)-S_2(0)}{1- \frac 12(S_1(0)-S_2(0))t)}$. Until $t<\frac 2{S_1(0)-S_2(0)}$, $\rho_1$ and $\rho_2$ possess  the strict positivity property. When $t=\frac 2{S_1(0)-S_2(0)}$, $\rho_1=1,$ $\rho_2=0$.
\end{ex}

\subsubsection{Regularization by adding viscosity}
As alternative to adding Fisher information as regularization term,
a classical regularization procedure is obtained by adding numerical viscosity in order
to obtain monotone schemes for $S$. 
For example, by introducing the numerical viscosity  $\alpha_i(S^n):=\alpha(S^n_{i+1}-2S^n_{i}-S^n_{i-1})$, 
where $\alpha \in \R$ is used to guarantee the monotonicity of $S^{n+1}_i$. 
This is a standard way of proceeding ({\emph{elliptic regularization}}, 
which we now detail and further use in the numerical tests for comparison
purposes.  As we will see, although adding viscosity does lead to a well defined discretization \eqref{scheme-1}, 
unlike the regularization scheme \eqref{Fis-mid}, 
the numerical scheme \eqref{scheme-1} does not preserve relevant properties of the Hamiltonian
system (see Theorem \eqref{tm-vis} below). This can be easily appreciated
in the numerical tests in Section \ref{sec-test}.

Assume that $\max_{i,n}|\frac {S^{n}_{i+1}-S^n_{i}}h|\le R$.
Then, we can choose $\alpha$ $(0<\alpha <\frac 12, \alpha \ge R \frac {\tau} h)$ such that
\begin{align*}
&1-\frac {\tau} h (\frac {(S_{i+1}^n-S_{i}^n)^+}h+\frac {(S_{i-1}^n-S_{i}^n)^+}{h})
-2\alpha \ge 0,\\ 
& -\frac {\tau} h \frac {(S_{i+1}^n-S_{i}^n)^+}h+\alpha\ge 0, \\
& -\frac  {\tau} h \frac {(S_{i-1}^n-S_{i}^n)^+}h+\alpha \ge 0.
\end{align*}

Doing so, we get the following scheme:
\begin{equation}\label{scheme-1}\begin{split}
\rho_i^{n+1}&=\rho_{i}^n+\tau(\frac {S^n_{i}-S^n_{i+1}}{h^2})^+\rho_{i+1}^n+\tau(\frac {S^n_{i}-S^n_{i-1}}{h^2})^+\rho_{i-1}^n\\
&+
\tau(\frac {S^n_{i}-S^n_{i+1}}{h^2})^-\rho_{i}^n+\tau(\frac {S^n_{i}-S^n_{i-1}}{h^2})^-\rho_{i}^n\\
S_i^{n+1}&=S_i^n-\frac 12\tau|\frac {(S^n_{i}-S^n_{i+1})^-}{h}|^2-\frac 12\tau|\frac {(S^n_{i}-S^n_{i-1})^-}{h}|^2+\alpha_i(S^n).
\end{split}\end{equation}

Let $\rho^0$ and $S^0$ be the grid function of $\rho(0)$ and $S(0)$ on the grid $G$.
Then the proposed scheme \eqref{scheme-1} enjoys the following properties, which implies that
the numerical viscosity term leads to positivity of the density function and uniform boundedness of $S.$

\begin{tm}\label{tm-vis}
Assume that $\max_{i,n}|\frac {S^{n}_{i+1}-S^n_{i}}h|\le R,\ \alpha\ge R \frac {\tau} h$. 
Then there exists a unique solution $(\rho_i^n, S_i^n)_n$ of \eqref{scheme-1} and satisfies the following properties.

\begin{enumerate}[label=(\roman*)]
\item Mass is preserved: $\sum_{i}\rho_i^n=\sum_{i}\rho_i^0$.
\item It is strictly positive:
if $\min \rho_i^0 >0$, then $\min \rho_i^n >0$ for any $n$. 
\item If $\frac {\tau}h$ is sufficient small, and  $\tau,h\to 0$, then $S_i^n$ converges to the viscosity solution of the Hamilton Jacobi equation.
\item  It holds that $\lim\limits_{n\to\infty} S^n=S^{\infty}$ and $\lim\limits_{n\to\infty} \rho^n=\rho^{\infty}$, where $\rho^{\infty}\in \mathcal P_o(G)$.
\item It holds that
\begin{align*}
\|S^n\|_{l^{\infty}}\le \|S^0\|_{l^{\infty}},\;
\|\rho^n\|_{l^{\infty}}\le  \max((1+R\frac {\tau}h)^n\|\rho^0\|_{l^{\infty}},1/h).
\end{align*}
\end{enumerate}
\end{tm}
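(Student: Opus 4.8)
The plan is to regard \eqref{scheme-1} as an explicit one--step map $(\rho^n,S^n)\mapsto(\rho^{n+1},S^{n+1})$, so that existence and uniqueness of the whole sequence are automatic and the work lies entirely in the five structural properties. Writing $x^+=\max(x,0)$, $x^-=\min(x,0)$, so that $x=x^++x^-$ and $(-x)^+=-x^-$, the $\rho$--update reads
$$\rho_i^{n+1}=\big(1+\tau(a_i+b_i)\big)\rho_i^n+\tau A_i\,\rho_{i+1}^n+\tau B_i\,\rho_{i-1}^n,$$
with $A_i=\big(\tfrac{S_i^n-S_{i+1}^n}{h^2}\big)^+$, $B_i=\big(\tfrac{S_i^n-S_{i-1}^n}{h^2}\big)^+$ nonnegative and $a_i,b_i\le0$ the matching negative parts. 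For (i), I sum this over $i$ and collect the coefficient of each $\rho_j^n$: the diagonal term $\tau(a_j+b_j)$ is exactly cancelled by the off--diagonal contributions $\tau A_{j-1}$ and $\tau B_{j+1}$ coming from its neighbours, because $(-x)^+=-x^-$; hence every coefficient equals $1$ and mass is conserved. For (ii), the hypotheses $R\tfrac{\tau}{h}\le\alpha<\tfrac12$ and $|S_{i+1}^n-S_i^n|\le hR$ make the diagonal coefficient $1+\tau(a_i+b_i)=1-\tfrac{\tau}{h^2}\big[(S_{i+1}^n-S_i^n)^++(S_{i-1}^n-S_i^n)^+\big]$ strictly positive while $A_i,B_i\ge0$; since $\rho^n>0$, each $\rho_i^{n+1}$ is a positive combination of positive numbers, and strict positivity propagates by induction.

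The bounds (v) and the viscosity limit (iii) both rest on the monotonicity of the $S$--map $S^{n+1}=G(S^n)$. A direct differentiation shows that, under the same restrictions on $\alpha$, all partial derivatives of $G$ are nonnegative and $G$ commutes with constants, $G(S+c\mathbf 1)=G(S)+c\mathbf 1$, with $G(0)=0$; the Crandall--Tartar lemma then yields that $G$ is a sup--norm nonexpansion, so $\|S^{n+1}\|_{l^\infty}\le\|S^n\|_{l^\infty}$. For $\rho$ the rows of the update are nonnegative with sum $1-\tfrac{\tau}{h^2}(S_{i+1}^n-2S_i^n+S_{i-1}^n)=1+O\big(R\tfrac{\tau}{h}\big)$, which gives the geometric growth factor, and the crude cap $1/h$ reflects the natural scale of a discrete density of unit mass. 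Property (iii) is then the Barles--Souganidis convergence theorem: since $\mathcal F=0$ the $S$--equation decouples into a monotone, $l^\infty$--stable scheme with numerical Hamiltonian $\widehat H(p_-,p_+)=\tfrac12\big[((p_-)^-)^2+((p_+)^+)^2\big]$, which is consistent as $\widehat H(p,p)=\tfrac12p^2$, plus the vanishing viscosity $\alpha_i(S^n)$; as the limit equation $\partial_tS+\tfrac12|\partial_xS|^2=0$ enjoys a comparison principle, $S^n$ converges to its unique viscosity solution when $\tau,h\to0$ with $\tfrac{\tau}{h}\to0$.

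For the long--time behaviour (iv) I would first treat $S$. Monotonicity together with $G(c\mathbf 1)=c\mathbf 1$ forces $\max_iS_i^n$ to be nonincreasing and $\min_iS_i^n$ nondecreasing. Summing the $S$--update and using that the periodic viscosity term has zero sum shows $\sum_iS_i^n$ is nonincreasing and bounded below by $-N\|S^0\|_{l^\infty}$; therefore the dissipation $\sum_n\sum_i\big[((S_i^n-S_{i+1}^n)^-)^2+((S_i^n-S_{i-1}^n)^-)^2\big]$ is finite, so each term tends to $0$ and $(S_i^n-S_{i+1}^n)^-\to0$ for every $i$. The periodic telescoping identity $\sum_i(S_i^n-S_{i+1}^n)=0$ then forces the positive parts to vanish as well, so the oscillation of $S^n$ tends to $0$ and $S^n\to S^\infty$ with $S^\infty$ a constant vector.

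The remaining assertion, $\rho^n\to\rho^\infty\in\mathcal P_o(G)$, is the main obstacle. Once $S^n$ is nearly constant the increment $\rho^{n+1}-\rho^n$ is controlled by the oscillation of $S^n$, but the dissipation identity above only delivers $\ell^2$--summability of that oscillation, which does not by itself make $\rho^n$ a Cauchy sequence. I would overcome this by exploiting the genuinely parabolic character of the scheme near the constant equilibrium: there the discrete--heat contraction provided by the viscosity term $\alpha_i(S^n)$ dominates the quadratic Hamilton--Jacobi terms, so the oscillation of $S^n$ actually decays geometrically. This upgrades $\{\rho^{n+1}-\rho^n\}$ to an $\ell^1$ sequence, hence $\rho^n$ converges to some $\rho^\infty\in\mathcal P(G)$, and a uniform--in--$n$ lower bound on $\min_i\rho_i^n$ --- propagated through the positive--coefficient update on the connected graph --- keeps the limit in the open simplex $\mathcal P_o(G)$.
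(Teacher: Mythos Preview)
Your arguments for (i), (ii), (iii), (v) coincide with the paper's, which for (i), (iii), (v) simply cites Crandall--Lions \cite{CL84} and for (ii) drops the nonnegative off--diagonal terms exactly as you do to get $\rho_i^{n+1}\ge(1-2R\tau/h)\rho_i^n$. The genuine difference is (iv). For $S^n\to S^\infty$ the paper takes a compactness route rather than your dissipation route: it extracts a convergent subsequence by boundedness, propagates convergence to the full sequence via the nonexpansion $\|S^{n+m}-S^{n'+m}\|_{l^\infty}\le\|S^{n}-S^{n'}\|_{l^\infty}$, and then identifies $S^\infty$ as constant by showing, through a sign discussion and a global summation over the periodic lattice, that the stationary relation forced by $G(S^\infty)=S^\infty$ has only constant solutions. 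Your Lyapunov argument---monotone $\max$ and $\min$, the summed identity forcing $\sum_n\sum_i[(S_i^n-S_{i\pm1}^n)^-]^2<\infty$, then the periodic telescoping $\sum_i(S_i^n-S_{i+1}^n)=0$ to kill the positive parts---reaches the same conclusion more directly and, importantly, is quantitative. That pays off in the $\rho$ step: the paper also bounds the one--step increment $\|\rho^{n+1}-\rho^n\|$ by $O(\tau\|S^n-S^\infty\|)$ and then asserts Cauchyness without supplying a rate, whereas you correctly observe that a rate is needed and propose to extract geometric decay of $\mathrm{osc}(S^n)$ from the genuinely parabolic viscosity term near the constant equilibrium. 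This both makes $\{\rho^{n+1}-\rho^n\}$ an $\ell^1$ sequence and yields the uniform lower bound on $\min_i\rho_i^n$ (via $\prod_n(1-C\,\mathrm{osc}(S^n))>0$) required for $\rho^\infty\in\mathcal P_o(G)$, a point the paper's proof does not address explicitly.
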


\begin{proof}
For Properties (i), (iii) and (v),
we refer to \cite{CL84} for their proof relative to the numerical approximation 
$$ S_i^{n+1}=S_i^n-\frac 12\tau|\frac {(S^n_{i+1}-S^n_{i})}{h}|^2+\alpha_i(S^n).$$
We proceed to prove (ii) and (iv).

Due to the expression of $\rho^{n+1}_{i}$,
we get 
\begin{align*}
\rho_i^{n+1}&\ge \rho_{i}^n+
\frac {\tau} h\Big((\frac {S^n_{i}-S^n_{i+1}}h)^-+(\frac {S^n_{i}-S^n_{i-1}}h)^-\Big)\rho_{i}^n
\ge (1-2R\frac {\tau} h)\rho_{i}^n,
\end{align*}
which leads to 
\begin{align*}
\rho_i^{n}\ge (1-2R\frac {\tau} h)^n\rho_{i}^0.
\end{align*}
Thus we have that $\rho_i^{n}\ge e^{-c_1\frac {\tau n}h}\min_i\rho_{i}^0$ for some $c_1>0$ and (ii) holds.

Now we are in a position to show (iv).
Since $S^n$ is uniformly bounded with respect to $n$, there exists a sub-sequence $\{S^{n_{k}}\}_k$ converging
to a constant $S^{\infty}$. By using the comparison principle, we get that for any $k,l,m\in \N^+$, 
\begin{align*}
\|S^{n_{k}+m}-S^{n_{l}+m}\|_{l^{\infty}}\le \|S^{n_{k}}-S^{n_{l}}\|_{l^{\infty}}.
\end{align*}
Thus $\{S^{n_{k}+m}\}_k$ is a Cauchy sequence in $l^{\infty}(V \times \N^+)$ and converges to the same limit $S^{\infty}$. 
On the other hand, one can also check that 
the solution of the following relation 
\begin{align}\label{eli-ode}
\frac 12\Big|\frac {(S^{\infty}_{i}-S^{\infty}_{i+1})^-}{h}\Big|^2+\frac 12\Big|
\frac {(S^{\infty}_{i}-S^{\infty}_{i-1})^-}{h}\Big|^2+\alpha_i(S^{\infty})=0
\end{align}
must be $0$. Indeed, let us
assume that there is a nonzero solution for \eqref{eli-ode}.
From the fact that  $\alpha_i(S^{\infty})>0$ if $S^{\infty}_{i}-S^{\infty}_{i+1}< 0$, $S^{\infty}_{i}-S^{\infty}_{i+1}< 0$ and $\alpha_i(S^{\infty})<0$, if  $S^{\infty}_{i}-S^{\infty}_{i+1}>0$, $S^{\infty}_{i}-S^{\infty}_{i+1}> 0$, the nonzero solution of \eqref{eli-ode} should has different signs for $S^{\infty}_{i}-S^{\infty}_{i+1}$ and $S^{\infty}_{i}-S^{\infty}_{i-1}$ at each node $a_i$.
For simplicity assume that $S^{\infty}_{i}-S^{\infty}_{i+1}<0$ and $S^{\infty}_{i}-S^{\infty}_{i-1}>0$.
Now adding all the equations together, we obtain that 
\begin{align*}
\sum_{i=1}^N \frac 12\Big|\frac {(S^{\infty}_{i}-S^{\infty}_{i+1})^-}{h}\Big|^2=0,
\end{align*}
which contradicts the fact that $S^{\infty}_{i}-S^{\infty}_{i+1}<0$ for $i=1,\cdots,N$.
Repeating this argument, it follows that for any $1\le n\le N$,
the solution of the following relation 
\begin{align*}
\sum_{i=1}^n \left[ \frac 12\Big|\frac {(S^{\infty}_{i}-S^{\infty}_{i+1})^-}{h}\Big|^2+\frac 12\Big|\frac {(S^{\infty}_{i}-S^{\infty}_{i-1})^-}{h}\Big|^2+\alpha_i(S^{\infty})\right] =0
\end{align*}
must be $0$. 
As a consequence, for any subsequence $\{S^{n_k}\}_k$, we have 
\begin{align*}
\frac {S_i^{n_k+1}-S_i^{n_k}}{\tau}&=-\frac 12\Big|\frac {(S^{n_k}_{i}-S^{n_k}_{i+1})^-}{h}\Big|^2-\frac 12\Big|\frac {(S^{n_k}_{i}-S^{n_k}_{i-1})^-}{h}\Big|^2+\alpha_i(S^{n_k})
\end{align*}
converges to 
\begin{align*}
\frac 12\Big|\frac {(S^{\infty}_{i}-S^{\infty}_{i+1})^-}{h}\Big|^2+\frac 12\Big|\frac {(S^{\infty}_{i}-S^{\infty}_{i-1})^-}{h}\Big|^2+\alpha_i(S^{\infty})=0,
\end{align*}
which only possesses the unique zero solution.
Since $\|\rho\|_{l^1}=1$, there exists a subsequence $\{\rho^{n_k}\}_{k}$ which converges  to a density probability $\rho^{\infty}$.
From \eqref{scheme-1} and the convergence of $S$, we are in a position to show that all the subsequence of $\{\rho^{n}\}_{n}$ converges to the same limit $\rho^{\infty}$.  
In the following, we show that for given $k$ sufficient large, then $\{\rho^{n_k+m}\}_{m}$ is a Cauchy sequence.
Indeed, we have 
\begin{align*}
\|\rho_i^{n_k+1}-\rho_{i}^{n_k}\|_{l^{\infty}}
&\le  {\tau}\|\rho_{i+1}^{n_k}\|_{l^{\infty}} \|\Big((\frac {S^{n_k}_{i}-S^{n_k}_{i+1}}{h^2})^+-(\frac {S^{\infty}_{i}-S^{\infty}_{i+1}}{h^2})^+
\Big)\|_{l^{\infty}}\\
&+ {\tau}\|\rho_{i+1}^{n_k}\|_{l^{\infty}} \|\Big(\frac {S^{n_k}_{i}-S^{n_k}_{i-1}}{h^2})^+-\frac {S^{\infty}_{i}-S^{\infty}_{i-1}}{h^2})^+
\Big)\|_{l^{\infty}}\\
&+\tau \|\rho_{i}^{n_k}\|_{l^{\infty}}\|\Big(\frac {S^{n_k}_{i}-S^{n_k}_{i+1}}{h^2})^--(\frac {S^{\infty}_{i}-S^{\infty}_{i+1}}{h^2})^-
\Big)\|_{l^{\infty}}\\
&+\tau \|\rho_{i}^{n_k}\|_{l^{\infty}}\|\Big(\frac {S_{i}^{n_k}-S^{n_k}_{i-1}}{h^2})^--(\frac {S^{\infty}_{i}-S^{\infty}_{i-1}}{h^2})^-
\Big)\|_{l^{\infty}},
\end{align*}
which, together with  the uniform convergence of $S$, implies that $\rho^{n_k+m}$ is a Cauchy sequence and possesses the same limit $\rho^{\infty}$.
\end{proof}

\section{Numerical examples}\label{sec-test}

Here we show performance of the numerical schemes on several examples. 
All the numerical tests are performed under periodic boundary conditions in space, 
for given initial conditions $\rho(0)=\rho^0 $ and $S(0)=S^0$, as specified below.

\begin{ex}\label{Geo}[Geodesic equations]  This is the system \eqref{GeodEqn1}:
\begin{align*}
\partial_t \rho  +\nabla\cdot( \rho \nabla S) =0,\\
\partial_t S +\frac 12|\nabla S |^2=0.
\end{align*}
\end{ex}

We report on the results of two different strategies: the upwind scheme \eqref{scheme-1} with numerical viscosity,
and the Fisher information regularization symplectic scheme \eqref{Fis-mid}. 
We choose three different initial value conditions to compare the evolution of the density function and energy. 
(The different behaviors of $S$ and $\nabla S$ for \eqref{scheme-1} and \eqref{Fis-mid} are not of interest, 
since for \eqref{scheme-1} $S$ will always converge to a constant; see Theorem \ref{tm-vis}.) 

In Figure \ref{fig-0315}, we show the behavior of \eqref{scheme-1} and \eqref{Fis-mid} with initial
value $\rho^0(x)=\frac {\exp(-10(x-0.5)^2)}{K}$ and $S^0(x)=-\frac 15\log(\cosh(5(x-0.5))).$ 
Here $K$ is a normalization constant so that $\int_0^1\rho^0(x)dx=1.$ 
We observe that for $T<0.15$ the two scheme behave quite closely to each other
and the density concentrates at the point $0.5$.  But, after $T=0.15$,  the density of 
\eqref{Fis-mid} begins 
to oscillate. 
Here, we choose spatial step-size $h=5\times 10^{-3}$, temporal step-size $\tau=10^{-4}$, 
viscosity coefficient $\alpha=1/12$ for \eqref{scheme-1}, and $\theta_{ij}(\rho)=\theta_{ij}^U(\rho),$
$\widetilde \theta_{ij}(\rho)=\theta_{ij}^L(\rho),$ $\beta=10^{-5}$ for \eqref{Fis-mid}. 
In Figure \ref{fig-osc}, we also plot the density functions computed by \eqref{Fis-mid} with different
schemes and  different temporal and spatial step sizes, and clearly the oscillations appear 
to be independent of the choice of schemes and mesh sizes; this leads us to believe that
the oscillations exists for the continuous system.

\begin{figure}
\centering
\subfigure {
\begin{minipage}[b]{0.31\linewidth}
\includegraphics[width=1.15\linewidth]{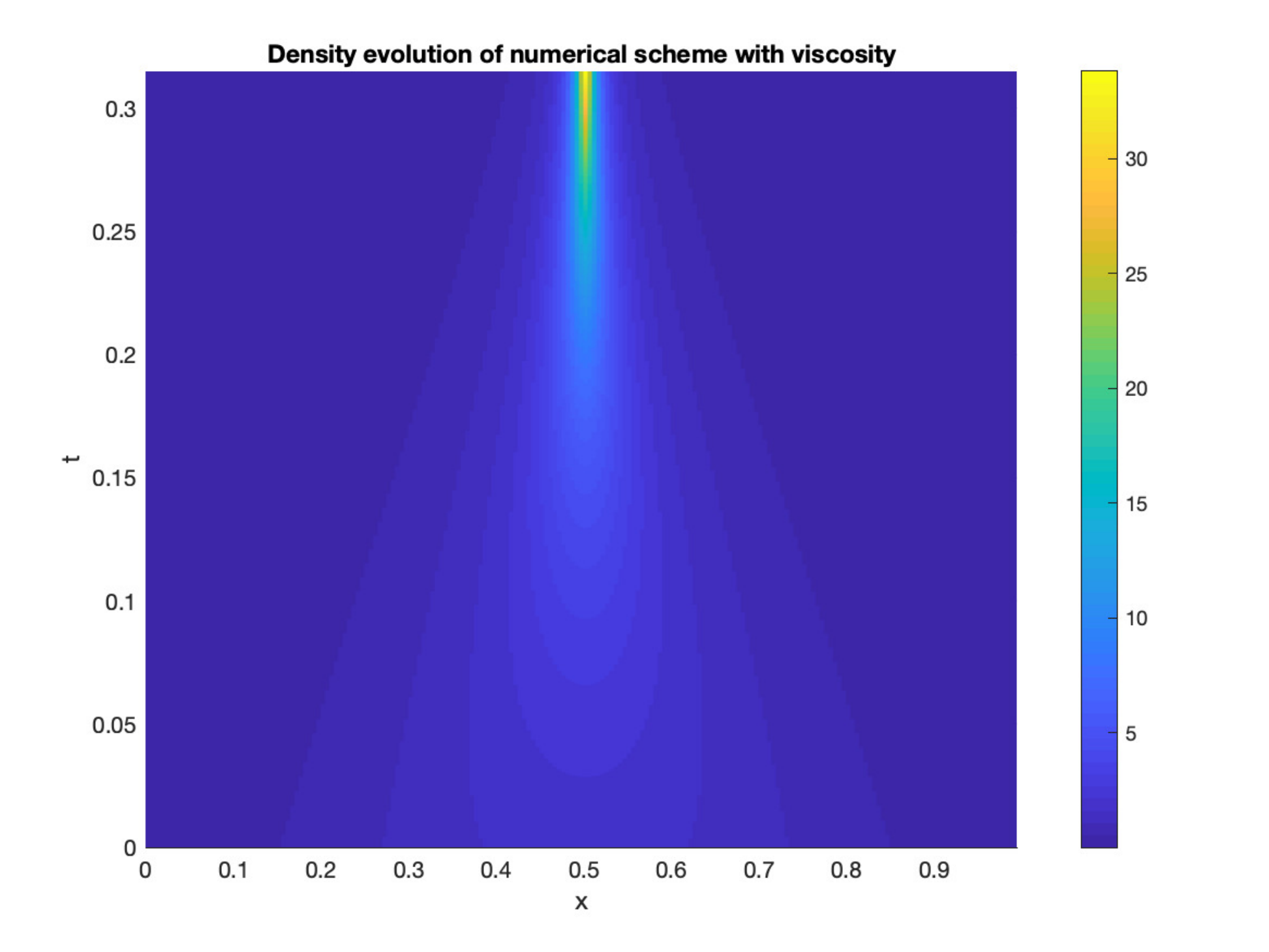}
\includegraphics[width=1.15\linewidth]{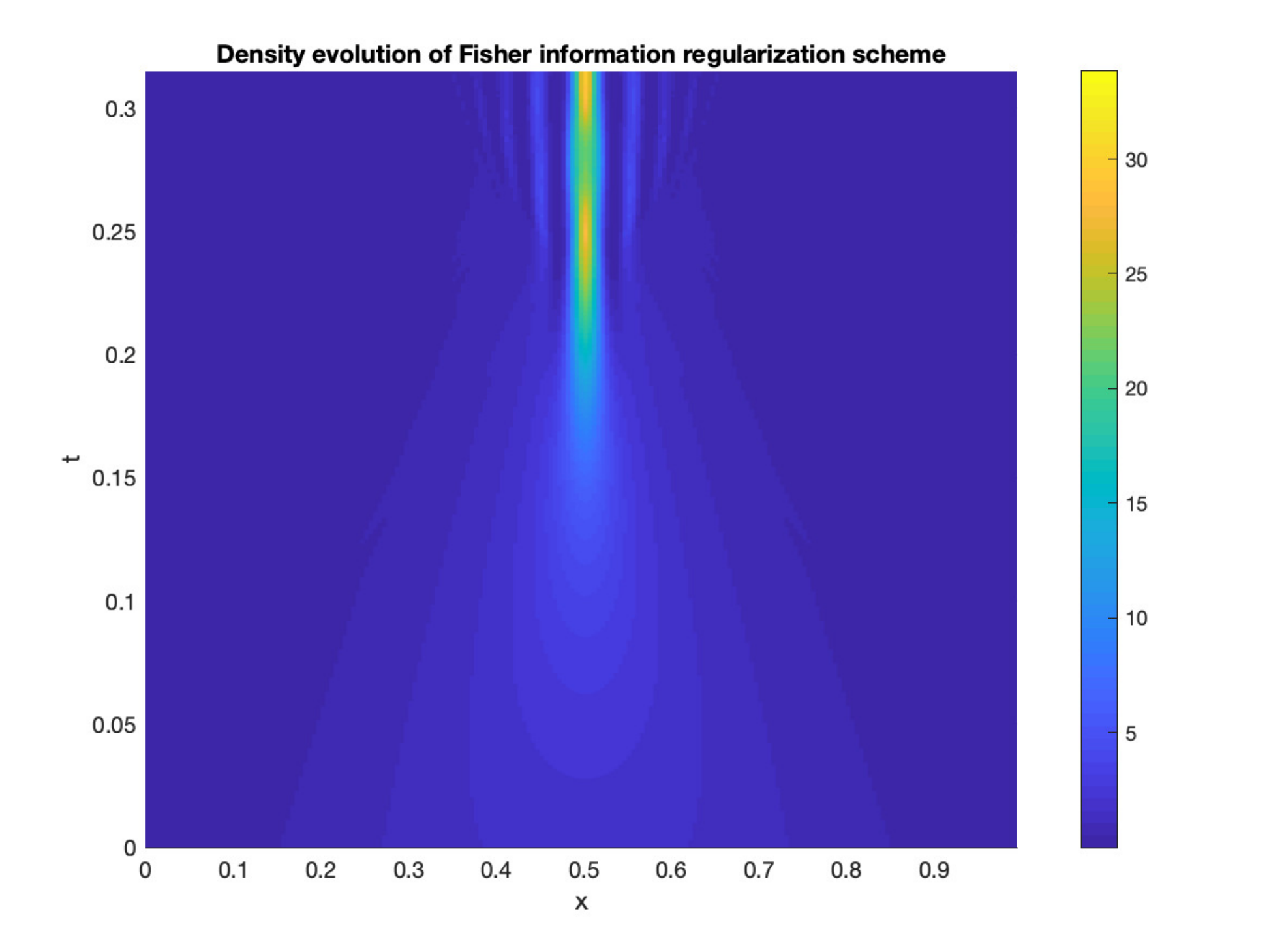}
\end{minipage}}
\subfigure{
\begin{minipage}[b]{0.31\linewidth}
\includegraphics[width=1.15\linewidth]{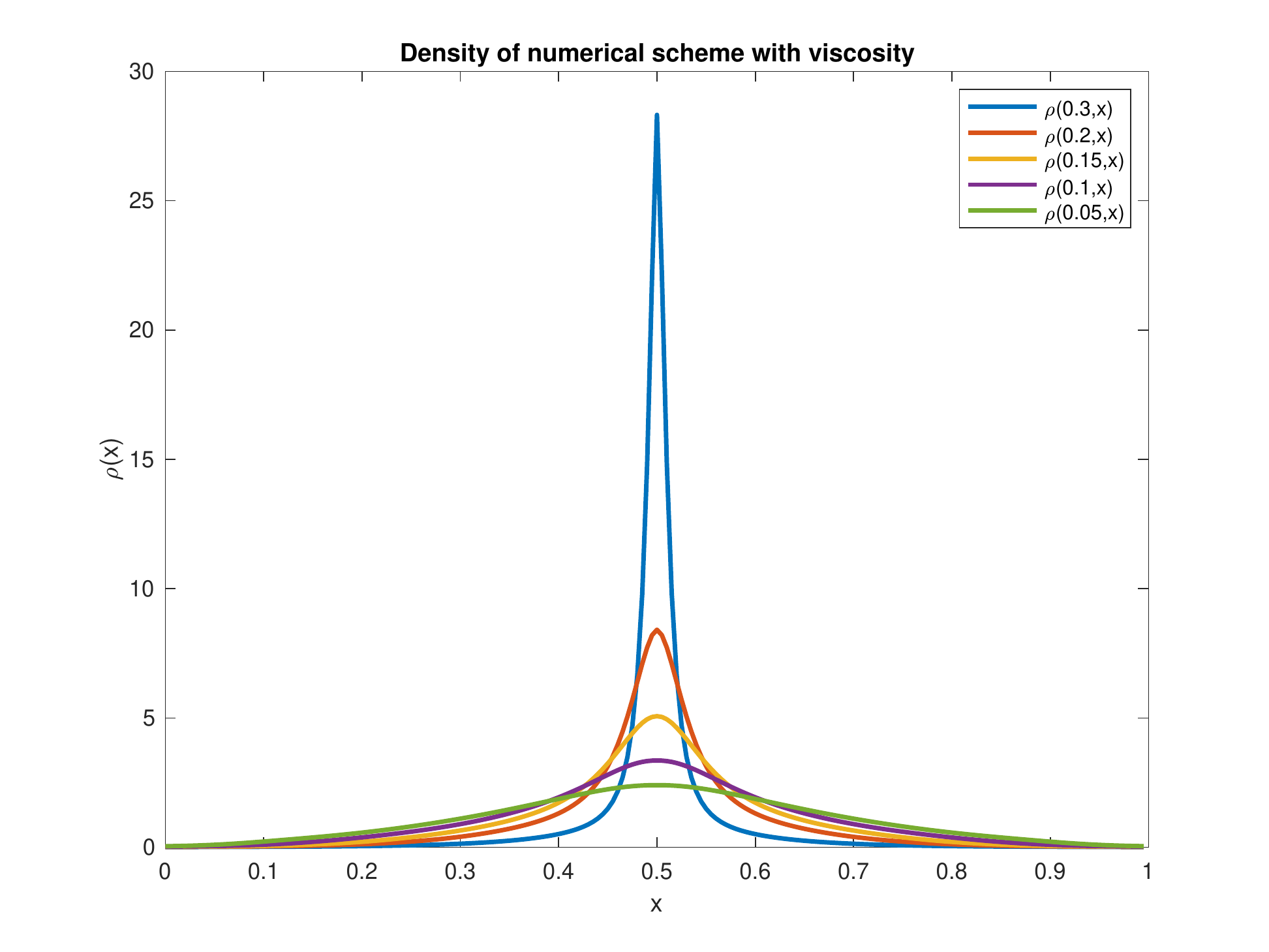}
\includegraphics[width=1.15\linewidth]{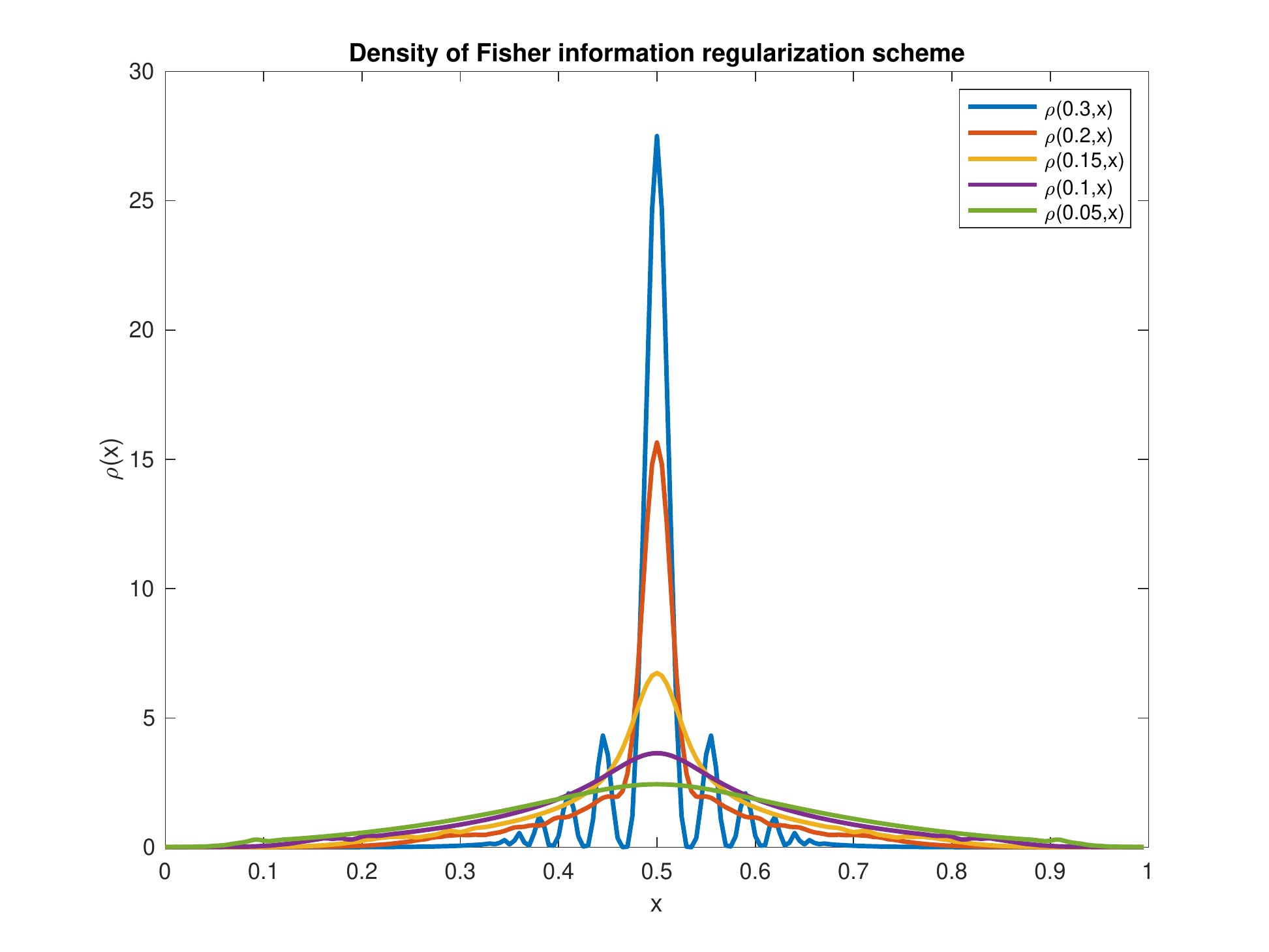}
\end{minipage}}
\subfigure{
\begin{minipage}[b]{0.31\linewidth}
\includegraphics[width=1.15\linewidth]{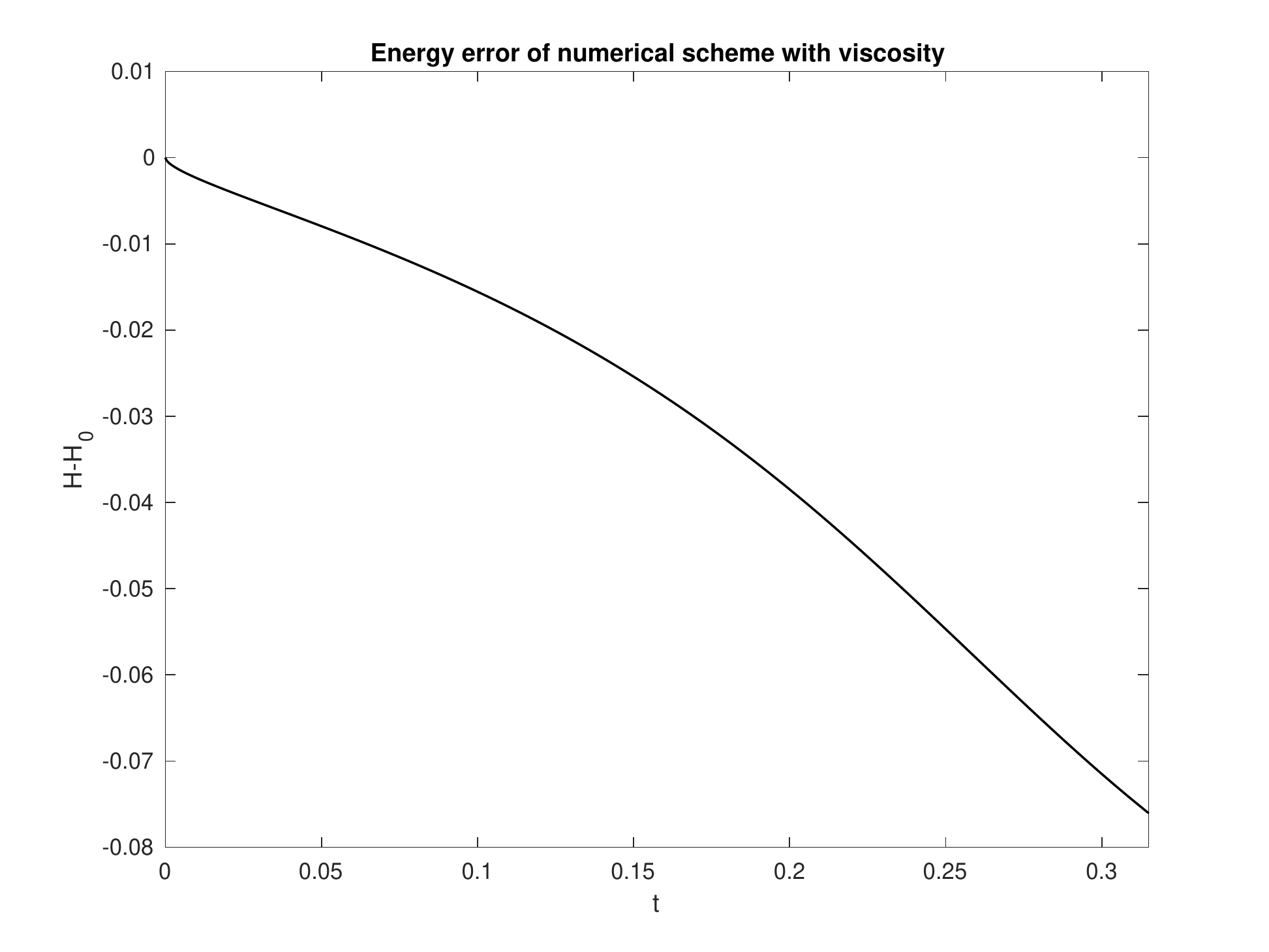}
\includegraphics[width=1.15\linewidth]{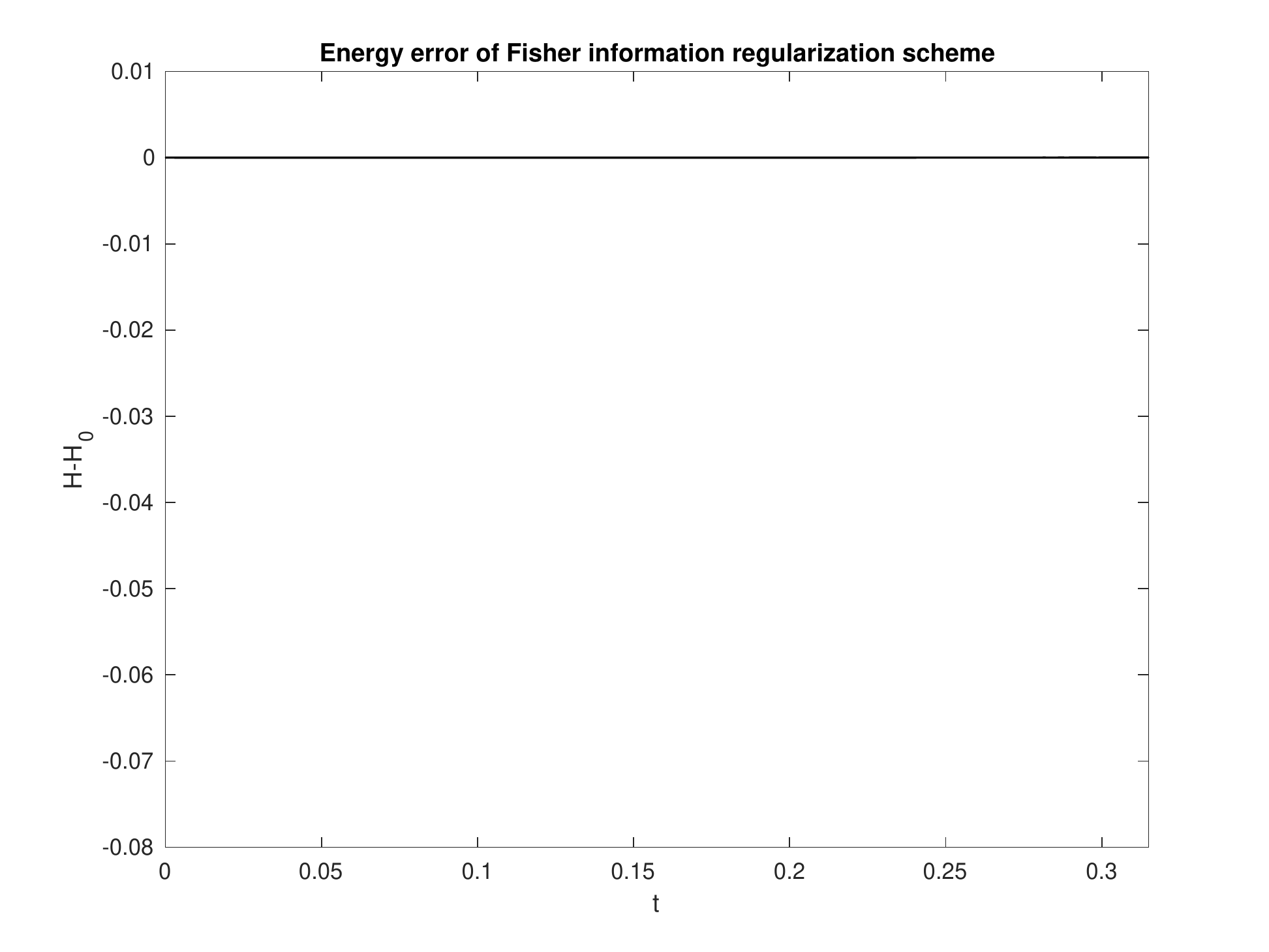}
\end{minipage}}

\centering 
\caption{The contour plot of $\rho(t,x)$ (left), snapshots 
of $\rho(t,x)$ at $t=(0.3,0.2,0.15,0.1,0.05)$ (middle) and energy error before $T=0.315$ (right) for the upwind 
scheme \eqref{scheme-1} with numerical viscosity (top) and the Fisher information regularization symplectic scheme 
\eqref{Fis-mid} (bottom).}
\label{fig-0315}
\end{figure}

\begin{figure}
\centering
 
\subfigure[Midpoint scheme]{
\includegraphics[width=2.4in,height=2.3in]{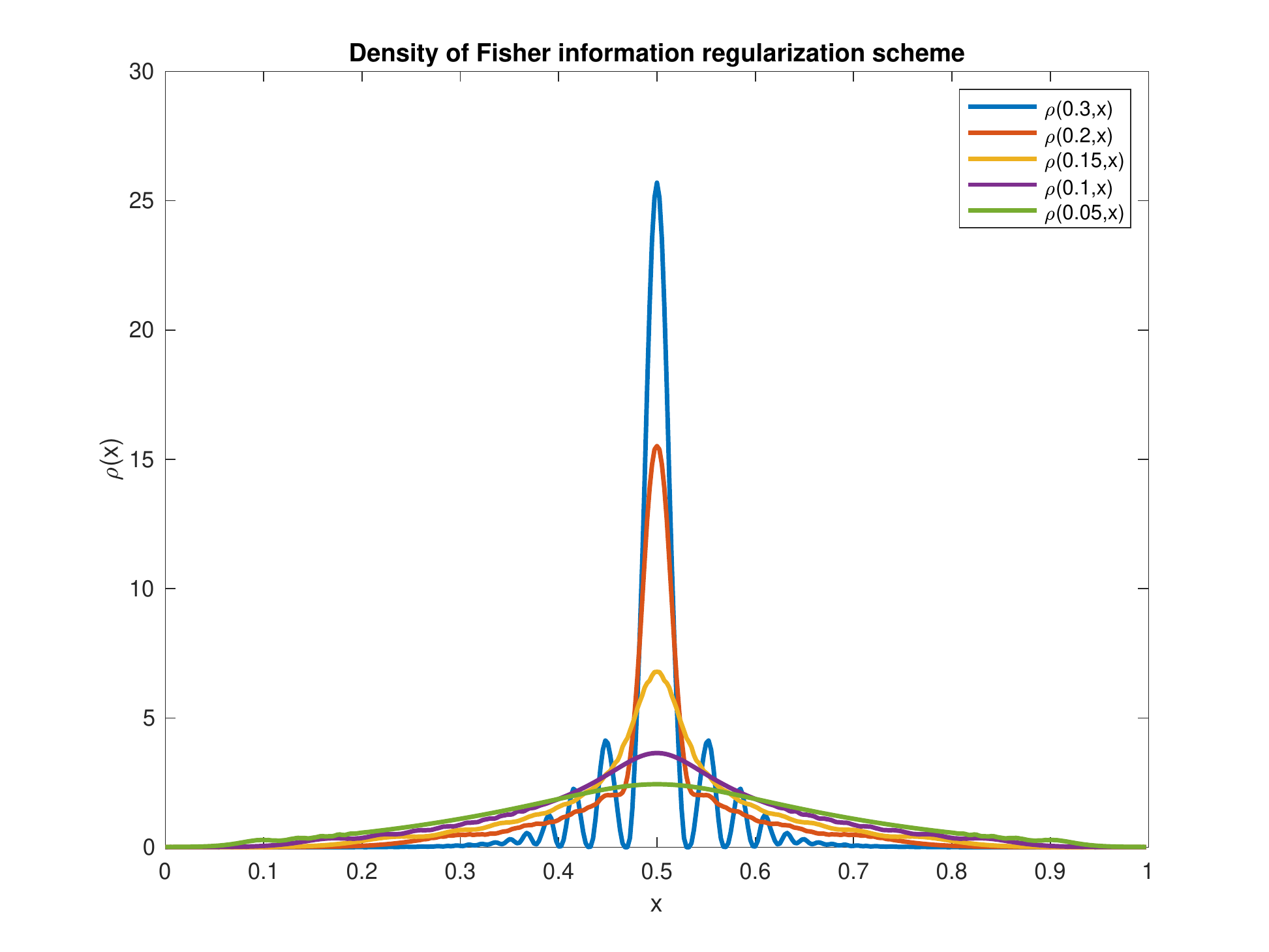}
\includegraphics[width=2.4in,height=2.3in]{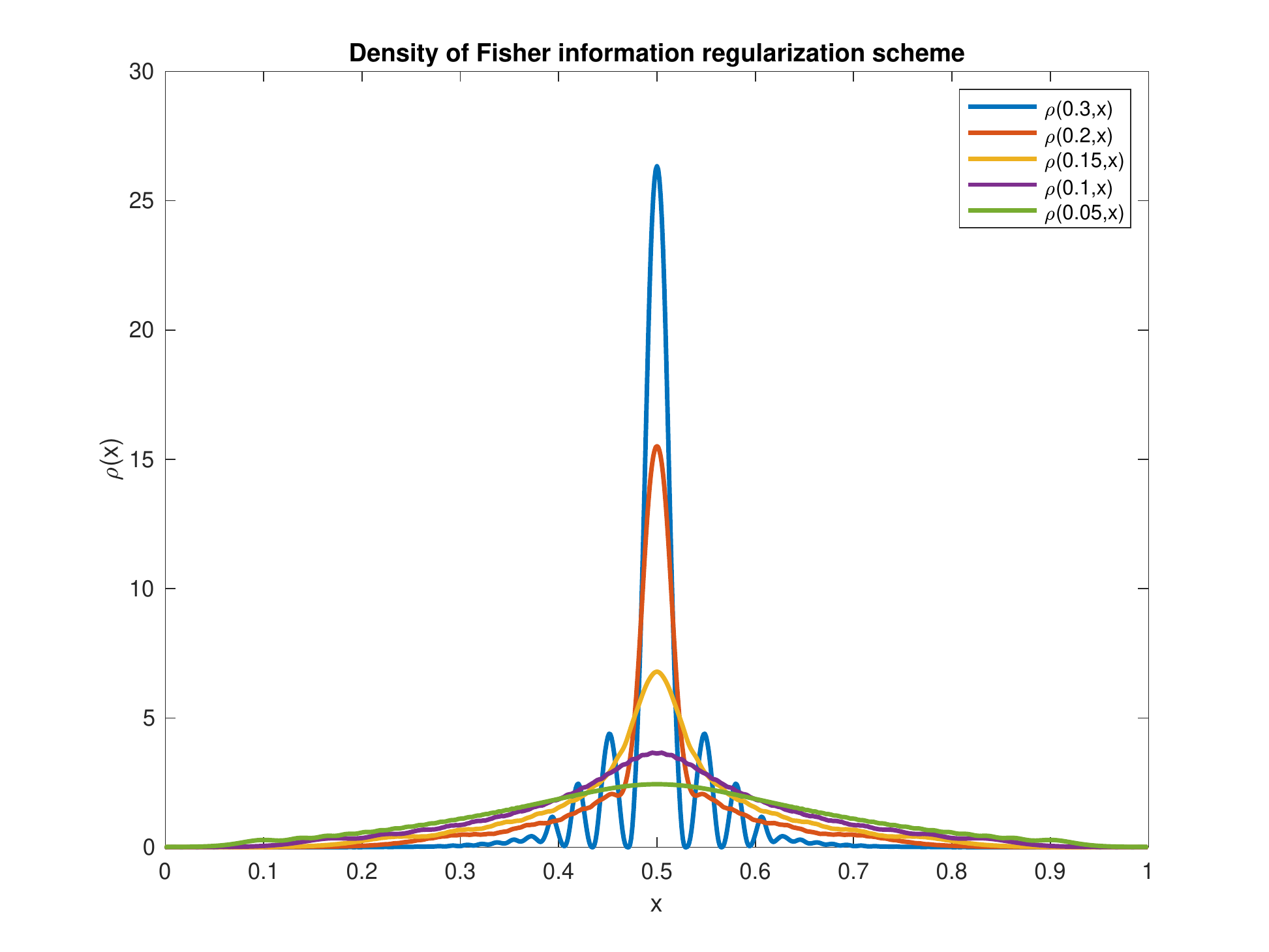}
}

\subfigure[Implicit Euler scheme]{
\includegraphics[width=2.4in,height=2.3in]{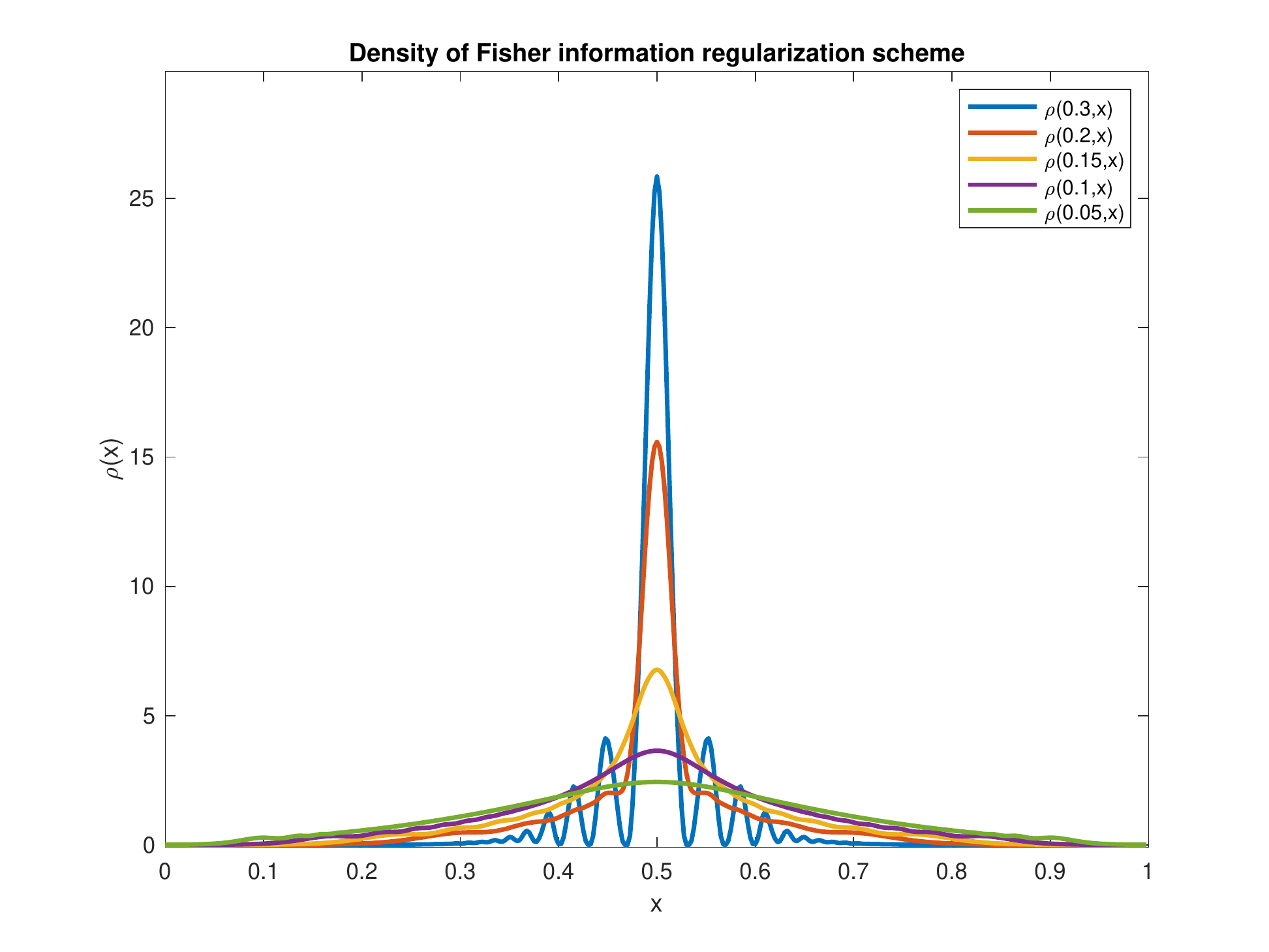}
\includegraphics[width=2.4in,height=2.3in]{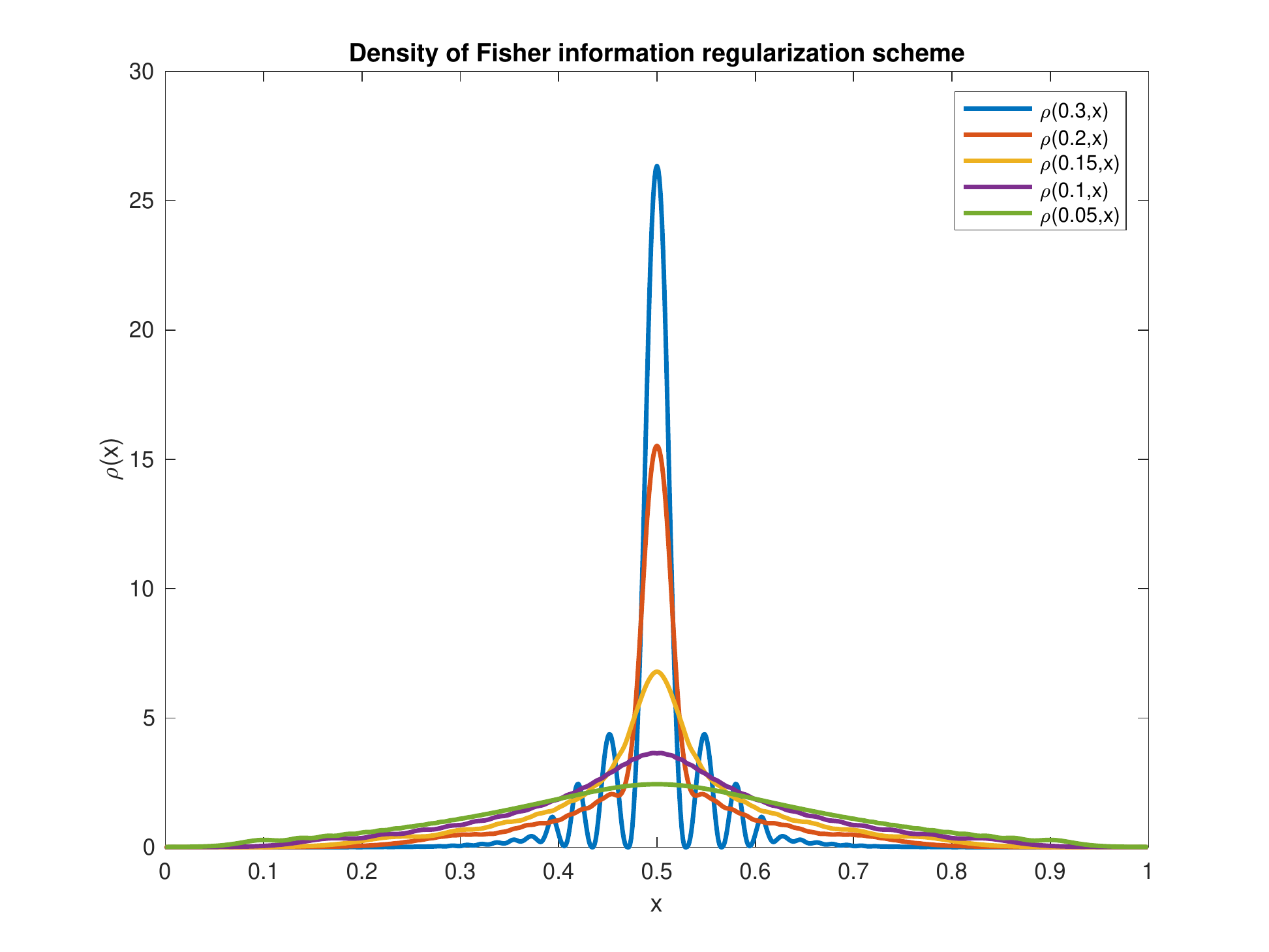}
}

\subfigure[Symplectic Euler scheme]{
\includegraphics[width=2.4in,height=2.3in]{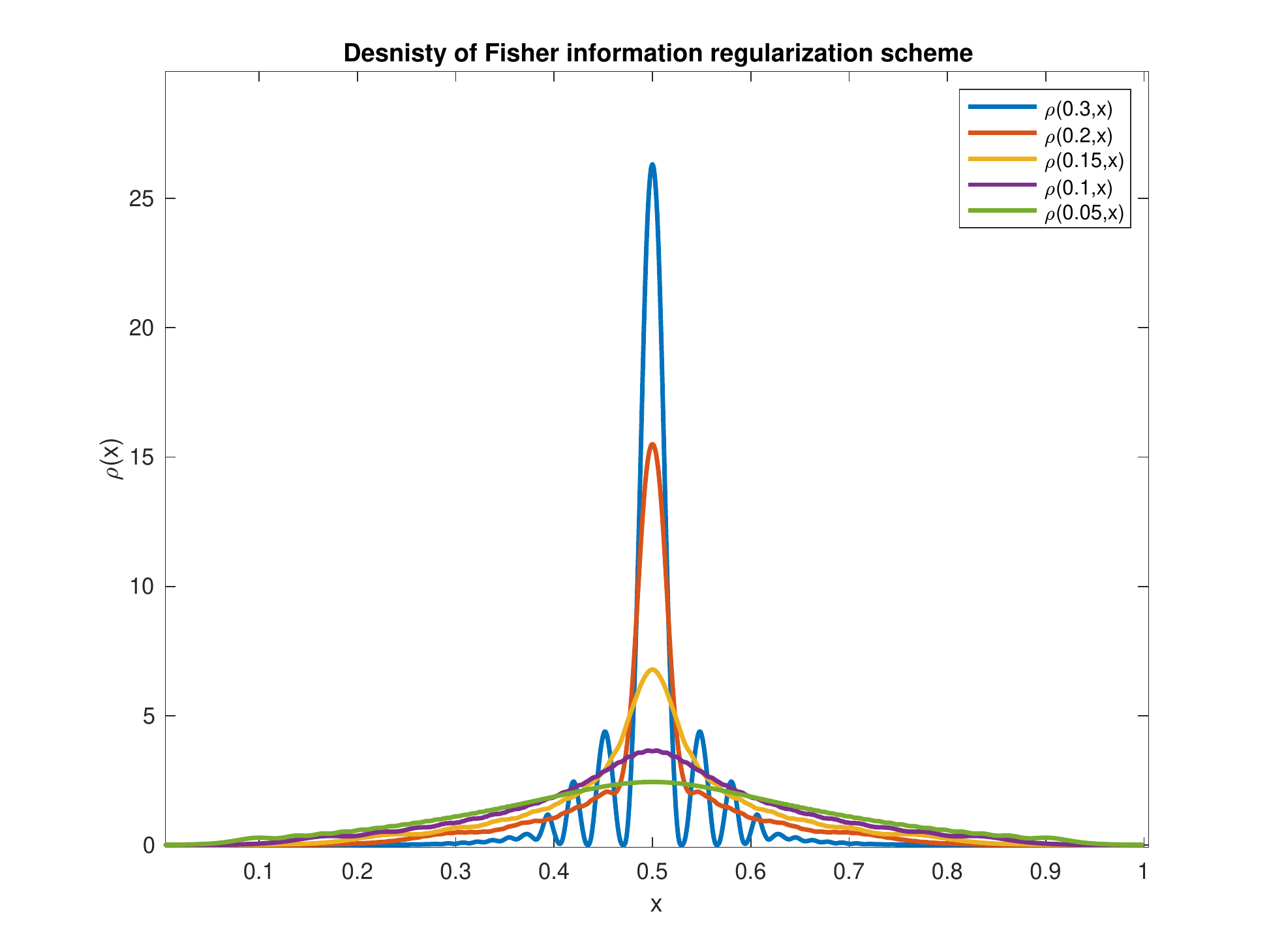}
\includegraphics[width=2.4in,height=2.3in]{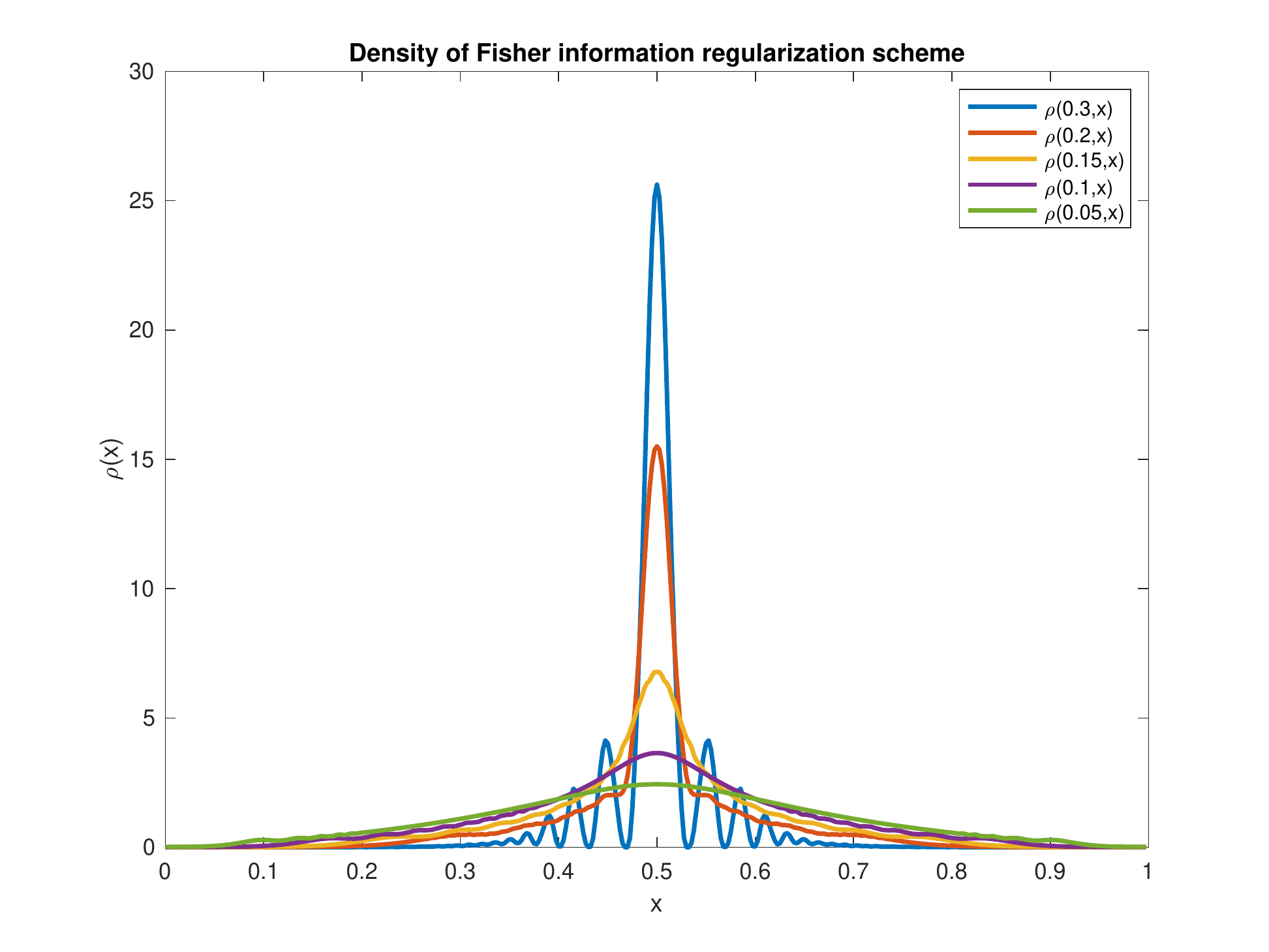}
}

\centering 
\caption{In (a) and (c), there are snapshots of $\rho(t,x)$ at $t=(0.3,0.2,0.15,0.1,0.05)$ for 
\eqref{Fis-mid} and \eqref{Fis-eul} with $h=0.25\times 10^{-2},\tau=0.25\times 10^{-4}$ (left) 
and $h=0.125\times 10^{-2},\tau=0.2\times 10^{-4}$ (right). In (b), we show snapshots of $\rho(t,x)$ at 
$t=(0.3,0.2,0.15,0.1,0.05)$ for \eqref{Fis-mid} 
with  $h=0.25\times 10^{-2},\tau=1/3\times 10^{-4}$ (left) and $h=1/8\times 10^{-2},\tau=1/2\times 10^{-5}$ (right). 
}
\label{fig-osc}
\end{figure}

In Figure \ref{fig-0315b} and Firgure \ref{fig-05}, we observe the same phenomenon for different initial conditions.
In Figure \ref{fig-0315b}, we take $\mathcal M=[0,1]$, $\rho^0(x)=1$ and $S^0(x)=-\frac 15\log(\cosh(5(x-0.5))).$ 
We choose spatial step-size $h=1.5\times 10^{-3}$, temporal step-size $\tau=1.3863\times 10^{-5}$, 
viscosity coefficient $\alpha=8\times 10^{-2}$ for \eqref{scheme-1}, and $\theta_{ij}(\rho)=\theta_{ij}^U(\rho),$
$\widetilde \theta_{ij}(\rho)=\theta_{ij}^L(\rho),$ $\beta=5\times 10^{-7}$ for \eqref{Fis-mid}. In Firgure \ref{fig-05},
we choose  $\rho^0=\frac 12$, $S^0=\frac  18\sin(2\pi x)$, $\mathcal M=[0,2]$,the spatial step-size $h=10^{-2}$, 
temporal step-size $\tau=10^{-4}$, viscosity coefficient $\alpha=5\times 10^{-2}$ for \eqref{scheme-1}, and 
$\theta_{ij}(\rho)=\theta_{ij}^U(\rho),$
$\widetilde \theta_{ij}(\rho)=\theta_{ij}^L(\rho),$ $\beta=10^{-4}$ for \eqref{Fis-mid}. 
All these numerical tests show that the Fisher information regularization scheme \eqref{Fis-mid} 
preserves more structures for \eqref{hpde}, such as the energy evolution and time transverse invariance,
compared to the numerical scheme \eqref{scheme-1}. 
Meanwhile \eqref{Fis-mid} causes oscillatory behaviors after the singularity of \eqref{hpde} is developed. 

\begin{figure}

\centering
\subfigure {
\begin{minipage}[b]{0.31\linewidth}
\includegraphics[width=1.15\linewidth]{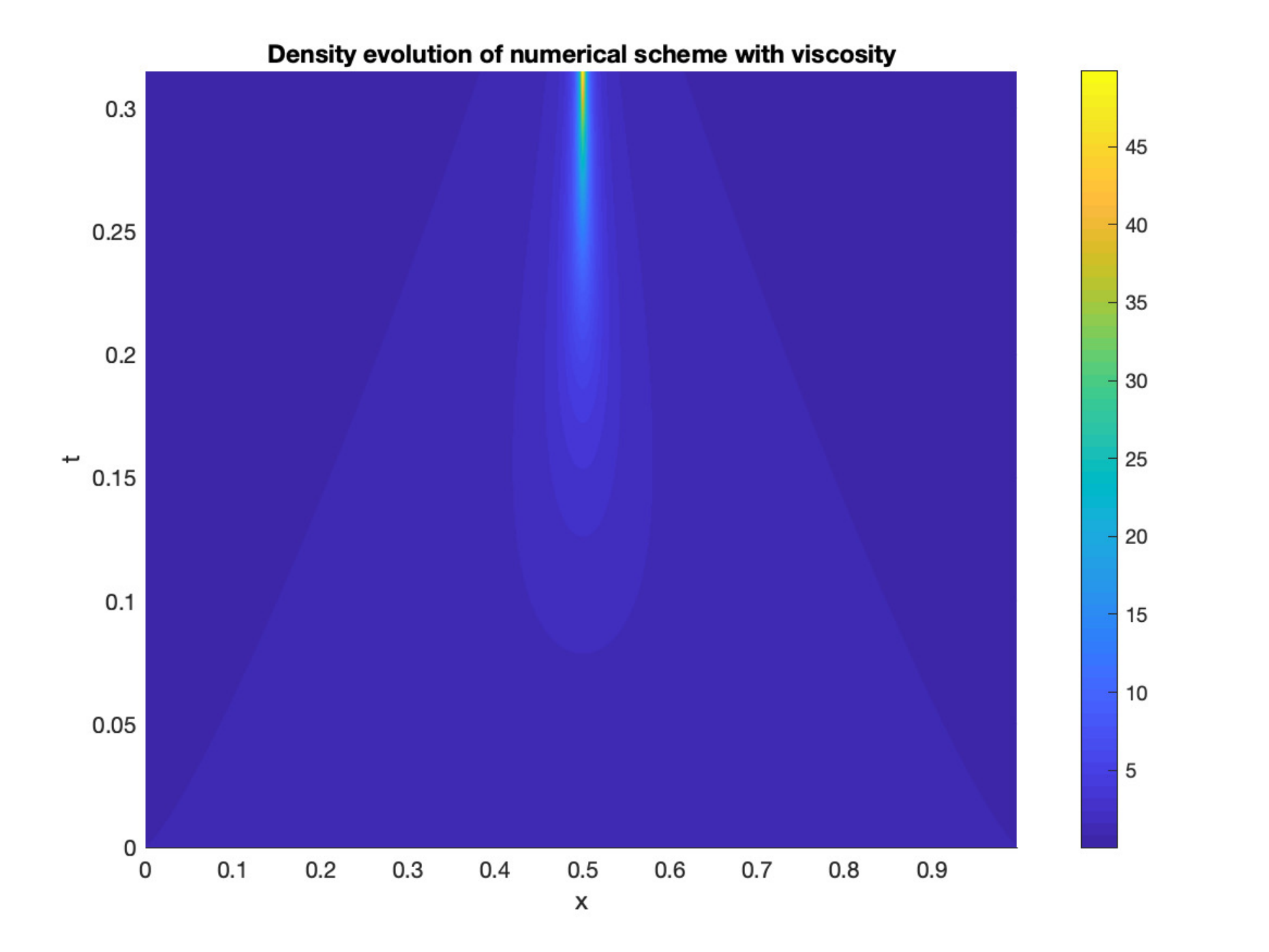}
\includegraphics[width=1.15\linewidth]{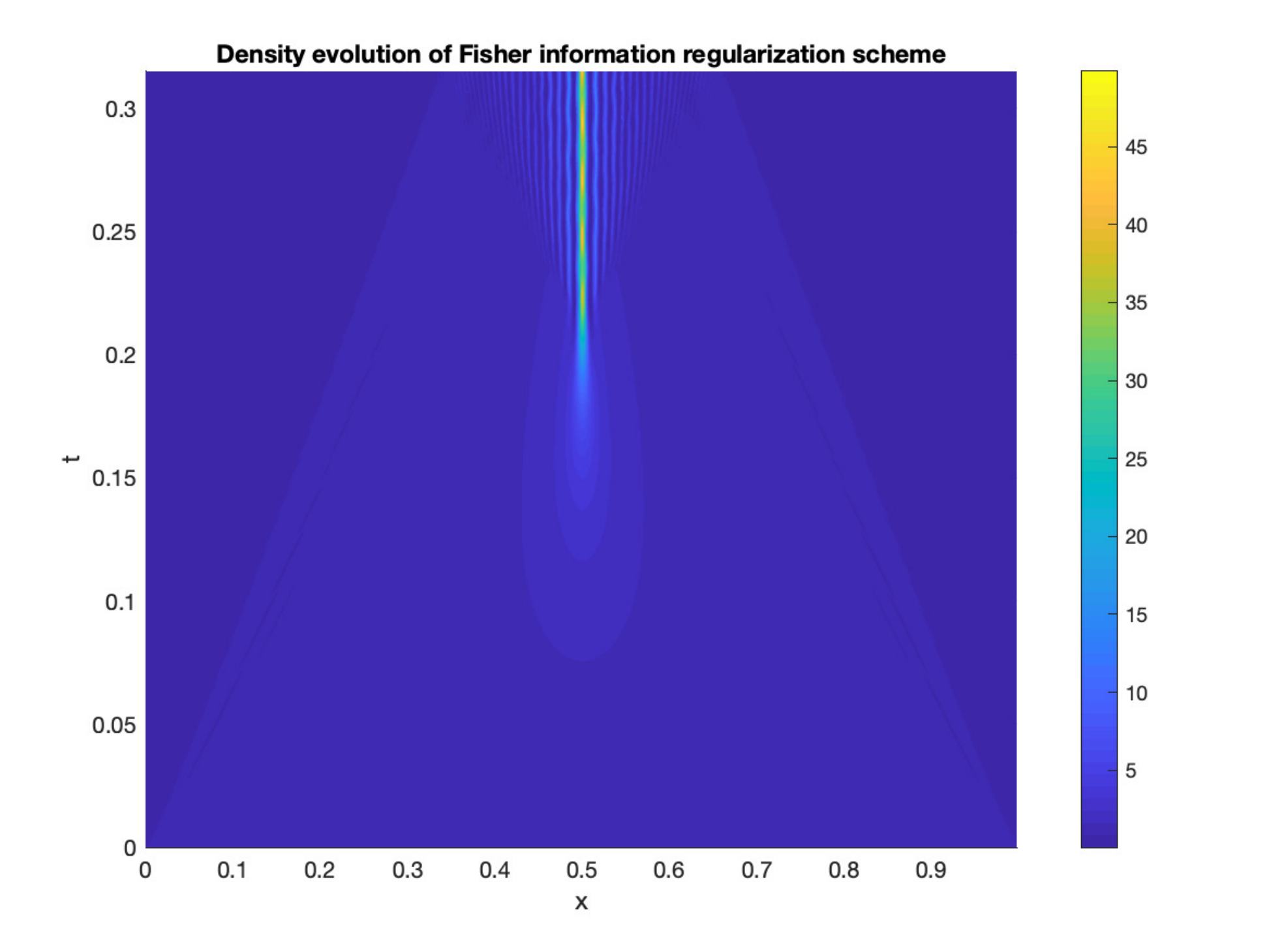}
\end{minipage}}
\subfigure{
\begin{minipage}[b]{0.31\linewidth}
\includegraphics[width=1.15\linewidth]{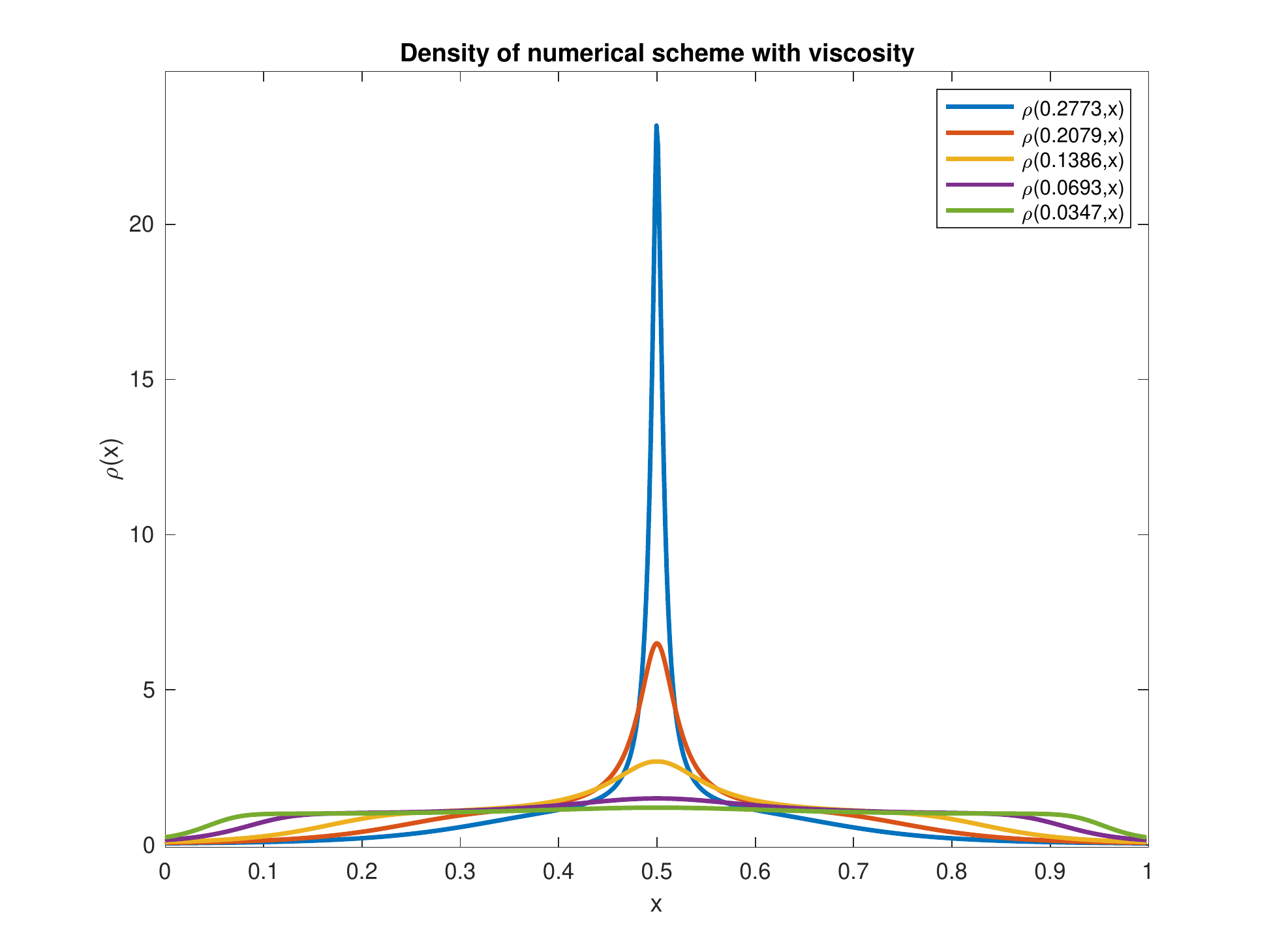}
\includegraphics[width=1.15\linewidth]{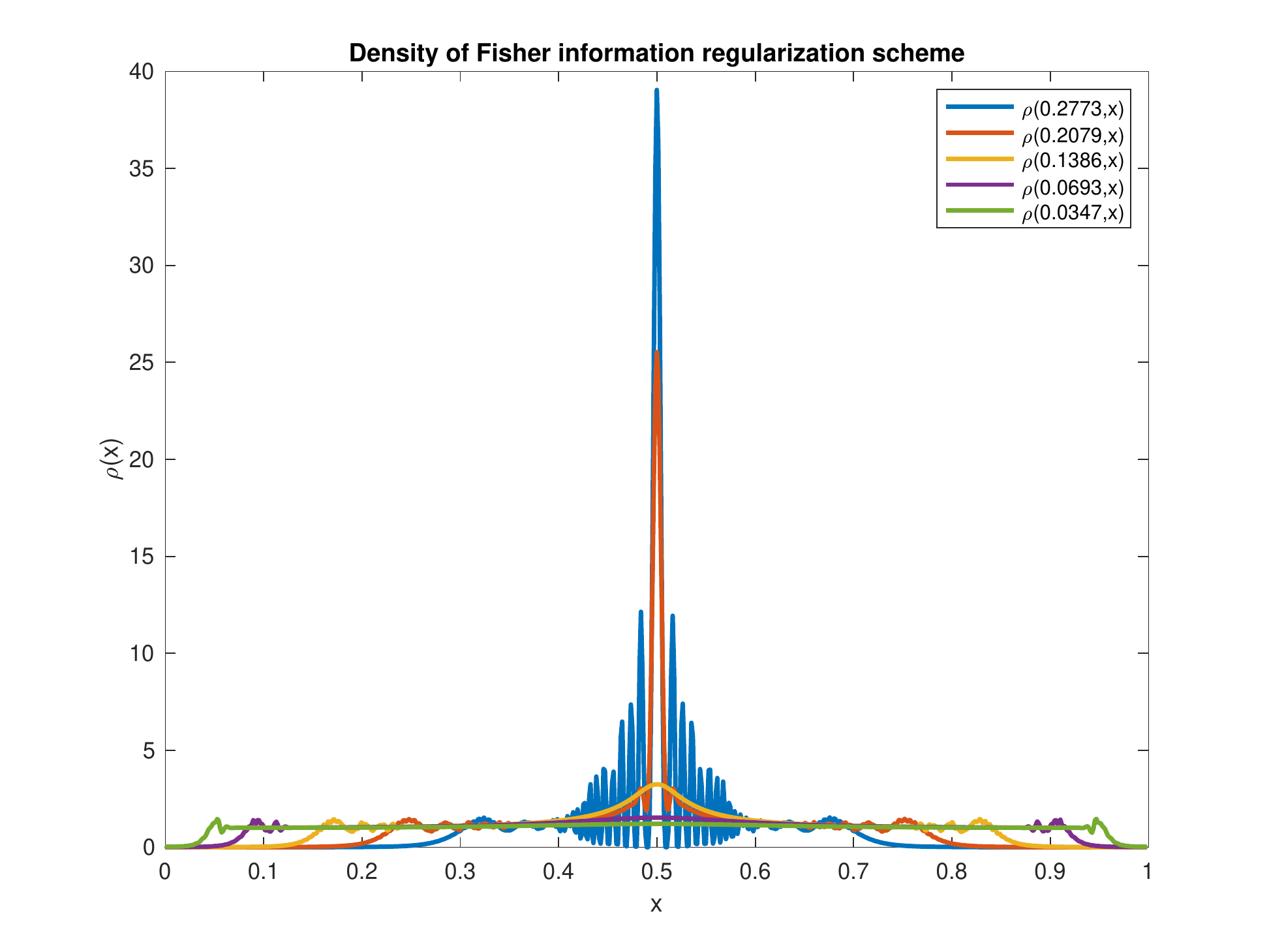}
\end{minipage}}
\subfigure{
\begin{minipage}[b]{0.31\linewidth}
\includegraphics[width=1.15\linewidth]{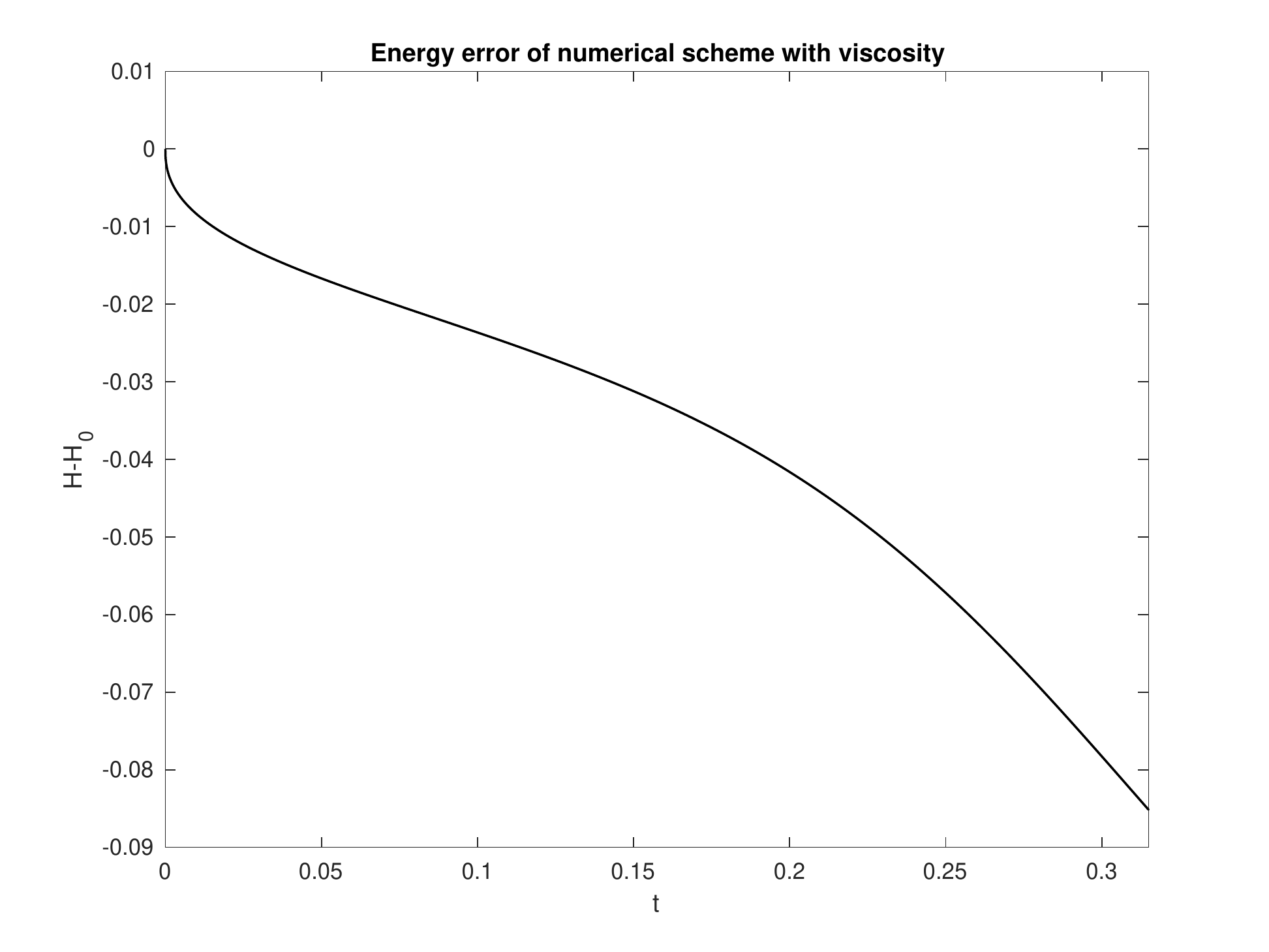}
\includegraphics[width=1.15\linewidth]{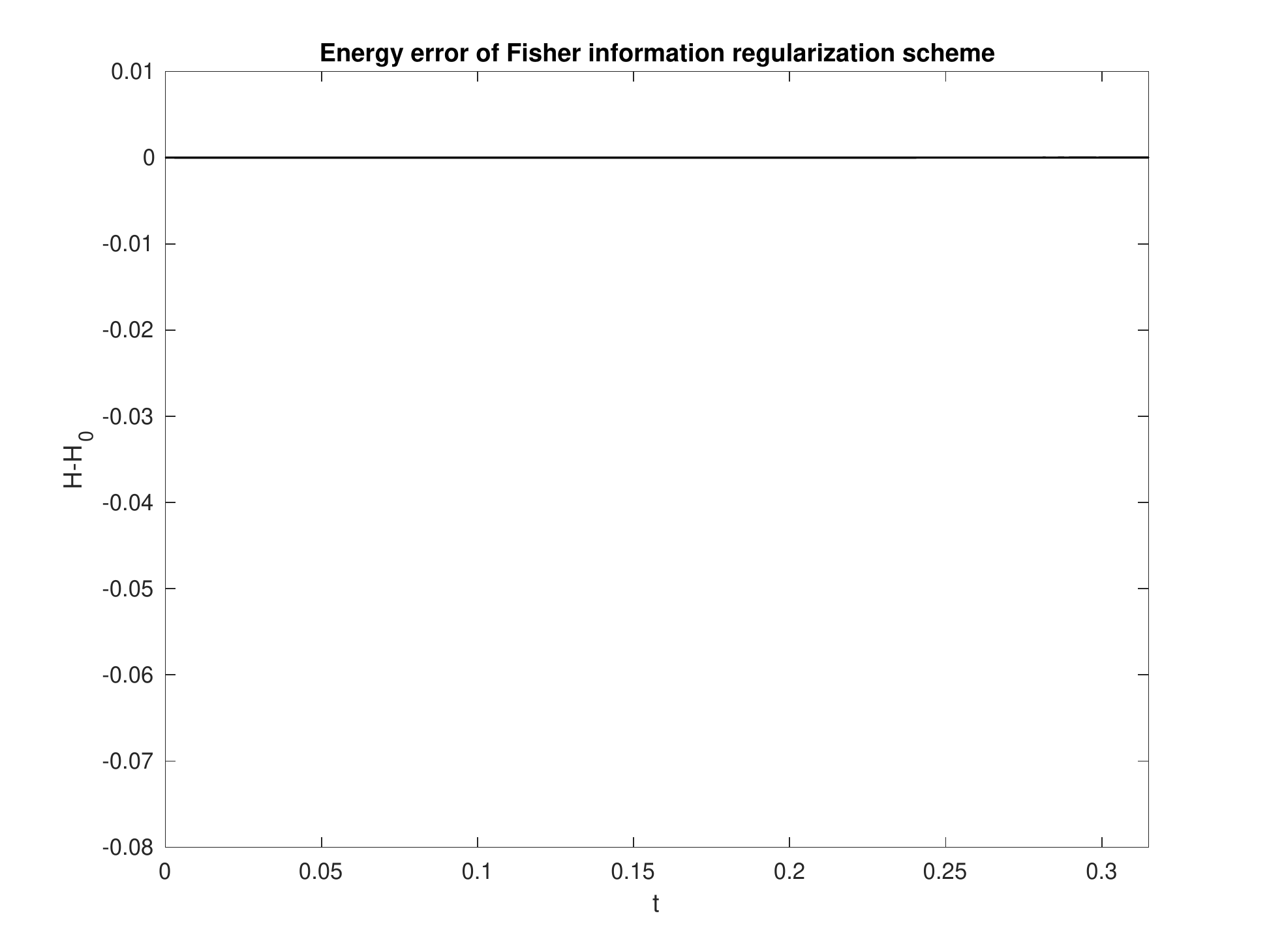}
\end{minipage}}

\centering 
\caption{Contour plot of $\rho(t,x)$ (left), snapshots of $\rho(t,x)$ at $t=(0.2773,0.2079,0.1386,0.0693,0.0347)$ (right)
and the energy error before $T=0.315$ (right) for the upwind scheme \eqref{scheme-1} with numerical viscosity (top)
and the Fisher information regularization symplectic scheme \eqref{Fis-mid} (bottom).}
\label{fig-0315b}
\end{figure}

\begin{figure}
\centering 
\subfigure {
\begin{minipage}[b]{0.31\linewidth}
\includegraphics[width=1.15\linewidth]{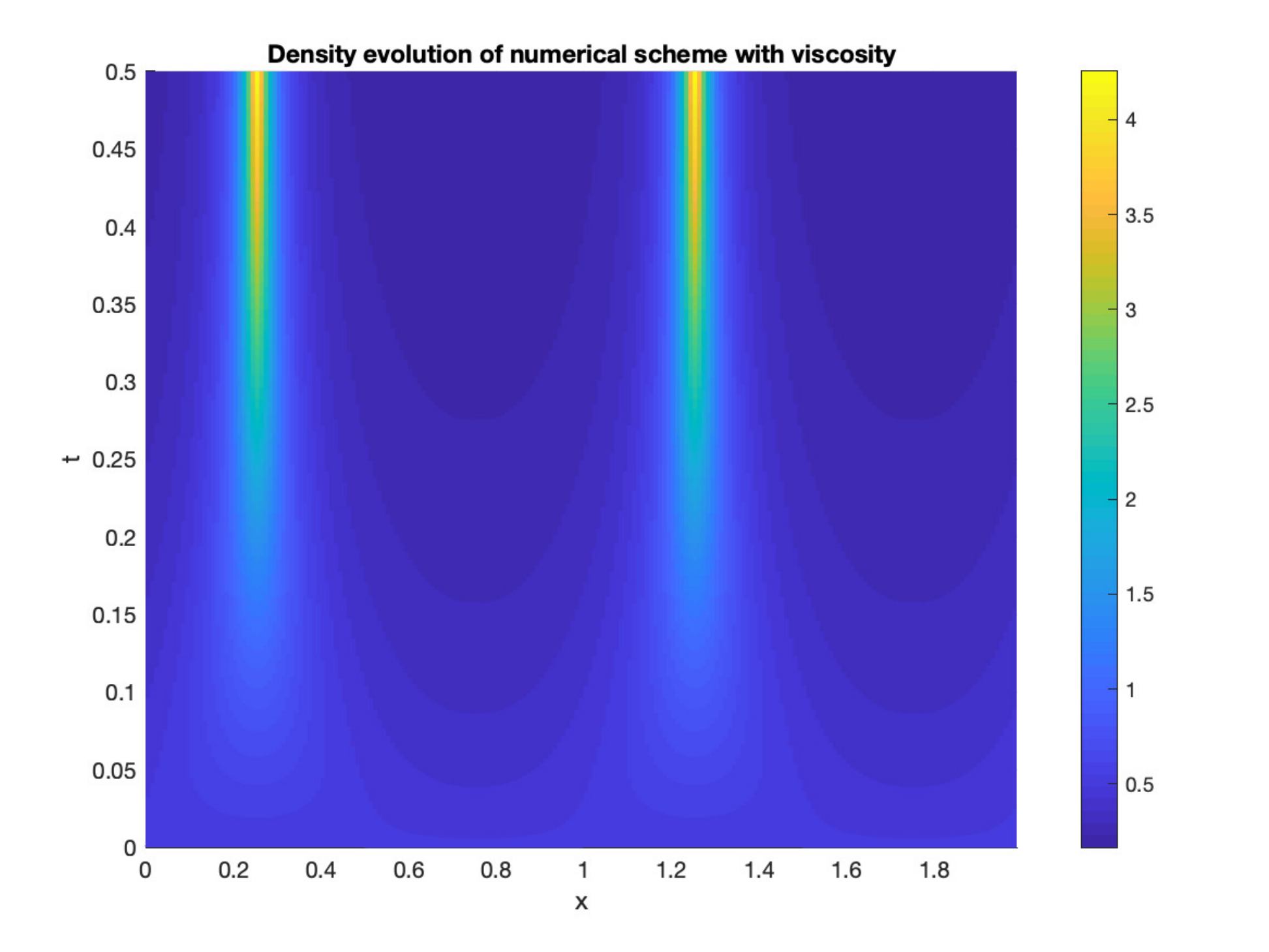}
\includegraphics[width=1.15\linewidth]{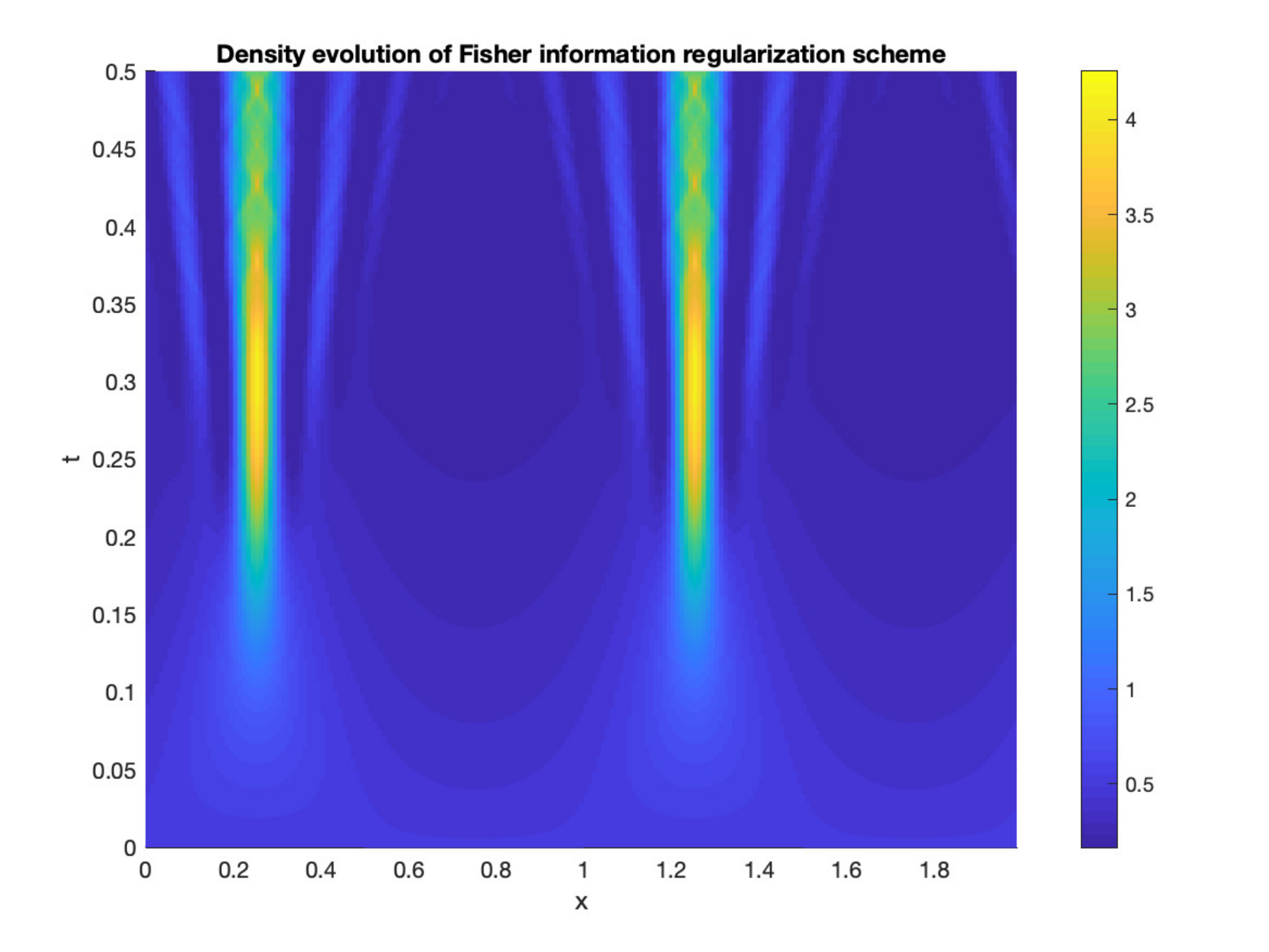}
\end{minipage}}
\subfigure{
\begin{minipage}[b]{0.31\linewidth}
\includegraphics[width=1.15\linewidth]{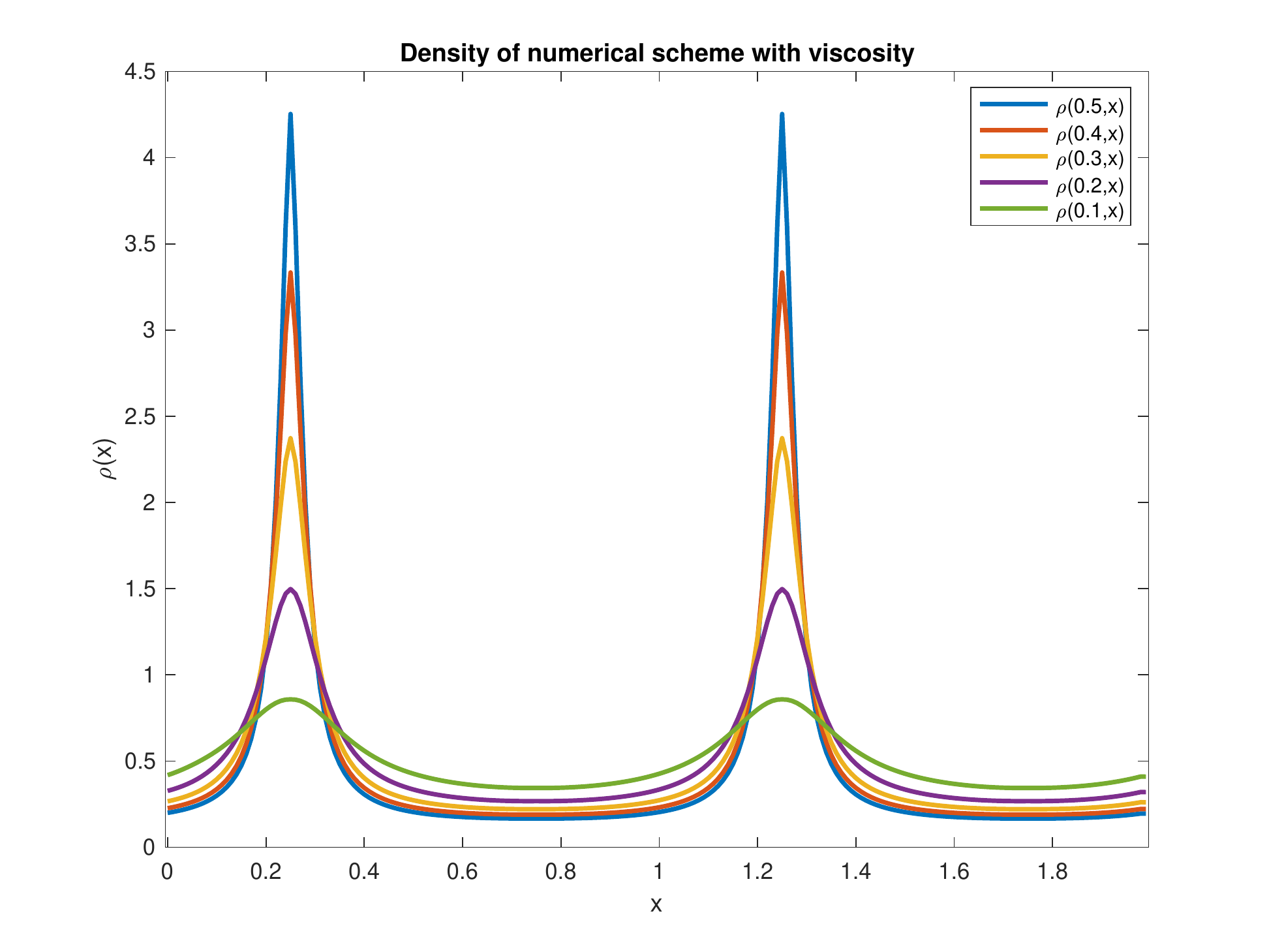}
\includegraphics[width=1.15\linewidth]{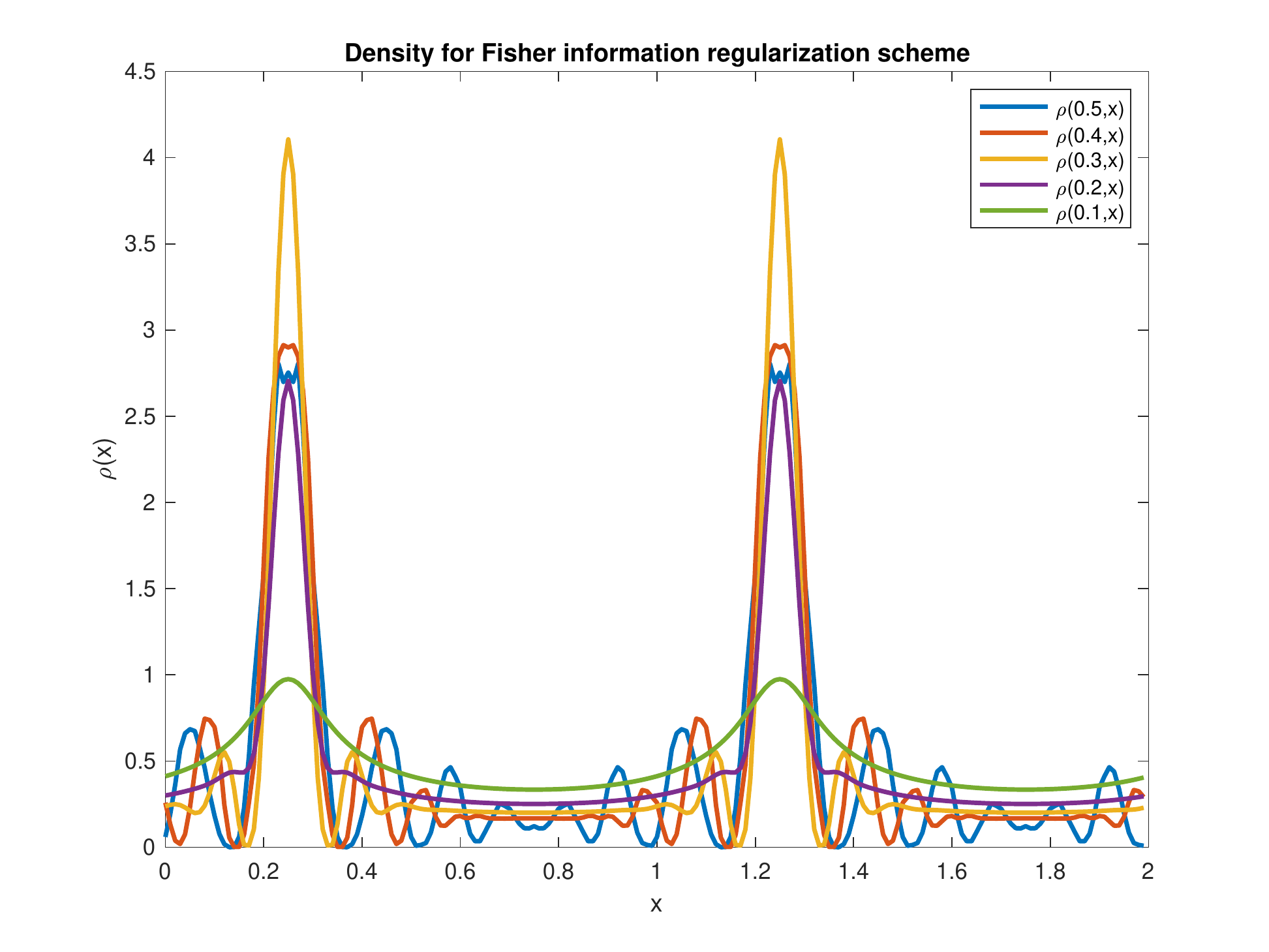}
\end{minipage}}
\subfigure{
\begin{minipage}[b]{0.31\linewidth}
\includegraphics[width=1.15\linewidth]{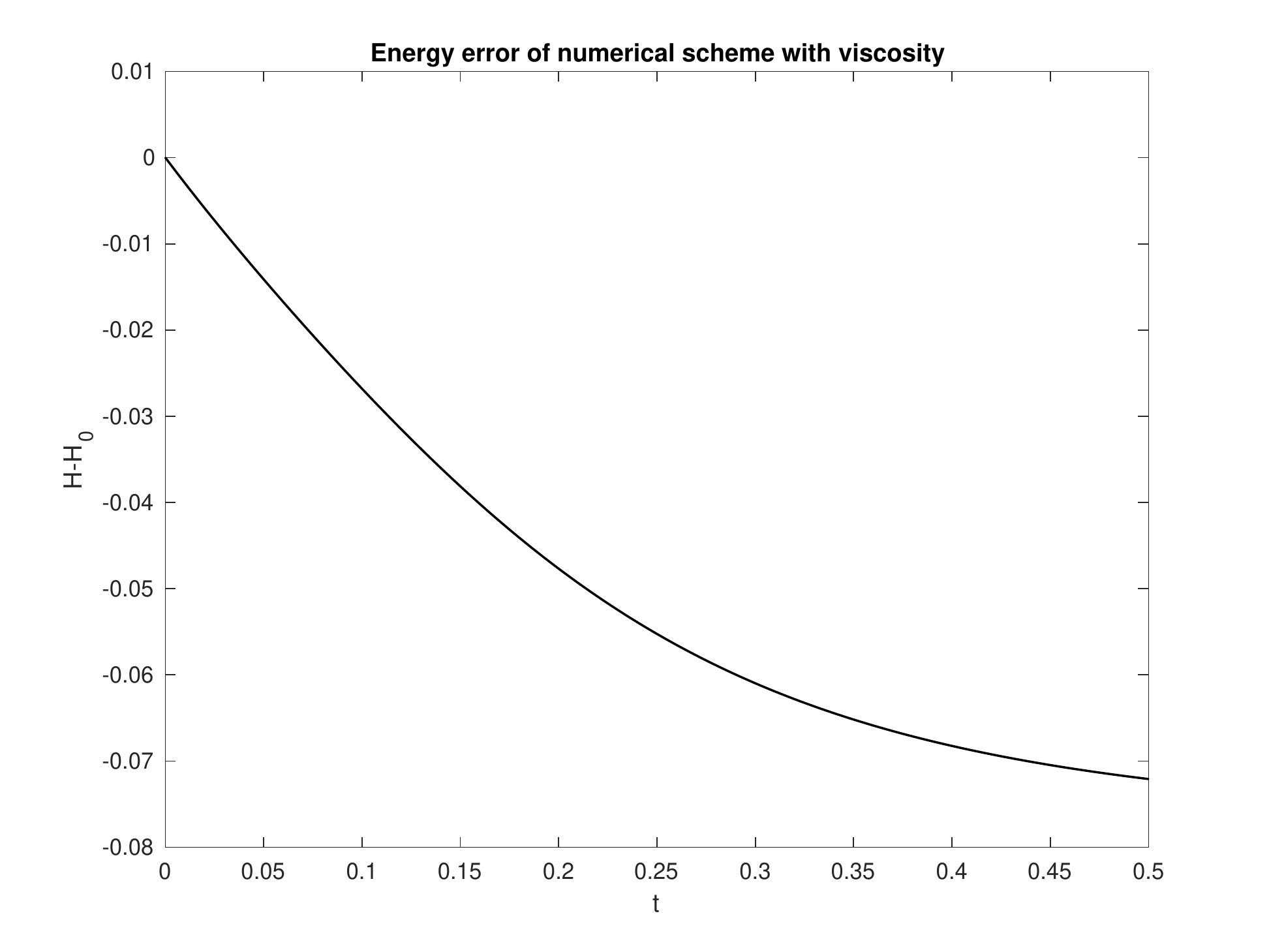}
\includegraphics[width=1.15\linewidth]{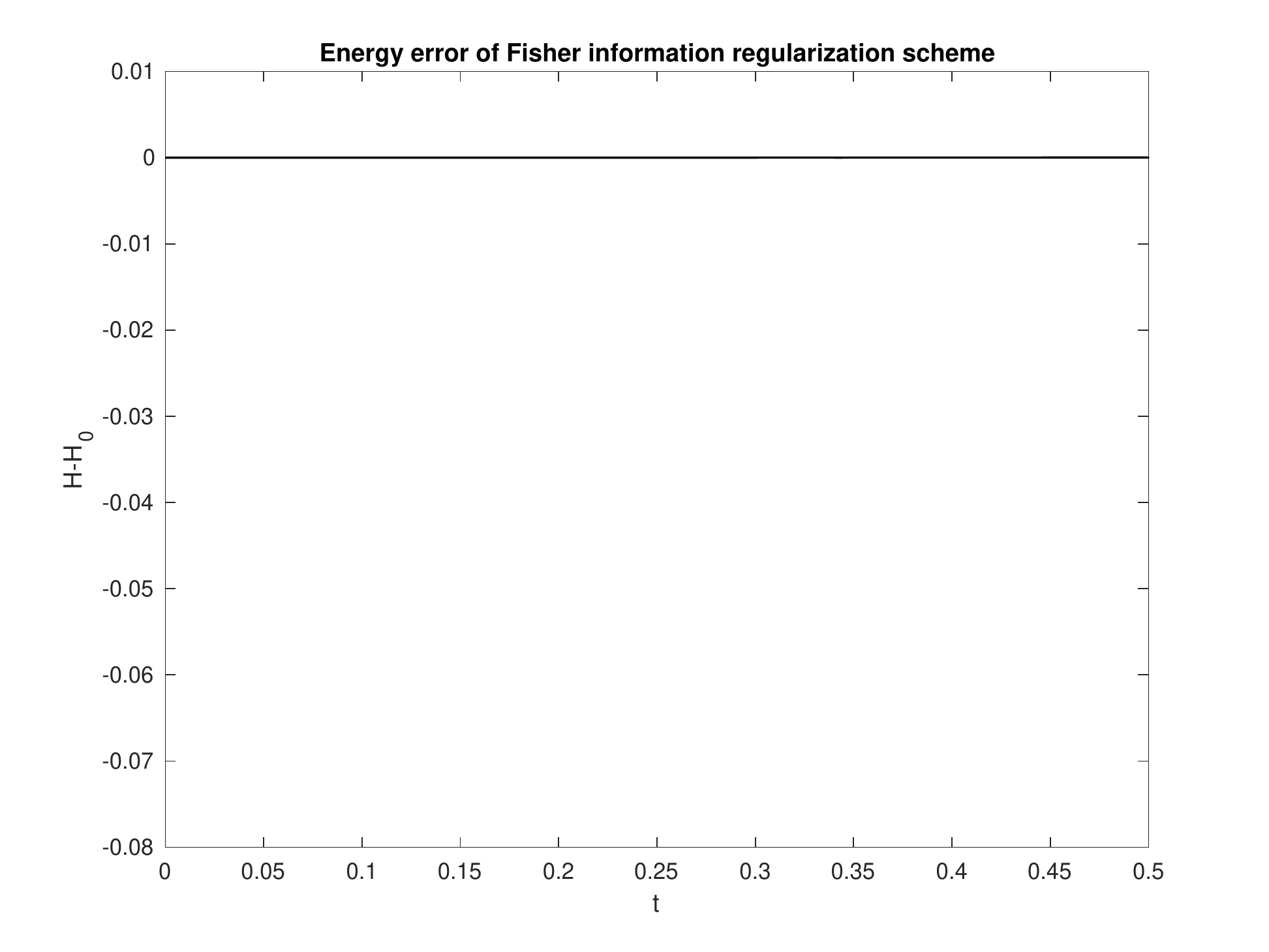}
\end{minipage}}

\centering 
\caption{Contour plot of $\rho(t,x)$ (left), snapshots of $\rho(t,x)$ at $t=(0.5,0.4,0.3,0.2,0.1)$ (right)
and  the energy error before $T=0.5$ (right) for the upwind scheme \eqref{scheme-1} with numerical viscosity (top)
and the Fisher information regularization symplectic scheme \eqref{Fis-mid} (bottom).}
\label{fig-05}
\end{figure}

Figure \ref{dtbeta} shows the relationship between $\beta$ and the largest time step-size $\tau$ in 
\eqref{Fis-mid} that still gives correct approximation to the solution. 
In this numerical test, we use $h=5\times 10^{-2},T=4,$ $\mathcal M=[0,1],$ $S_0(x)=\frac {\sin(\pi x)}{\pi}$, $\rho_0(x)=1$.
The parameter $\beta$ is chosen as five different values, $0.005788,0.005513,0.00525,0.005,0.00476,0.00454$. 
From \ref{dtbeta}, we can see that the relationship between $\frac {H_0}{\beta}$ and $\tau$ is very sensitive when 
$\frac {H_0}{\beta}$ is large.

\begin{figure}
\centering 
\includegraphics[width=2.8in,height=2.4in]{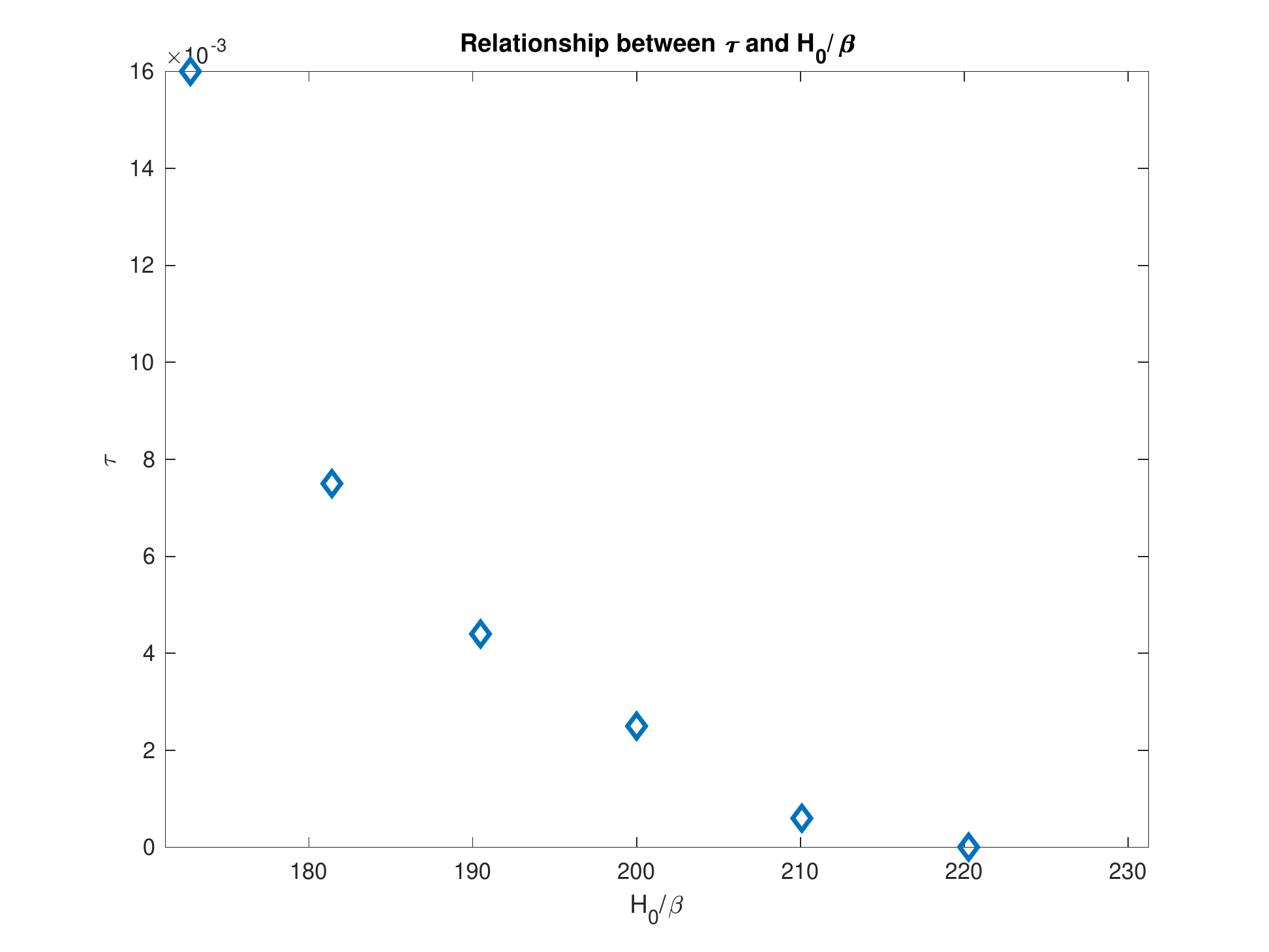}\\
\centering 
\caption{Relationship between $\frac {H_0}{\beta}$ and the largest time step-size $\tau$ that \eqref{Fis-mid} 
with parameter $\beta=0.005788,0.005513,0.00525,0.005,0.00476,0.00454$.}
\label{dtbeta}
\end{figure}

\begin{ex}\label{NLS}[Linear Madelung system]
This is the reformulation of \eqref{Madelung1} as Wasserstein-Hamiltonian system:
\begin{align*}
\partial_t \rho + \nabla\cdot( \rho \nabla S) &=0,\\
\partial_t S +\frac 12|\nabla S|^2+\beta \frac {\partial }{\partial \rho}I(\rho)&=0.
\end{align*}
\end{ex}

We use the scheme \eqref{Fis-mid} for a given $\beta>0$.
Figure \ref{fig-nls} shows the behaviors of $\rho$ and $S$, as well as the energy evolution.
Here for the evolution of $\rho$ and $S$, we choose $\beta=1$, $T=0.5,$ $\tau=10^{-3}$, $h=10^{-2},$ 
$S^0(x)=1/2\sin(2\pi x), \rho^0(x)=1$. 
We also plot the evolution of energy error $\mathcal H(t)-\mathcal H_0$ and mass error 
up to $T=400,$ which shows the good  longtime behaviors of the proposed scheme. 
 
\begin{figure}
\centering

\subfigure[]{
\includegraphics[width=2.4in,height=2.4in]{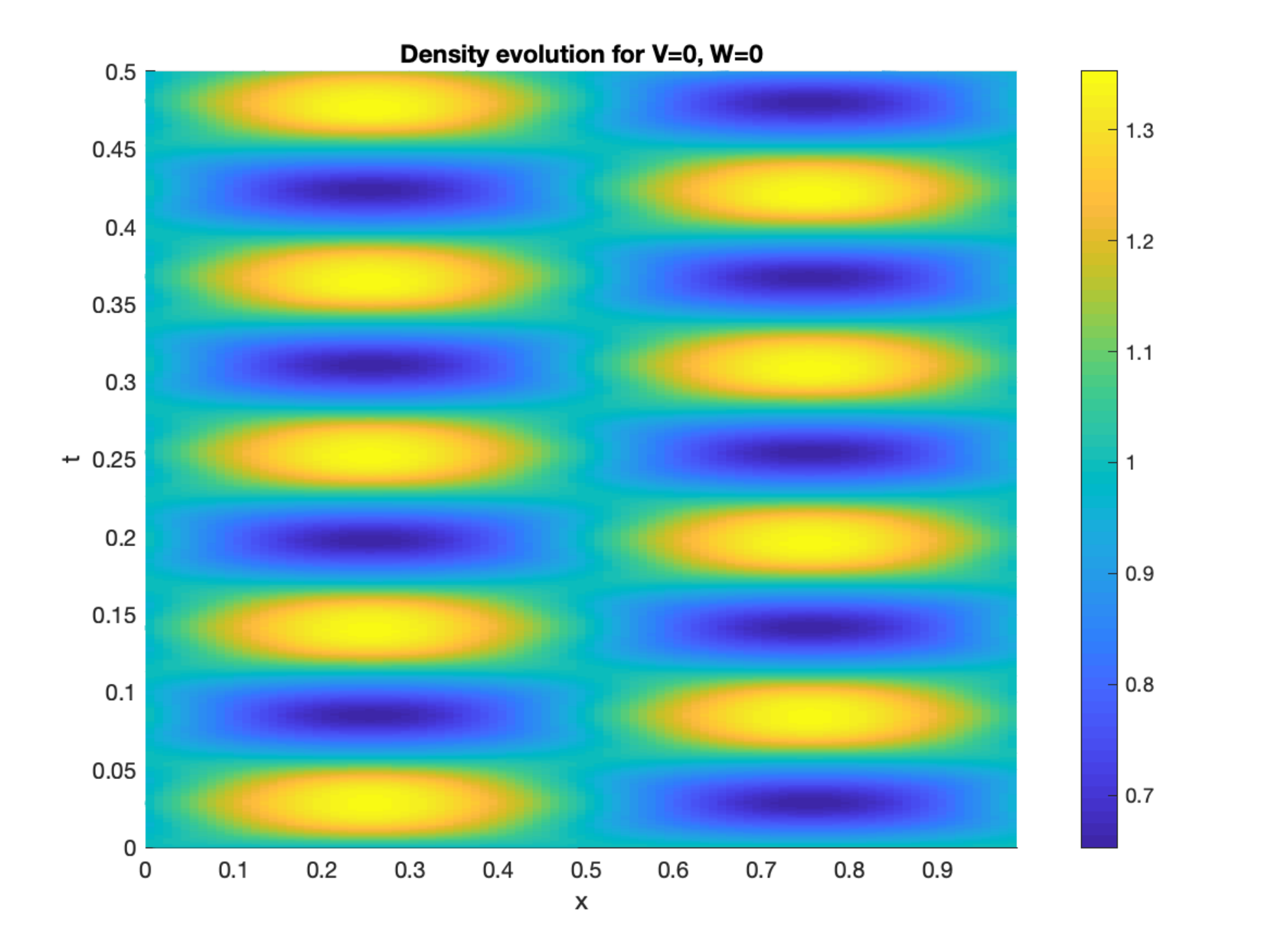}
\includegraphics[width=2.4in,height=2.4in]{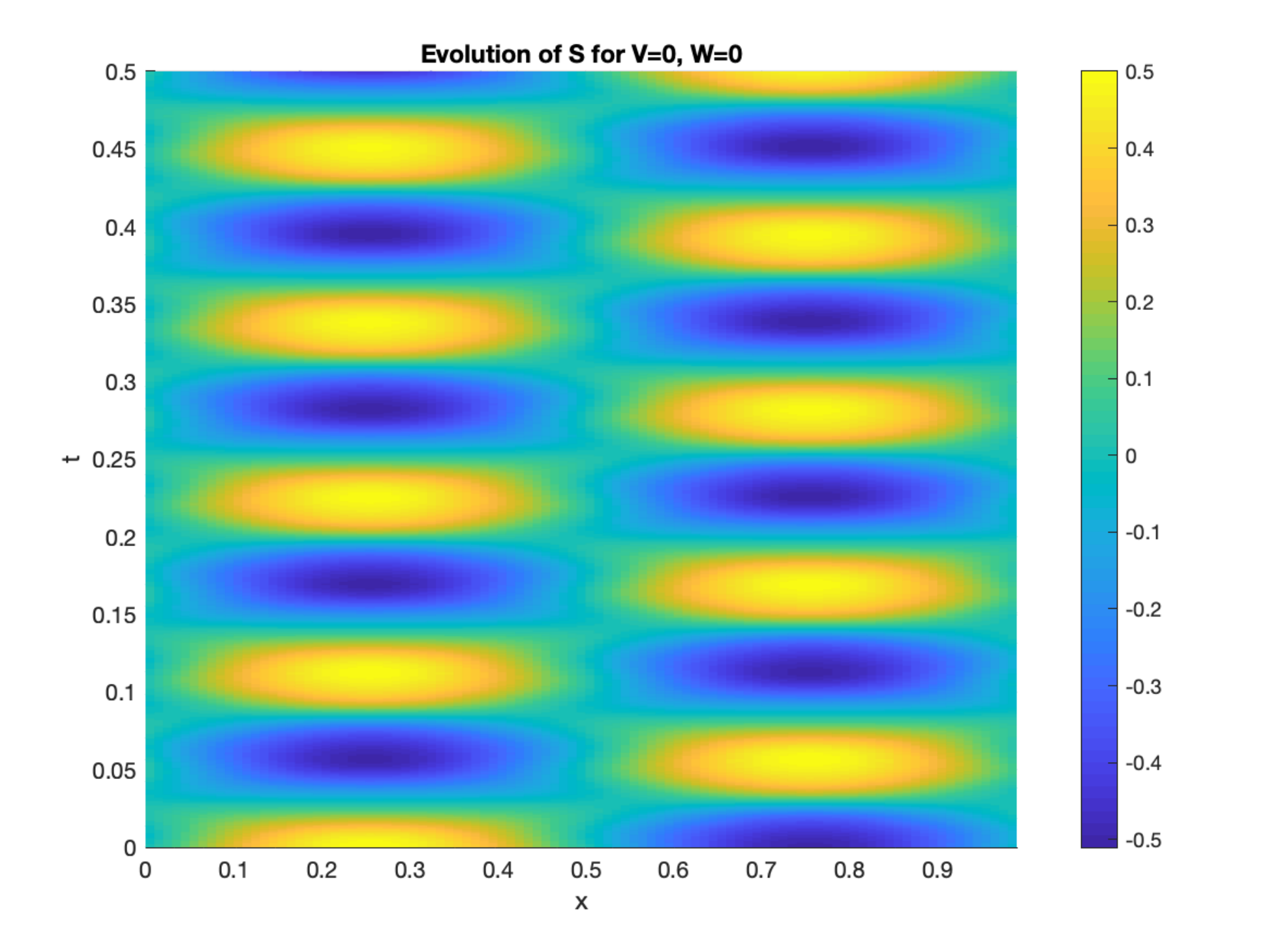}
}

\subfigure[]{
\includegraphics[width=2.4in,height=2.4in]{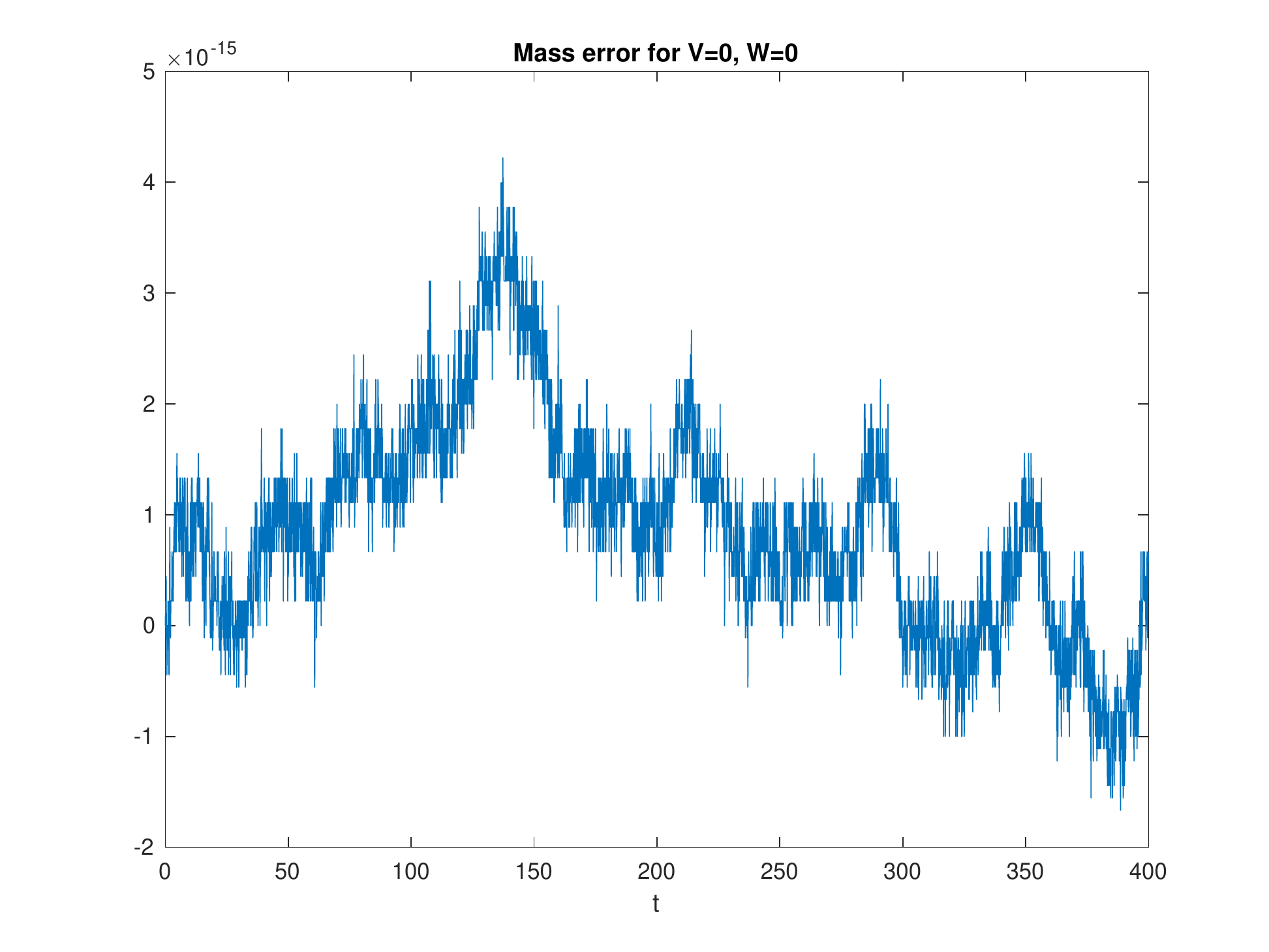}
\includegraphics[width=2.4in,height=2.4in]{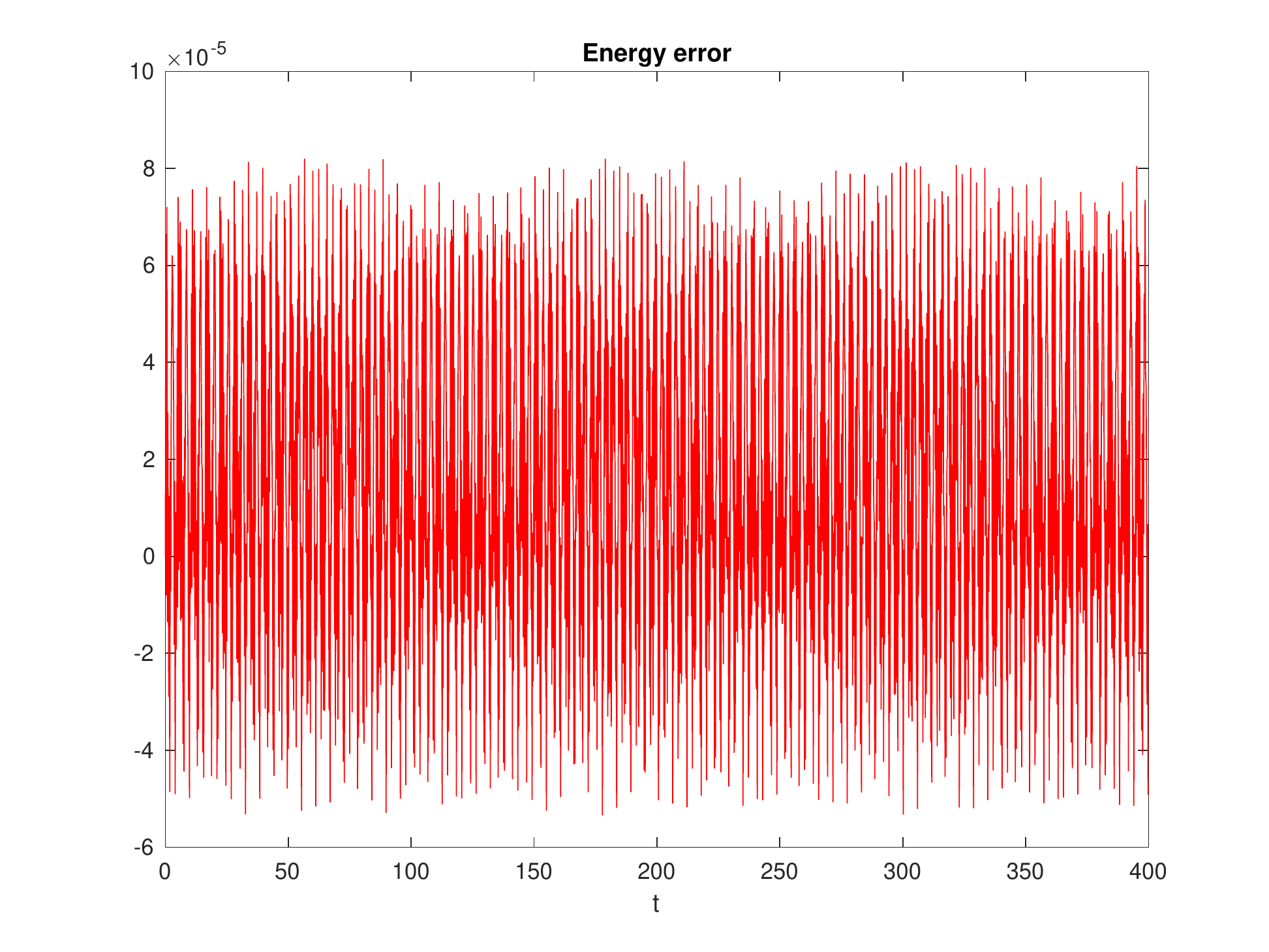}
}

\caption{The evolutions of $\rho$ and $S$ before $T=0.5$ (a), the mass conservation law and the energy error before $T=400$ (b). Note the extremely small scales in the plots.}
\label{fig-nls}
\end{figure}

\section{Acknowledgements}
The research is partially support by Georgia Tech Mathematics Application Portal (GT-MAP)  
and by research grants NSF DMS-1620345. DMS-1830225, and ONR N00014-18-1-2852. 

\bibliographystyle{plain}
\bibliography{bib}

\appendix
\section*{Appendix}
\setcounter{section}{1}

\begin{proof}[Proof of Proposition \ref{low-per}]
It suffices to find a constant $0<c<\frac 1N$ such that $\inf\limits_{0\le \min_i(\rho_i)\le c}I(\rho)\ge \frac {M_0}{\beta}$. 
Since the graph is finite, we have that 
$$\inf\limits_{0\le \min_i I(\rho_i)\le c}I(\rho)=\min_{i\le N}\inf\limits_{0\le \rho_i\le c} I(\rho)$$
Due to convexity of $I(\rho)$ on $0\le \rho_i\le c$ for a fixed $i\le N$, 
and the fact that $I(\rho)$ approaches $\infty$ when $\rho$ approaches the boundary of $\mathcal P_o(G)$, 
$I(\rho)$ takes the minimum at the boundary, i.e., 
$\inf\limits_{0\le \rho_i\le c} I(\rho)=\inf\limits_{\rho_i= c} I(\rho)$ on $\mathcal P_o(G)$.
Because of the periodic boundary condition, without loss of generality we can assume that $\rho_1=c$. 
By calculating the Hessian matrix of $I(\rho)$, we get for any $\sigma\neq 0$,
\begin{align*}
\sigma^T \text{Hess} I(\rho)\sigma
&=\sum_{i=3}^{N-1}(\frac 1{\rho_i^2}(\rho_i+\rho_{i+1}+\rho_{i-1}))\sigma_i^2\\
&\quad+\sum_{i=3}^{N-1}(\frac 1{\rho_i\rho_{i+1}} (\rho_{i}+\rho_{i+1})\sigma_{i}\sigma_{i+1} +\frac 1{\rho_i\rho_{i-1}} (\rho_{i}+\rho_{i-1})\sigma_{i}\sigma_{i-1})\\
&\quad+ \frac 1{\rho_2^2}(2\rho_i+\rho_3+c)+\frac 1{\rho_2\rho_3}(\rho_2+\rho_3)\\
&\quad +\frac 1{\rho_N^2}(2\rho_N+\rho_{N-1}+c)+\frac 1{\rho_N\rho_{N-1}}(\rho_N+\rho_{N-1})\\
&=\sum_{i=2}^{N-1}(\rho_{i}+\rho_{i+1})(\frac {\sigma_{i}}{\rho_{i}}-\frac {\sigma_{i+1}}{\rho_{i+1}})^2+
\frac 1{\rho_2^2}(\rho_2+c)\sigma_2^2+\frac 1{\rho_N^2}(\rho_N+c)\sigma_N^2
> 0,
\end{align*}
which implies strict convexity of $I(c,\cdot)$ on $\sum_{i=2}^N\rho_i=1-c$. 
Using the Lagrange multiplier technique on 
$I(c,\rho_2,\cdots,\rho_N)-\lambda(\sum_{i=2}^N\rho_i-1+c)$, we get that the unique minimum point satisfies  
\begin{equation}\begin{split}\label{Lag-mul-per}
\phi(\frac {c}{\rho_2})+\phi(\frac {\rho_3}{\rho_2})&=\lambda,\\
\phi(\frac {\rho_{i-1}}{\rho_i})+\phi(\frac {\rho_{i+1}}{\rho_i})&=\lambda, \; \text{if}\; 3\le i \le N-1,\\
\phi(\frac {\rho_{N-1}}{\rho_N})+\phi(\frac {c}{\rho_N})&=\lambda,
\end{split}\end{equation}
where $\phi(t)=1-t-\log(t)$.
We claim that $\rho_{N-i+1}=\rho_{i+1}$, for $i=1,\cdots,\frac {N-1}2,$  if $N-1$ is even number. 
When $N-1$ is odd, we have $\rho_{N-i+1}=\rho_{i+1}$, for $i=1,\cdots, [\frac {N-1}2]$, where $[s]$ is 
the largest integer smaller than or equal to $s\in \mathbb R$.

To prove this claim, it suffices to show that $\rho_2=\rho_N$. Assume that $\rho_2>\rho_N$,
Due to the monotonicity of $\phi$, we have $\frac c{\rho_2}<\frac c{\rho_N}$,
\begin{align}\label{ite-per-0}
\frac {\rho_3}{\rho_2}>\frac {\rho_{N-1}}{\rho_N},
\frac {\rho_4}{\rho_3}>\frac {\rho_{N-2}}{\rho_{N-1}},
\cdots,
\frac {\rho_{i+2}}{\rho_{i+1}}>\frac {\rho_{N-i}}{\rho_{N-i+1}}, \;\text{for}\; 1\le i\le [\frac {N-1}2].
\end{align}
If $N-1$ is even, we obtain that
\begin{align*}
\phi(\frac {\rho_{\frac {N-1}2+2}}{\rho_{\frac {N-1}2+1}})
<\phi(\frac {\rho_{\frac {N-1}2+1}}{\rho_{\frac {N-1}2+2}}),
\end{align*}
which leads to $\frac {\rho_{\frac {N-1}2+1}}{\rho_{\frac {N-1}2+2}}<\frac {\rho_{\frac {N-1}2+2}}{\rho_{\frac {N-1}2+1}},$ 
i.e., $\rho_{\frac {N-1}2+2}>\rho_{\frac {N-1}2+1}.$
Thus, we can conclude from \eqref{ite-per-0} that 
\begin{align*}
\frac {\rho_N}{\rho_2}>\frac {\rho_{N-1}}{\rho_3}
>\cdots>\frac {\rho_{\frac {N-1}2+2}}{\rho_{\frac {N-1}2+1}}>1, 
\end{align*}
which contradicts the assumption $\rho_2>\rho_N$. 
If $N-1$ is odd, similar arguments yield that 
\begin{align*}
\phi(\frac {\rho_{[\frac {N-1}2]+2}}{\rho_{[\frac {N-1}2]+1}})
<\phi(\frac {\rho_{[\frac {N-1}2]+2}}{\rho_{[\frac {N-1}2]+3}}),
\end{align*}
which implies that $\rho_{[\frac {N-1}2]+3}>\rho_{[\frac {N-1}2]+1}$.
Thus from \eqref{ite-per-0}, we have that
$$\frac {\rho_N}{\rho_2}>\frac {\rho_{N-1}}{\rho_3}>\cdots>\frac {\rho_{[\frac {N-1}2]+3}}{\rho_{[\frac {N-1}2]+1}}>1,$$ 
which contradicts the assumption $\rho_2>\rho_N$.
One can show that $\rho_2<\rho_N$ is also impossible by the same arguments. As a consequence, 
$\rho_2=\rho_N$. By further using   \eqref{Lag-mul-per}, we immediately get 
$\rho_{N-i+1}=\rho_{i+1}$, for $i=1,\cdots, [\frac {N-1}2].$ 

Now, we are going to show that the extreme point possesses the monotonicity  along the path starting from $a_1$. 
Indeed, $\rho_i$ is increasing when $d_{1,i+1}$ is increasing for 
$i\le [\frac {N-1}2]$ if $N$ is odd and  for $i\le [\frac {N-1}2]+1$ if $N$ is even. 
We use Figure \ref{fig-node} to illustrate these two different cases.

\begin{figure}
\centering

\includegraphics[width=2.45in,height=2.2in]{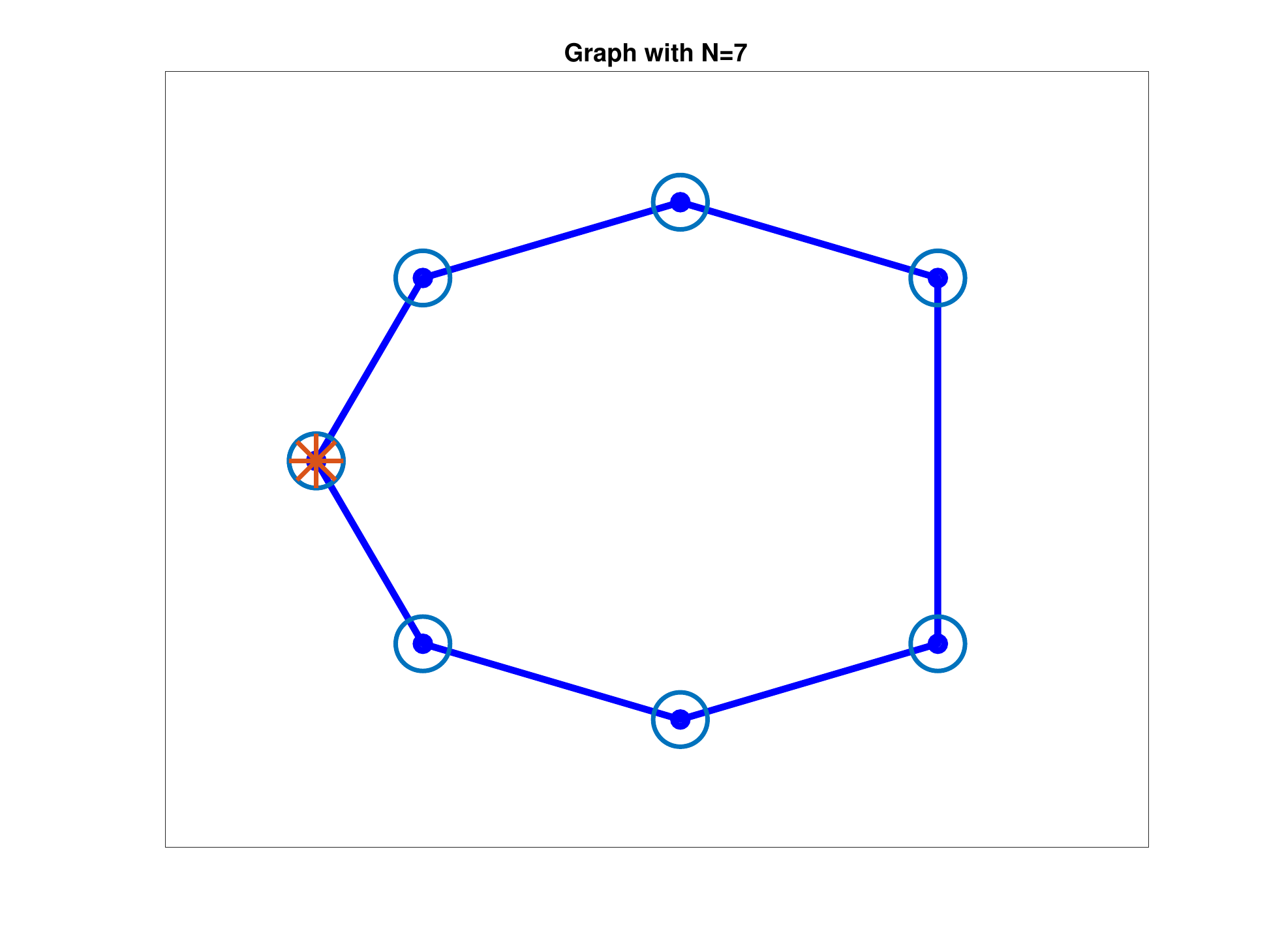}
\includegraphics[width=2.45in,height=2.2in]{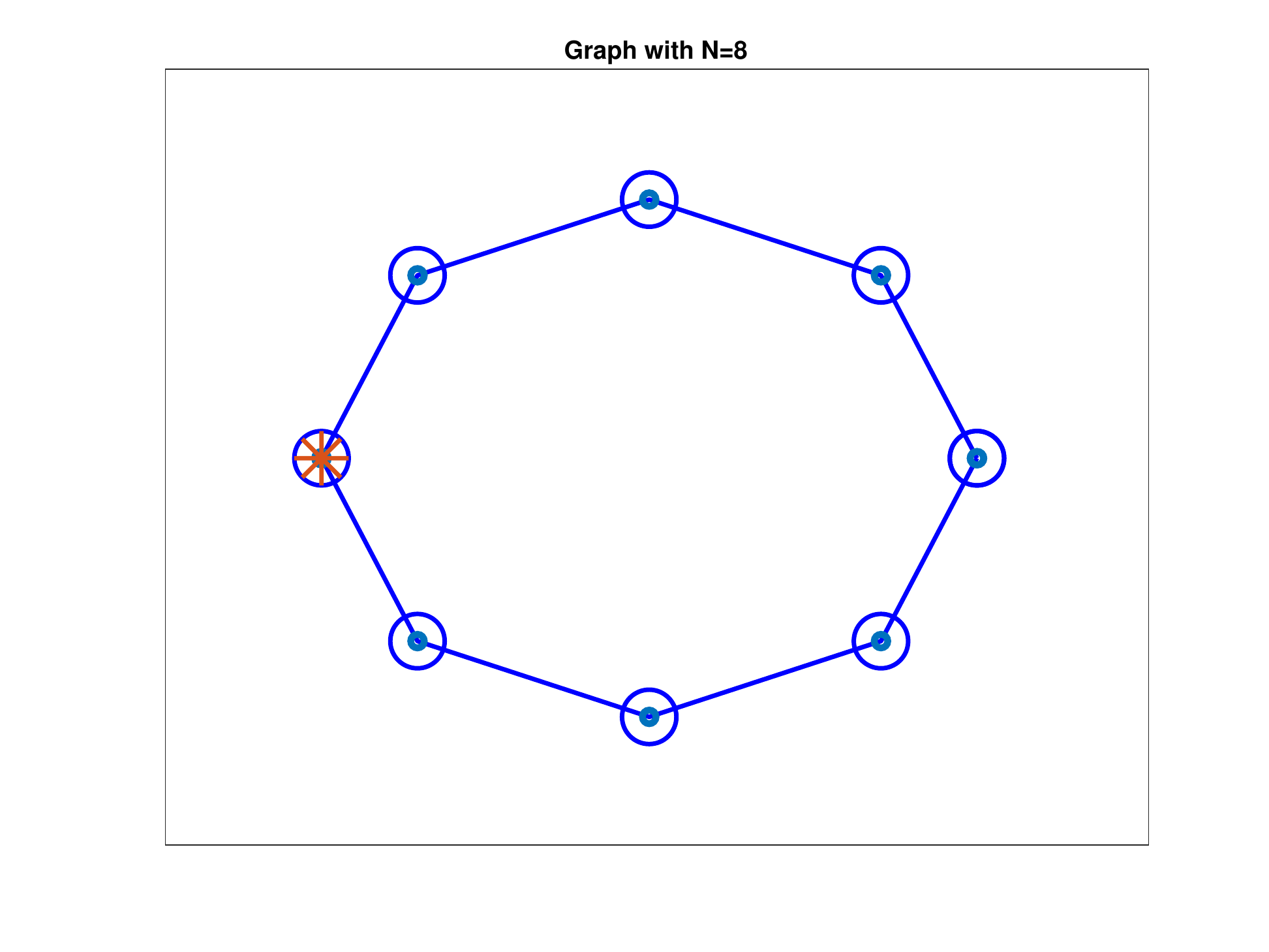}

\caption{The picture of the graph with $N=7$ (left) and with $N=8$ (right), where the red node represents $v_1$.}
\label{fig-node}
\end{figure}

Step 1: $\lambda>0$. 
Since $\lambda=0$ if and only if $\rho_i=\frac 1N$, then $I(\rho)=0$ which contradicts the
fact that $\inf_{\rho}I(\rho)>0$.
Assume that $\lambda< 0$. Then \eqref{Lag-mul-per}, together with the symmetry 
$\rho_{i+1}=\rho_{N-i+1}, i=1,\cdots,[\frac {N-1}{2}]$,  implies that 
when $N-1$ is even, it holds that 
\begin{equation}\label{Lag-mul-per-2}\begin{split}
\phi(\frac {\rho_{i-1}}{\rho_i})+\phi(\frac {\rho_{i+1}}{\rho_i})&=\lambda, \; \text{if}\; 2\le i \le \frac {N-1}2-1,\\
\phi(\frac {\rho_{\frac {N-1}2}}{\rho_{\frac {N-1}2+1}})&=\lambda.
\end{split}\end{equation}
Since $\lambda< 0$, we obtain that 
\begin{align*}
\rho_{\frac {N-1}2+1}<{\rho_{\frac {N-1}2}}<\cdots< \rho_2<\rho_1=c,
\end{align*}
which contradicts the fact that $\sum_{i=2}^{N}\rho_i=1-c$. 
When $N-1$ is odd, then \eqref{Lag-mul-per} and symmetry of $\rho_i$ imply that  
\begin{equation}\label{Lag-mul-per-3}\begin{split}
\phi(\frac {\rho_{i-1}}{\rho_i})+\phi(\frac {\rho_{i+1}}{\rho_i})&=\lambda, \; \text{if}\; 2\le i \le [\frac {N-1}2],\\
2\phi(\frac {\rho_{[\frac {N-1}2]+1}}{\rho_{[\frac {N-1}2]+2}})&=\lambda.
\end{split}\end{equation}
Then we get $\rho_{[\frac {N-1}2]+2}<{\rho_{[\frac {N-1}2]+1}}<\cdots< \rho_2<\rho_1=c,$ which is also not possible.
Thus it holds that $\lambda>0$. This indicates that 
$$\rho_{[\frac {N-1}2]+2}>{\rho_{[\frac {N-1}2]+1}}>\cdots>\rho_2>\rho_1=c.$$

Step 2: $\frac {\rho_{i+1}}{\rho_{i}}$ is strictly decreasing. 
If $N-1$ is even, $\frac {\rho_{i+1}}{\rho_{i}}$ is strictly decreasing for $1\le i\le [\frac {N-1}2]$.
According to \eqref{Lag-mul-per-2}, it holds that 
\begin{align*}
\phi(\frac {\rho_{\frac {N-1}2-1}}{\rho_{\frac {N-1}2}})&=\lambda-\phi(\frac {\rho_{\frac {N-1}2+1}}{\rho_{\frac {N-1}2}})=\phi(\frac {\rho_{\frac {N-1}2}}{\rho_{\frac {N-1}2+1}})-\phi(\frac {\rho_{\frac {N-1}2+1}}{\rho_{\frac {N-1}2}}),\\
\phi(\frac {\rho_{\frac {N-1}2-i-1}}{\rho_{\frac {N-1}2-i}})&=\lambda-\phi(\frac {\rho_{\frac {N-1}2-i+1}}{\rho_{\frac {N-1}2-i}})=\phi(\frac {\rho_{\frac {N-1}2-i}}{\rho_{\frac {N-1}2-i+1}})-\phi(\frac {\rho_{\frac {N-1}2-i+1}}{\rho_{\frac {N-1}2-i}})+\phi(\frac {\rho_{\frac {N-1}2-i+2}}{\rho_{\frac {N-1}2-i+1}}), 
\end{align*}
where $i=1,\cdots,\frac {N-5}2$.
The monotonicity of $\rho_i$, $i\le \frac {N-1}2$, together with $\lambda>0$, leads to 
\begin{align*}
\phi(\frac {\rho_{\frac {N-1}2-i-1}}{\rho_{\frac {N-1}2-i}})&>\phi(\frac {\rho_{\frac {N-1}2-i}}{\rho_{\frac {N-1}2-i+1}}), \;\text{for}\; i=0,\cdots,\frac {N-5}2.
\end{align*}
If $N-1$ is odd, $\frac {\rho_{i+1}}{\rho_{i}}$ is strictly decreasing for $1\le i\le [\frac {N-1}2]+1$.
From \eqref{Lag-mul-per-3}, it follows that 
\begin{align*}
\phi(\frac {\rho_{[\frac {N-1}2]}}{\rho_{[\frac {N-1}2]+1}})&=\lambda-\phi(\frac {\rho_{[\frac {N-1}2]+2}}{\rho_{[\frac {N-1}2]+1}})=2\phi(\frac {\rho_{[\frac {N-1}2]+1}}{\rho_{\frac {[N-1}2]+2}})-\phi(\frac {\rho_{[\frac {N-1}2]+2}}{\rho_{[\frac {N-1}2]+1}}),\\
\phi(\frac {\rho_{[\frac {N-1}2]-i-1}}{\rho_{[\frac {N-1}2]-i}})&=\lambda-\phi(\frac {\rho_{[\frac {N-1}2]-i+1}}{\rho_{[\frac {N-1}2]-i}})=\phi(\frac {\rho_{[\frac {N-1}2]-i}}{\rho_{[\frac {N-1}2]-i+1}})-\phi(\frac {\rho_{[\frac {N-1}2]-i+1}}{\rho_{[\frac {N-1}2]-i}})+\phi(\frac {\rho_{[\frac {N-1}2]-i+2}}{\rho_{[\frac {N-1}2]-i+1}}),
\end{align*}
where $i=0,\cdots,[\frac {N-1}2]+1$.
From the monotonicity of $\phi$, it follows that $\frac {\rho_{i+1}}{\rho_{i}}$ is strictly decreasing for $1\le i\le [\frac {N-1}2]$. 

Step 3: Lower bound for $\frac {\rho_{i+1}}{\rho_{i}}, i=1,\cdots, [\frac {N-1}2].$
We first deal with the case that $N-1$ is even. Due to monotonicity of $\frac {\rho_{i+1}}{\rho_{i}}$, its 
minimum is $k:=\frac {\rho_{[\frac {N-1}2]+1}}{\rho_{[\frac {N-1}2]}}$. 
Since $\sum_{i}\rho_i=1$, we have 
\begin{align*}
c+2\sum_{i=2}^{[\frac {N-1}2]}\rho_i&=c(1+2\sum_{i=2}^{[\frac {N-1}2]}\frac {\rho_i} c)=1.  
\end{align*}
To find a lower bound of $\frac {\rho_{i+1}}{\rho_{i}}$, it suffices to find an upper bound such that 
\begin{align*}
1+\sum_{i=2}^{[\frac {N-1}2]}k^{i-1}=\frac {k^{[\frac {N-1}2]}-1}{k-1}<\frac {1+c}{2c}.
\end{align*} 
Let $k\le (\frac {1-c}{2c[\frac {N-1} 2]})^{\frac 1{[\frac {N-1}2]}}$. Then it holds that 
\begin{align*}
\sum_{i=2}^{[\frac {N-1}2]+1}k^{i-1}\le  [\frac {N-1}2]k^{[\frac {N-1}2]}\le \frac {1-c}{2c}.
\end{align*}
Finally, we get that 
\begin{align*}
\inf_{\rho_1=c} I(\rho)&=2 \sum_{i=1}^{[\frac {N-1}2]}(\log(\rho_{i}^*)-\log(\rho_{i+1}^*))(\rho_{i}^*-\rho_{i+1}^*)\\
&\ge {2}(\log(\frac {\rho_{[\frac {N-1}2]+1}^*}{\rho_{[\frac {N-1}2]}^*}))(\rho_{[\frac {N-1}2]+1}^*-c).
\end{align*}
Since there exists at least $\rho_{j}^*, j\le N$ such that $\rho_j^*>\frac {1-c}{N-1}$, 
thus it holds that
\begin{align}\label{low-est}
\inf_{\rho_1=c} I(\rho)
&\ge  {2} (\log(\frac {\rho_{[\frac {N-1}2]+1}^*}{\rho_{[\frac {N-1}2]}^*}))(\frac {1-c}{N-1}-c) \ge 
2\log(k)(\frac {1-c}{N-1}-c)\ge \frac {M}{\beta}.
\end{align}
Now, we are able to show the desired lower bound estimate.
If there exists $\frac 12\min_i \rho_i(0) N$
$ \le \alpha<\min_i \rho_i(0) N, c=\alpha \frac {1}N$ such that $\inf_{\rho_1=c}I(\rho)\ge\frac {\mathcal H_0}{\beta}$, then  
$$\sup_{t\ge0}\min_{i\le N}\rho_i(t)\ge \frac 12\min_i \rho_i(0).$$
Otherwise, $c<\frac 1N\alpha,$ for $\alpha\le \frac 12\min_{i\le N}\rho_i(0) N$. 
From the estimate \eqref{low-est}, it follows that if 
$c< \frac 1{1+2[\frac {N-1}2]\exp(\frac {M_0(N-1)([\frac {N-1}2])}{2\beta(1-\alpha)})}$, then 
$\inf_{\rho_1=c}I(\rho)>\frac {M}{\beta}.$
Based on the above estimates, we have the following lower bound for $\rho$,
\begin{align*}
\sup_{t\ge0}\min_{i\le N}\rho_i(t)\ge 
\frac 1{1+2[\frac {N-1}2]\exp(\frac {M(N-1)[\frac {N-1}2]}{2\beta(1-\alpha)})}\ge 
\frac 1{1+2[\frac {N-1}2]\exp(\frac { M(N-1)[\frac {N-1}2]}{\beta})}.
\end{align*}
Thus, it holds that 
\begin{align*}
\sup_{t\ge0}\min_{i\le N}\rho_i(t)\ge \min( \frac 12\min_i \rho_i(0), 
\frac 1{1+2[\frac {N-1}2]\exp(\frac {M(N-1)[\frac {N-1}2]}{\beta})}).
\end{align*}
Similar arguments yield the estimate when $N-1$ is odd, 
\begin{align*}
\sup_{t\ge0}\min_{i\le N}\rho_i(t)\ge \min( \frac 12\min_i \rho_i(0), \frac 1{1+2([\frac {N-1}2]+1)\exp(\frac {M(N-1)([\frac {N-1}2]+1)}{\beta})}).
\end{align*}
\end{proof}

\bigskip
\begin{proof}[Proof of Proposition \ref{low-new}]
We use an induction argument and similar techniques to those used in the proof of Proposition \ref{low-per}. 
Like the proof of Proposition \ref{low-per}, it suffices to find the largest  $0<c<\frac 1N$ such that 
$\inf\limits_{0\le \min_i(\rho_i)\le c}I(\rho)\ge \frac {M}{\beta}$. Since the graph is finite and $I(\rho)$
is convex, we have that 
$$\inf\limits_{0\le \min_i I(\rho_i)\le c}I(\rho)=\min_{i\le N}\inf\limits_{\rho_i=c} I(\rho).$$
When $N=3$, then the graph only has two boundary nodes and   
we only need to consider the case that $\rho_1=c$ and $\rho_2=c$, due to the symmetry on boundary nodes. 
When $\rho_1=c$, the Lagrange multiplier method yields that the extreme point satisfies 
\begin{align*}
\phi(\frac {c}{\rho_2})+\phi(\frac {\rho_3}{\rho_2})&=\lambda,\quad
\phi(\frac {\rho_2}{\rho_3})=\lambda, \quad \phi(t)=1-t-\log(t)\ .
\end{align*}
Then it is not hard to get that $\lambda>0$, $\rho_3>\rho_2>c$ and $\frac {c}{\rho_2}<\frac {\rho_2}{\rho_3}$.
When $\rho_2=c$, the Lagrange multiplier method yields that the extreme point satisfies 
\begin{align*}
\phi(\frac {c}{\rho_1})=\lambda,\quad
\phi(\frac {c}{\rho_3})=\lambda,
\end{align*}
and so we obtain that $\lambda>0$, $\rho_3>c, \rho_1>c$.  From these, 
similarly to the proof of Proposition \ref{low-per}, we obtain 
$$\sup_{t}\min_{i}\rho_{i}(t)\ge 
\min\Big(\frac 12 \min_i\rho_i(0),\frac {1}{1+2\exp(4\frac {M}{\beta})}\Big),$$

Now we proceed with the induction steps.
Assume that for the graph with $N-1$ nodes, if $\inf_{j\le N-1}\inf_{|\rho_j|\le c}I(\rho)= \inf_{\rho_i=c}I(\rho)$ for 
some $i,$ then we get $\lambda>0$ in the Lagrange multiplier technique, and that   
for any path $a_{l_0}a_{l_1}a_{l_2} \cdots a_{l_m}$, $m\le N-1$, starting from $a_{l_0}=a_i$ to a boundary point 
$a_{l_m}$, the probability density $\rho_{l_j}$, $0\le j\le m$ is increasing and 
$\frac {\rho_{l_{j+1}}}{\rho_{l_{j}}}, 0\le j\le m-1$, is decreasing.
We are going to prove that the above statement also holds for the graph with $N$ nodes. 
Let $\inf_{j\le N}\inf_{|\rho_j|\le c}I(\rho)= \inf_{\rho_i=c}I(\rho)$ for some $i$. 
Then either $a_i$ is a boundary vertex of the the graph, or $a_i$ is an interior vertex of the graph.

Case 1: $a_i$ is an interior node of the graph. 
Assume that the numbers of edges connecting to $a_i$ is $n_i$. 
By using the Lagrange multiplier method and taking the partial derivative with respect to $\rho_j$, $j\neq i$,
we obtain $N-1$ equations. Since $v_i$ is an interior node, these $N-1$ equations
can be rewritten as $n_i$ systems of equations which are related to $n_i$ subgraphs sharing the same node $a_i$.
Notice that the number of the nodes of each subgraphs is smaller than $N-1$.
According to our induction assumption,  it holds that $\lambda>0$, 
for any path $a_{l_0}a_{l_1}a_{l_2} \cdots a_{l_m}, m\le N-1$, from $a_{l_0}=a_i$ to a boundary point $a_{l_m}$,
the probability density $\rho_{l_j}, 0\le j\le m$ 
is increasing and $\frac {\rho_{l_{j+1}}}{\rho_{l_{j}}}, 0\le j\le m-1$, is decreasing.
 
Case 2: $a_i$ is a boundary node of the graph. By the Lagrange multiplier method, with
$\phi(t)=1-t-\log(t)$, we obtain
\begin{align*}
\sum_{l\in N(j)}\phi(\frac {\rho_{l}}{\rho_{j}})=\lambda,\; \text{if} \;j\notin N(i).\\
\sum_{l\in N(j), l\neq i}\phi(\frac {\rho_{l}}{\rho_{j}})+\phi(\frac {c}{\rho_{j}})=\lambda, \;\text{if}\; j \in N(i).
\end{align*}
We first show that $\lambda>0$. Assume that $\lambda\le 0$.
If $V_B$ has only two nodes, then by the monotonicity of $\phi$,  it holds that $\rho$ is decreasing along the path
$a_{l_0}a_{l_1}a_{l_2} \cdots a_{l_m}$ from $a_{l_0}\neq a_i$ to an interior node $a_{l_m}$. 
From the connectivity of the graph, we have  $c\ge \rho_{l}, l\le N$, which
leads to the contradiction that $\sum_{l=1}^N\rho_{l}=1\le Nc<1$. 

If $V_B$ has more than two nodes, then there must exist an interior node with at least 3 outgoing edges.
 Denote $a_e$ the  farthest  interior node from $a_i$ which has 3 or more outgoing edges. Since $a_e$ is connected to $a_i$ by a road, we denote $a_{e_1}$ the point that is closet to $a_e$ and belongs to such road. 
Then at the node $a_e$, we have
$\sum_{l\in N(e), l\neq e_1}\phi(\frac {\rho_{l}}{\rho_{e}})+
\phi(\frac {\rho_{e_1}}{\rho_{e}})=\lambda\le 0$.  
Denote $a_{b_l}, l\in N(e), l\neq e_1$ as the corresponding boundary node which contains the edge $a_la_e$.
Due to the monotonicity of $\phi$, $\lambda\le 0$ and the fact that $a_l$ belongs to the road only connecting $a_{b_l}$ and $a_e$, we have $\phi(\frac {\rho_{l}}{\rho_{e}})\ge 0$ for $l\in N(e), l\neq e_1$. This implies that the density along the road from $a_{b_l}$ to $a_e$ is decreasing and that $\phi(\frac {\rho_{e_1}}{\rho_{e}})\le 0$. Then we can view $a_e$ as a new boundary node of the left subgraph which is obtained by ignoring all the roads from $a_{b_l}$ to $a_e$ and repeat the above procedures until we get a subgraph which satisfying $a_i\in V_B$ and $V_B$ has only two nodes. 
And on the graph with two boundary nodes, the density is decreasing from another boundary point to $a_i$. This will leads to the contradiction that $\sum_{l=1}^N\rho_{l}=1\le Nc<1$. Thus  we conclude that $\lambda>0.$ 
 Following similar arguments, we obtain the increasing property of $\rho_{l_j}$  along the path $a_{l_0}a_{l_1}a_{l_2} \cdots a_{l_m}$, $m\le N-1$ from $a_{l_0}=a_i$ to any boundary node $a_{l_m}\in V_B$. 

Next, we show the decreasing property of $\frac {\rho_{l_{j+1}}}{\rho_{l_j}}$.
Since 
\begin{align*}
\sum_{l\in N(l_1),l\neq i, l_{2}}\phi(\frac {\rho_{l}}{\rho_{l_1}})+\phi(\frac {c}{\rho_{l_1}})+\phi(\frac {\rho_{l_2}}{\rho_{l_1}})&=\lambda>0,\\
\sum_{l\in N(l_j),l\neq l_{j-1},l_{j}}\phi(\frac {\rho_{l}}{\rho_{l_j}})+\phi(\frac {\rho_{l_{j-1}}}{\rho_{l_j}})+\phi(\frac {\rho_{l_{j+1}}}{\rho_{l_j}})&=\lambda> 0,\; 2\le j \le m-1,\\
\phi(\frac {\rho_{l_{m-1}}}{\rho_{l_m}})&=\lambda>0.
\end{align*}
The increasing property of $\rho$  along any path from $a_{i}$ to the node in $V_B$ yields that 
\begin{align*}
\phi(\frac {\rho_{l_{m-2}}}{\rho_{l_{m-1}}})&=\lambda-\sum_{l\in N(l_j),l\neq l_{m},l_{m-2}}\phi(\frac {\rho_{l}}{\rho_{l_{m-1}}})-\phi(\frac {\rho_{l_{m}}}{\rho_{l_{m-1}}})>\phi(\frac {\rho_{l_{m-1}}}{\rho_{l_m}}).
\end{align*}
The monotonicity of $\phi$ leads to $\frac {\rho_{l_{m-2}}}{\rho_{l_{m-1}}}<\frac {\rho_{l_{m-1}}}{\rho_{l_m}}$. By repeating the above procedures on $a_{l_j}$, $1\le j \le m-2$, we obtain that
\begin{align*}
&\phi(\frac {\rho_{l_{j-1}}}{\rho_{l_j}})+\phi(\frac {\rho_{l_{j}}}{\rho_{l_{j+1}}})+\sum_{l\in N(l_j),l\neq l_{j-1},l_{j+1}}\phi(\frac {\rho_{l}}{\rho_{l_{j}}})+\phi(\frac {\rho_{l_{j+1}}}{\rho_{l_{j}}})=\lambda+\phi(\frac {\rho_{l_{j}}}{\rho_{l_{j+1}}}).
\end{align*}
Notice that $\phi(t)+\phi(1/t)\le 0, t>0$ and that $\phi(\frac {\rho_{l}}{\rho_{l_{j}}})<0$ when $l\neq j-1$. As a consequence, we get that 
\begin{align*}
\phi(\frac {\rho_{l_{j-1}}}{\rho_{l_j}})\ge \lambda+\phi(\frac {\rho_{l_{j}}}{\rho_{l_{j+1}}}),
\end{align*}
which implies that  $\frac {\rho_{l_{j+1}}}{\rho_{l_{j}}}, 0\le j\le m-1$ is decreasing along the path from $a_i$ to any node in $V_B$.
Thus the results holds for the graph with $N$ nodes.

Now, we are going to derive the desired lower bound of the $\rho_{t}$.
Assume that $\kappa\le N-1$ is the numbers of nodes in $V_B$ and that $d_{max}$ is largest distance $d(a_i, a_{l_m})\le N-\kappa+1$ from $a_i$ to  $a_{l_m}$. Since $\sum_{i=1}^N\rho_{i}=1,$ there exists at least a node $a_{n}$ such that the density at $a_{n}>\frac {1-c}{N-1}$.
Then for the path $a_{l_0}a_{l_1}\cdots a_{l_j}\cdots a_{l_m}, a_{l_0}=v_i, a_{l_m}\in V_B,a_{l_j}=a_n$, $m\le d_{max}-1$, we have 
\begin{align*}
\sum_{r=0}^m\rho_{l_{r}}=c(1+\sum_{r=1}^m\frac {\rho_{l_{r}}}{c})
\end{align*}
Adding all the paths, which have $a_i$ as a common node, together, we obtain
\begin{align*}
c(1+\sum_{s=1}^{\kappa}\sum_{r=1}^{m_s}\frac {\rho_{l_{r}^s}}{c})\ge 1
\end{align*}
To find a lower bound of the ratio of $\frac {\rho_{l_{r+1}^s}}{\rho_{l_r^s}}$ for all the paths, we denote $k=\min_{s\le \kappa}\frac {\rho_{l_{m_s}^s}}{\rho_{l_{m_s-1}^s}}$ and let 
$
c(1+\sum_{s=1}^{\kappa}\sum_{r=1}^{m_s}k^{r})<1.
$
It suffices to require that $1+\kappa(d_{max}-1)k^{d_{max-1}}<\frac 1c$, i.e.,
$k\le (\frac {1-c}{c\kappa(d_{max}-1)})^{\frac 1{d_{max}-1}}.$
Thus it holds that $\min_{s\le \kappa}\frac {\rho_{l_{m_s}^s}}{\rho_{l_{m_s-1}^s}}\ge (\frac {1-c}{c\kappa(d_{max}-1)})^{\frac 1{d_{max}-1}}, $ if $c\le \frac 1{\kappa (d_{max}-1)+1}$.

When $c\le \frac 1{\kappa (d_{max}-1)+1}$, we get that for some path which contains the node $a_{l_j}$ whose density is large than $\frac {1-c}{N-1}$,
\begin{align*}
\min_{i\le N}\inf_{\rho_i=c} I(\rho)&\ge
\min_{i\le N}\inf_{\rho_i=c} \sum_{r=0}^{m_s^i-1}(\log(\rho_{l_{r}^i}^*)-\log(\rho_{l_{r+1}^i}^*))(\rho_{l_{r}^i}^*-\rho_{l_{r+1}^i}^*)\\
&\ge \min_{i\le N}\inf_{\rho_i=c} \log(\frac {\rho_{m_s^i}^*}{\rho_{m_{s}^i-1}^*})(\rho_{m_s^i}-c).\\
&\ge {\frac 1{d_{max}-1}} \log(\frac {1-c}{c\kappa(d_{max}-1)})(\frac {1-c}{N-1}-c).
\end{align*}
If there exists $\frac 12\min_i \rho_i(0) N \le \alpha<\min_i \rho_i(0) N, c=\alpha \frac {1}N$ such that $\inf_{\rho_1=c}I(\rho)\ge\frac {M_0}{\beta}$, then  $$\sup_{t\ge0}\min_{i\le N}\rho_i(t)\ge \frac 12\min_i \rho_i(0).$$
Otherwise,
taking ${\frac 1{d_{max}-1}} \log(\frac {1-c}{c\kappa(d_{max}-1)})(\frac {1-c}{N-1}-c)\ge \frac {\mathcal H_0}{\beta}$ and $c=\alpha\min(\frac 1N,\frac 1{(d_{max}-1)\kappa+1})$, where $\alpha<\frac 12N\min_i\rho_{i}(0)$, we obtain the lower bound as 
\begin{align*}
\sup_{t}\min_{i}\rho_{i}(t)\ge \frac {1}{1+\kappa(d_{max}-1)\exp(2\frac {M(d_{max}-1)(N-1)}{\beta})}.
\end{align*}
Combining all cases above, we have the following lower bound estimate 
\begin{align*}
\sup_{t}\min_{i}\rho_{i}(t)\ge 
\min\Big(\frac 12 \min_i(\rho_i(0)),\frac {1}{1+\kappa(d_{max}-1)\exp(2\frac {M(d_{max}-1)(N-1)}{\beta})}\Big).
\end{align*}

\end{proof}

\end{document}